\newcommand{\url}{}
\newtheorem{lemma}{Lemma}
\newtheorem{remark}{Remark}
\newtheorem{proposition}{Proposition}
\newtheorem{definition}{Definition}
\newtheorem{theorem}{Theorem}
\newtheorem{example}{Example}
\newtheorem{corollary}{Corollary}
\newcommand{\cA}{\mathcal{A}}
\newcommand{\cE}{\mathcal{E}}
\newcommand{\cI}{\mathcal{I}}
\newcommand{\EE}{{\mathbf{E}}}
\newcommand{\dE}{\mathbb {E}}
\newcommand{\dZ}{\mathbb {Z}}
\newcommand{\dN}{\mathbb {N}}
\newcommand{\dR}{\mathbb {R}}
\newcommand{\dC}{\mathbb {C}}
\newcommand{\cB}{\mathcal{B}}
\newcommand{\cG}{\mathcal {G}}
\newcommand{\cP}{\mathcal {P}}
\newcommand{\cM}{\mathcal {M}}
\newcommand{\cC}{\mathcal {C}}
\newcommand{\cS}{\mathcal {S}}
\newcommand{\cH}{\mathcal {H}}
\DeclareMathAlphabet{\mathdutchcal}{U}{dutchcal}{m}{n}
\newcommand{\ekk}{\mathdutchcal{k}}
\newcommand{\cJ}{\mathcal{J}}
\newcommand{\ABS}[1]{{{\left| #1 \right|}}} 
\newcommand{\PAR}[1]{{{\left(#1\right)}}} 
\newcommand{\INT}[1]{{{\left[ \hspace{-1pt} \left[ #1 \right] \hspace{-1pt} \right] }}} 
\newcommand{\AND}{\quad \mathrm{and} \quad}
\newcommand{\1}{1\!\!{\sf I}}
\newcommand{\IND}{\1}
\newcommand{\veps}{\varepsilon}
\newcommand{\SC}{\mathbf{s}}
\newcommand{\Wg}{\mathrm{Wg}}
\newcommand{\gauss}{\mathrm{gauss}}
\newcommand{\NN}{\mathfrak{n}}
\newcommand{\MM}{\mathfrak{m}}
\newcommand{\FF}{\mathfrak{f}}
\newcommand{\BB}{\mathfrak{b}}
\newcommand{\PP}{\mathfrak{p}}
\newcommand{\QQ}{\mathfrak{q}}
\newcommand{\ST}{\mathfrak{s}}
\newcommand{\CL}{\circ\BB}
\newcommand{\HU}{{\mathbf{u}}}
\newcommand{\BU}{{+\SC}}
\definecolor{darkred}{rgb}{0.9,0,0.3}
\begin{document}

\title{Freeness for  tensors}
\author{Rémi Bonnin\thanks{Aix-Marseille Univ, CNRS, I2M, Marseille, France. Email: \href{remi.bonnin@ens.psl.eu}{remi.bonnin@ens.psl.eu}}  \; and \; Charles Bordenave\thanks{Aix-Marseille Univ, CNRS, I2M, Marseille, France. Email:  \href{charles.bordenave@univ-amu.fr}{charles.bordenave@univ-amu.fr}}
}

\maketitle


\begin{abstract}
We pursue the current developments in random tensor theory by laying the foundations of a free probability theory for tensors and establish its relevance in the study of random tensors of high dimension. We give a definition of freeness associated to a collection of tensors of possibly different orders. Our definition reduces to the usual freeness when only tensors of order $2$ are concerned.
We define the free cumulants which are associated to this notion of tensor freeness.
We prove that the basic models of random tensors are asymptotically free as the dimension goes to infinity.
On the way, we establish Schwinger-Dyson loop equations associated to random tensors. 
\end{abstract}

\section{Introduction}

Since the fundamental works of Voiculescu \cite{MR1094052,MR1217253}, free probability theory has led to numerous successes notably in operator algebra and random matrix theory, we refer to \cite{MR1775641,MR2760897,zbMATH06684673}. This theory has found many applications in science and engineering, to cite a few, see \cite{MR2884783,https://doi.org/10.1007/s10144-014-0471-0,MR4379548,Couillet_Liao_2022}.   
Tensors  are  versatile algebraic objects which by many aspects are higher order matrices, see the monograph \cite{MR3660696} and references therein. Random tensors play an important and growing role in modern science, notably in quantum field theory \cite{sasakura91,zbMATH06638014,MR4276769,CollinsGurauLionni_1,CollinsGurauLionni_2,gurau2024quantumgravityrandomtensors} and in data analysis and machine learning, see e.g. \cite{NIPS2014_b5488aef,MR4132641,zbMATH07829141,NEURIPS2023_b14d76c7,pandey2024introduction,kunisky2024tensor}. The goal of this work is to pursue the current developments in random tensor theory by laying the foundations of a free probability theory for tensors and establish its relevance in the study of random tensors of high dimension.
The main contributions of this work are the following: 

\begin{enumerate}[-]
\item We give a definition of freeness associated to a collection of tensors of possibly different orders. Our definition reduces to the usual freeness when only tensors of order $2$ are concerned.

\item We define the free cumulants which are associated to this notion of tensor freeness.

\item We prove that the basic models of random tensors are asymptotically free as the dimension goes to infinity.

\item We establish Schwinger-Dyson loop equations associated to random tensors. 

 \end{enumerate}

In the remainder of this introduction, we convey the main notions on tensors underlying our work by using the parallel with matrices (formal definitions are postponed to  Section \ref{sec:1}). We also state our main results. 

\subsection{Distribution of tensors} 

\paragraph{Tensors. } A tensor of order $p \geq 1$ and dimension $N\geq 1$ is commonly defined as an element of the vector space $\cE^N_p = \dC^N \otimes \cdots \otimes \dC^N$, that is the tensor product of $p$ copies of $\dC^N$. The different copies of $\dC^N$ are called the legs of the tensor. By choosing an orthonormal basis of $\dC^N$, a tensor $T$ can be represented by a multi-dimensional array $T = (T_{i_1,\ldots,i_p})$ with $i_t \in \INT{N} = \{1,\ldots,N\}$ for each leg $t \in \INT{p}$.  If $(I,J)$ is a partition of $\INT{p}$, since $(\dC^N)^* = \dC^N$, a tensor defines a linear map from $\dC^J$ to $\dC^I$ whose associated matrix is $T_{I,J} = (T_{i,j})_{i \in \INT{N}^I,j \in \INT{N}^J}$. From this perspective, a central feature of tensors is that they can be contracted along legs: two tensors $T_k \in \cE^N_{p_k}$,  $k=1,2$,  and two subsets of legs $J_k \subset \INT{p_k}$ in bijection of size $|J_1| = |J_2|  = q$ define a new tensor  
$ S  \in \cE^N_{p_1 + p_2 - 2q} $ by considering the matrix product $S = (T_1)_{J^c_1,J_1} (T_2)_{J_2,J_2^c}$ with $J^c_k = \INT{p_k} \backslash J_k$. For example, if $M  = (M_{ij}) \in \cE^N_2$ and $f = (f_i) \in \cE_1^N$, the contraction with respect to the last leg is simply $Mf$, the image of $f$ by the matrix $M$. Similarly, if $M_1,M_2 \in \cE^N_2$, their contraction on the last leg of $M_1$ and first leg of $M_2$ is the usual matrix product $M_1 M_2$ and so on.

\paragraph{Maps of tensors. } These contractions are conveniently represented by symbolic operations on maps where a map is a finite graph  where each vertex has an order among its neighboring edges and with boundary edges or half-edges (we postpone the formal definitions to Section \ref{sec:1}). A tensor $T$ of order $p$ is represented by a map with a single vertex with $p$ boundary edges. 
A contraction of two tensors $T_1$ and $T_2$ is a map with two vertices obtained by forming edges between the contracted legs of the tensors. More generally, let $\MM$ be a map with vertex set $V$, edge set $E$ and with $q\geq 1$ boundary edges say $\partial = (e_1,\ldots,e_q)$. Then if $(T_v)_{v \in V}$ is a collection of tensors where the order of $T_v$ is the degree of $v$ in $\MM$, we can define the tensor in $\cE^N_q$, for $i \in \INT{N}^{\partial}$, 
\begin{equation}\label{eq:trace1}
\MM ( (T_v)_{ v\in V} )_{i_{\partial}} =  \sum_{i \in \INT{N}^{E}} \prod_{v \in V} (T_v)_{i_{\partial v}},
\end{equation}
where $\partial v$ is the sequence of neighboring  edges and boundary edges of $v$. For example, if $\MM$ is a connected map which is a line segment of length $k$ with vertex set $\INT{k}$ and boundary edges attached to vertex $1$ and vertex $k$, then 
$
\MM ( M_1,\ldots,M_k) =  M^{\veps_1}_1 \cdots M_k^{\veps_k},
$
where $M_v^{\veps_v}$ is either $M_v$ or its transpose  $M_v^\intercal$ depending on the order whether of the neighboring edge of $v$. The linear combination of these maps of tensors of possibly different order  encode all possible ways to contract the tensors and can be thought as the extension of the matrix polynomials  in the matrix case.

\paragraph{Trace invariants. } The combinatorial complexity of these maps of tensors is daunting. However, as for matrices, there is a notion of trace. It is defined by using maps $\MM$ without boundary edge which we will call trace maps. Then  if $(T_v)_{v \in V}$ is a collection of tensors where the order of $T_v$ is the degree of $v$ in $\MM$, we can define the scalar by the same formula as in \eqref{eq:trace1}:
\begin{equation}\label{eq:trace2}
\MM( (T_v)_{ v\in V} )  =  \frac{1}{N^\gamma} \sum_{i \in \INT{N}^{E}} \prod_{v \in V} (T_v)_{i_{\partial v}},
\end{equation}
where $\gamma$ is the number of connected components of $\MM$. The normalization is chosen so that if $\MM$ is the trace map with a single vertex and a loop edge, then for $M \in \cE^N_2$, 
$$
\MM( M  ) = \frac 1 N  \mathrm{Tr} (M).
$$
The application $\MM : (T_v)_{v \in V} \to \dC$ is multi-linear and importantly, it is orthogonal  invariant in the following sense. If $T \in \cE^N_p$ and $U \in \cE_2^N$ is an orthogonal matrix, define $T\cdot U^p \in \cE^N_p$ as the contraction of each leg of $T$ by $U$, that is for $j \in \INT{N}^p$
\begin{equation}\label{eq:defTU}
    (T\cdot U^p)_j = \sum_{i \in \INT{N}^p} T_i \prod_{k=1}^p U_{j_k i_k}.
\end{equation}
In other words, we have $T\cdot U^p = \MM ( (T,U,\ldots ,U) ) $ where $\MM$ is an elongated star map, with $T$ in the middle and $U$ on each branch, the second neighboring edge of $U$ being attached to $T$. If $M \in \cE^N_2$, then $M \cdot U^2 = U M U^{\intercal}$. Then, it is straightforward to check that for any orthogonal matrix $U$ and any trace maps,
$$
\MM ( (T_v \cdot U^{p_v} )_{ v\in V} ) = \MM ( (T_v)_{ v\in V} ),
$$
where $p_v$ is the degree of $v$ (and the order of $T_v$). The trace maps $\MM$ form a basis of orthogonal invariant multi-linear application.  We refer to \cite{zbMATH06638014,kunisky2024tensor} for an introduction on these trace invariants (for tensors of even order, it is also possible to define maps which are unitary invariant).

\paragraph{Distribution of tensors. } Exactly as in matrix algebras, we may use the trace invariants $\MM$  to define the distribution of a collection of tensors $\cA  = \{ A_1,\ldots, A_n \}$ of possibly different order, $A_k \in \cE^N_{p_k}$. The distribution of $\cA$ is the collection of all trace maps $\MM ( (T_v)_{v \in V})$ with $T_v \in \cA$ and $\MM$   with compatible degrees.

In this work, we consider a sequence $\cA^N$ of such collection of tensors and we study the limit distribution as the dimension $N $ grows. Our main goal is to characterize such limits when some tensors are random.

\paragraph{Freeness. } It was discovered by Voiculescu  \cite{MR1094052,MR1217253} that the asymptotic distributions of random matrix algebras are captured by the notion of freeness. In Section \ref{sec:1}, we will give a definition of freeness for  the distribution of a finite collection  $(\cA_c)_{c \in \mathcal \cC}$ of tensors where $\cA_{c} = \{A_{c,1}, \ldots, A_{c,n_c}\}$ is a set of tensors of possibly different order. Importantly if the collection  $(\cA_c)_{c \in \mathcal \cC}$ is free then their joint distribution is characterized by the individual distribution of  $(\cA_c)_{c \in \mathcal \cC}$.

The definition of freeness will be given in Section \ref{sec:1}. It relies on the operation on maps which switch two edges while increasing the number of connected components. This defines a poset (partially ordered set) which will play the role of the poset of non-crossing partitions in free probability. We will define the associated free cumulants  in Section \ref{sec:cumul} and notably give an alternative characterization of freeness thanks to the free cumulants.

\subsection{Asymptotic freeness}

We now state our main results pertaining to the asymptotic distribution of random tensors.

\paragraph{Symmetric random tensors. } For a given $p \geq 1$, the symmetric group $\mathrm{S}_p$ acts on $\INT{N}^p$ by permutation of indices: for $i \in \INT{N}^p$ and $\sigma \in \mathrm{S}_p$, $i_\sigma =  (i_{\sigma(1)},\ldots,i_{\sigma(p)})$. For $i,j \in \INT{N}^p$, we say $i \stackrel{p}{\sim} j$ if $i = j_{\sigma} $ for some $\sigma \in \mathrm{S}_p$.

We consider $X= (X_i)_{i \in \INT{N}^p} \in \cE^N_p= (\dC^N)^{\otimes p}$ such that $X_i = X_j$ if $i \stackrel{p}{\sim} j$ and the random variables $(X_i)_{i \in \INT{N}^p / \stackrel{p}{\sim}}$ are  independent, real,
\begin{equation}\label{eq:varX}
 \dE X_i = 0 \quad \AND \quad \dE X_i^2 = \frac{p}{\cP_i},
\end{equation}
where $\cP_i$ is the number of elements in the equivalence class of $i$. Remark that we can equivalently write $\dE X_i^2 = \frac{1}{(p-1)!}\prod_{j=1}^N c_j(i)!,$
where $c_j(i)$ is the number of occurrences of $j$ in $i$. 
We also assume that for each integer $N$, the law of $X_i$ depends only on the equivalence class of $i \in \INT{N}^p$ with respect to the action of $\mathrm{S}_N$. That is $i \underset{N}{\sim} j$ if there exists $\sigma \in \mathrm{S}_N$ such that $(\sigma(i_1),\ldots,\sigma(i_p)) = j$. The law of $X_i$ may depend on $N$.

The main example is the Gaussian Orthogonal Tensor Ensemble (GOTE) where $X$ is Gaussian. In general, we will assume that the moments of $X_i$ are bounded. More precisely,
\begin{enumerate}
    \item [(X1)] For all integers $k \geq 2$, there exists a constant $c(k) > 0$ such that for all integers $N\geq 1$ and $i \in \INT{N}^p$:\begin{equation*}\label{eq:Xkbd} \dE |X_i|^k  \leq c(k). \end{equation*}
\end{enumerate}

We define the normalized symmetric random tensor as 
$$
W^N  = \frac{X}{N^{\frac{p-1}{2}}}.
$$
(Note that for vectors, $p=1$, there is no scaling). The random tensor $W^N$ is the tensor analog of the real Wigner matrices.

Remark that when $p$ is even, we can also define Hermitian complex-valued tensors. Indeed, for $p = 2l$ even, we can identify an element of $\cE^N_p = (\dC^N)^{\otimes p}$ as linear map from $\dC^l$ to $\dC^l$. We may thus distinguish inputs and outputs between the legs of an element in $\cE^N_p$. For simplicity, we will however restrict ourselves to the real case in these notes.

For $p = 1$, the law of large numbers asserts the distribution of $W^N$ converges a.s. (the only   connected map without boundary edges to be considered is the map with $2$ vertices of degree $1$). For $p=2$, the convergence in distribution is the content of Wigner semi-circular Theorem, see the monographs \cite{MR2567175,MR2760897,zbMATH06684673}. For $p \geq  3$, the convergence is due to Gurau \cite{zbMATH06638014,gurau2020generalization} in the real Gaussian case and Bonnin \cite{bonnin2024universality} in the general real case. The limit distribution will be given in Subsection \ref{subsec:asymptfree}.

 We will consider two basic types of random elements and inquire about their asymptotic freeness with respect to $\cA^N_0$.

\paragraph{Assumptions on $(A_i^N)_{i \in I}$. } We consider a finite and deterministic collection $\cA^N_0 = (A^N_i)_{i \in I}$ of elements in $\cE^N$ such that for all $N$, $i$, we have $A^N_i \in \cE^N_{\ell(i)}$. We also assume that $\cA^N_0$ is stable by taking entry-wise complex conjugation: for all $i \in I$, $\bar A_i^N \in \cA^N_0$. We will consider two types of assumptions for the collection  $\cA^N_0 = (A^N_i)_{i \in I}$.  We state both assumptions  here and we will rephrase them in more formal terms in Section \ref{sec:SD} when the proper formalism will be introduced.

\begin{enumerate}
    \item[(A1)] For all trace maps $\MM$, all $(T^N_v)_{v \in V}$ where $T^N_v \in \cA_0^N$ is a tensor of order the degree of $v$, there exists a constant $C(\MM)$ such that for all $N \geq 1$
\begin{equation*}
\ABS{\MM ((T^N_v)_{v\in V})} \leq C(\MM),
\end{equation*}
\end{enumerate}

The second assumption is the same as assumption (A1) except that we consider hyper-maps instead of maps. Loosely speaking, an hyper-map is a map where edges can connect more than two vertices. Definition of trace maps in \eqref{eq:trace2} extends verbatim to trace hyper-maps.

\begin{enumerate}
    \item[(A2)] For all trace hyper-maps $\MM$, all $(T^N_v)_{v \in V}$ where $T^N_v \in \cA_0^N$ is a tensor of order the degree of $v$, there exists a constant $C(\MM)$ such that for all $N \geq 1$
\begin{equation*}
\ABS{\MM ((T^N_v)_{v\in V})} \leq C(\MM),
\end{equation*}
\end{enumerate}

Alternatively, as pointed out by a referee, assumption (A2) can be formulated in terms of trace maps (no hypermaps) as in (A1) but where $T^N_v \in \cA_0^N \sqcup \mathcal \{ \sum_i e_i \otimes \cdots \otimes e_i   \in \cE^N_q: q \geq 3 \}$.

\paragraph{Asymptotic freeness for Wigner tensors. } 
We have the following asymptotic freeness result which extends a classical result for random matrices unveiled by Voiculescu, see \cite{MR2760897,zbMATH06684673}. On our way, we will give another proof of the convergence of the distribution of $W^N$ for $p \geq 3$ in the Gaussian case.

\begin{theorem}\label{th:free1}
If (A1) holds, in the real Gaussian case, the families $\cA^N_0$ and $\{ W^N\}$ are asymptotically free in probability.
\end{theorem}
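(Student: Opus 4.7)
The plan is to verify the freeness characterization of Section~\ref{sec:1}, expressed through the free cumulants of Section~\ref{sec:cumul}, at the level of limits of expected trace invariants $\dE\MM(\ldots)$, and then upgrade convergence in expectation to convergence in probability by a variance bound.

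\emph{Gaussian Wick expansion.} Fix a trace map $\MM$ with vertex set $V = V_0 \sqcup V_W$, the $V_0$ vertices decorated by tensors in $\cA_0^N$ and the $V_W$ vertices decorated by $W^N$. Since $X$ is centered Gaussian with covariance
\[
\dE X_i X_j = \frac{p}{\cP_i}\IND(i \stackrel{p}{\sim} j) = \frac{1}{(p-1)!} \sum_{\sigma \in \mathrm{S}_p} \IND(i = j_\sigma),
\]
the Isserlis--Wick formula expresses $\dE\MM(\ldots)$ as a sum over pair partitions $\pi$ of $V_W$ (non-trivial only when $|V_W|$ is even) and, for each pair $\{v,v'\}\in\pi$, over permutations $\sigma_{\{v,v'\}}\in\mathrm{S}_p$ matching the $p$ legs of $v$ to those of $v'$. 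Absorbing the normalization $W^N=X/N^{(p-1)/2}$, each choice $(\pi,(\sigma_e))$ produces a new deterministic trace map $\MM_{\pi,\sigma}$ whose vertex set is $V_0$: each paired pair of $W^N$-vertices is deleted and replaced by the $p$ new edges prescribed by $\sigma_e$.

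\emph{Topological power counting.} Writing the index sum as in \eqref{eq:trace2}, the contribution of $(\pi,\sigma)$ equals $\MM_{\pi,\sigma}((T_v)_{v\in V_0})$ multiplied by a factor of the form $N^{-(p-1)|V_W|/2 + e(\pi,\sigma) - \gamma(\pi,\sigma)+\gamma}$, where $e(\pi,\sigma)$ counts the free indices created by the gluing and $\gamma(\pi,\sigma)$ the number of connected components of $\MM_{\pi,\sigma}$. Adapting the colored/stranded graph analysis of Gurau (\cite{zbMATH06638014,gurau2020generalization}) and Bonnin~\cite{bonnin2024universality}, the exponent of $N$ is at most $0$, with equality precisely when the pairing $(\pi,\sigma)$ realises a maximal chain in the edge-switching poset of maps defined in Section~\ref{sec:1}. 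These extremal pairings are the tensor analogues of non-crossing pair partitions; assumption (A1) bounds the remaining $\MM_{\pi,\sigma}$ uniformly in $N$ so that all subleading terms vanish as $N\to\infty$.

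\emph{Cumulant identification and concentration.} By the characterization of freeness via free cumulants from Section~\ref{sec:cumul}, asymptotic freeness follows once the leading contributions are shown to reproduce the cumulant expansion of a free collection. Extremal pairings necessarily match $W^N$-vertices only with $W^N$-vertices, so all mixed cumulants between $\cA_0^N$ and $\{W^N\}$ vanish in the limit; the surviving pure $W^N$ cumulants come from length-two chains (pair cumulants) and their values are dictated by the covariance \eqref{eq:varX}, exactly as the semicircle arises as the free cumulant signature of a Wigner matrix. To upgrade to convergence in probability, a parallel Wick expansion of $\dE(\MM(\ldots)^2)-(\dE\MM(\ldots))^2$ retains only pairings linking a $W^N$-vertex of one copy of $\MM$ to one of the other copy. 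Each such linking pair merges components and therefore gains at least one extra factor of $1/N$ in the power counting above, yielding $\VAR(\MM(\ldots))=O(1/N)$ or better.

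\emph{Main obstacle.} The crux is the topological power counting combined with the bijective identification of the $N$-maximal Wick pairings with the top of the edge-switching poset of maps of Section~\ref{sec:1}, and the matching of these extremal $(\pi,\sigma)$ with the combinatorial objects indexing the free cumulants. For $p=2$ this is the classical genus expansion producing non-crossing pairings, but for $p\geq 3$ the stranded-graph combinatorics is genuinely higher-dimensional and it is here that the tensor-specific content of the theorem actually resides.
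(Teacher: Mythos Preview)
Your approach via a global Wick expansion is genuinely different from the paper's, which does \emph{not} expand everything at once. The paper instead exploits that the real Gaussian tensor satisfies $W^N \stackrel{d}{=} W^N\cdot U_N^p$ for Haar orthogonal $U_N$, so that the Schwinger--Dyson equation for orthogonally invariant families (Proposition~\ref{prop:SD4}) applies with $\cA_1^N=\{W^N\}$. That equation peels off one chromatic edge at a time and rewrites $\dE_N[\MM]$ as a signed sum of $\dE_N[\MM_{e,f}^\pm]$ over \emph{chromatic} switches that disconnect, plus $O(1/N)$. Iterating this together with the variance bound (Proposition~\ref{prop:var2}) lands exactly on the maps $\widehat\MM$ of (P1)--(P2), so the freeness definition is verified directly, with no detour through cumulants.

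Your sketch has a real gap at the point you yourself flag as the ``Main obstacle''. First, you invoke the cumulant characterization of freeness (Theorem~\ref{th:freekappa}), but in the paper this is proved only for \emph{even} families, and the paper explicitly says Section~\ref{sec:cumul} is not used for the main theorems; for odd $p$ you have no such tool available. Second, and more seriously, the assertion that the $N$-maximal Wick pairings $(\pi,\sigma)$ are exactly those realising ``a maximal chain in the edge-switching poset'' is not what Definition~\ref{def:freeness} asks for: the $\widehat\MM$ there are reached by \emph{chromatic} disconnecting switches subject to (P1)--(P2), not by arbitrary switches, and the Wick gluing $\MM_{\pi,\sigma}$ lives on $V_0$ only, so there is no direct bijection with the $\widehat\MM$'s of the definition. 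Turning your power count into the statement ``$\dE_N[\MM]\to 0$ whenever every $\widehat\MM$ has a minimal non-monochromatic or a centered monochromatic component'' requires precisely the recursive chromatic structure that Proposition~\ref{prop:SD4} supplies and that a one-shot Wick expansion does not. Your variance step, on the other hand, is in the right spirit and matches Proposition~\ref{prop:var1}/\ref{prop:var2}.
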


Informally, asymptotic freeness means that the trace maps in the collection $\cA^N_0$ and $\{ W^N\}$ are asymptotically equivalent to the trace maps computed by assuming that $\cA^N_0$ and $\{ W^N\}$ are free. The proof will rely on the Schwinger-Dyson loop equation that is satisfied by random tensors which have been often used in the matrix case, we refer notably to \cite{VOICULESCU1999101,MR2760897}.  The Schwinger-Dyson loop equations have emerged as a central piece of modern free probability theory.  In our setting, these equations will characterize the limit distributions of $\{\cA^N_0,W^N\}$. 

In general, we believe that Theorem \ref{th:free1} is true also in the non-Gaussian case under an appropriate moment assumption on $X_i$. Our best claim is this direction is the following.

\begin{theorem}\label{th:free1b}
If (X1) and (A2) hold, the families $\cA^N_0$ and $\{ W^N\}$ are asymptotically free in probability.
\end{theorem}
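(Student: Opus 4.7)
The plan is to reduce Theorem~\ref{th:free1b} to the Gaussian case of Theorem~\ref{th:free1} via a cumulant expansion of the entries of $X$, using assumption (A2) to absorb the non-Gaussian corrections. A standard second-moment/variance computation (itself based on (X1) and (A2)) shows that $\MM((T_v)_v)$ concentrates around its expectation for every trace map $\MM$ whose vertices are labelled by $\cA_0^N \cup \{W^N\}$, so it suffices to prove that $\EE[\MM((T_v)_v)]$ converges to the value predicted by the freeness of $\cA_0^N$ and $W^N$. Writing $W^N = X/N^{(p-1)/2}$ and developing the product, one applies the classical joint cumulant expansion
$$
\EE\!\left[\prod_{k} X_{i_k}\right] \;=\; \sum_{\pi} \prod_{B \in \pi} \kappa_B\!\left((X_{i_k})_{k \in B}\right),
$$
in which $\pi$ ranges over the partitions of the set of $W^N$-vertices of $\MM$ and, by (X1), each finite-order joint cumulant $\kappa_B$ is uniformly bounded.

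The contribution of the pair partitions ($|B|=2$ for every $B \in \pi$) is exactly the Wick expansion used in the proof of Theorem~\ref{th:free1}, and under (A1) (hence under (A2)) it already yields the asymptotic free joint distribution of $\cA_0^N$ and $W^N$. It therefore remains to prove that the non-Gaussian remainder, i.e.\ the sum over partitions containing at least one block $B$ of size $q \geq 3$, vanishes as $N \to \infty$.

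For such a block $B$, the $\mathrm{S}_p$-symmetry of $X$ forces $\kappa_B$ to vanish unless the $p$-tuples of indices attached to the $q$ vertices of $B$ all belong to the same $\stackrel{p}{\sim}$-orbit. This identification glues these $q$ vertices along a common multi-index, replacing them inside $\MM$ by a single hyper-edge of size $q$ weighted by a bounded cumulant factor. What remains is a trace hyper-map, whose hyper-edges are indexed by the blocks of $\pi$, evaluated on the tensors of $\cA_0^N$; assumption (A2) bounds its absolute value uniformly in $N$. The normalization contributes $N^{-(p-1)q/2}$ per block of size $q$, while the combinatorial/topological counting argument at the heart of \cite{zbMATH06638014,bonnin2024universality} shows that the number of surviving free summation indices grows strictly slower than $N^{(p-1)q/2}$ as soon as $q \geq 3$. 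Each such contribution is therefore $O(N^{-\delta})$ for some $\delta>0$, and summing over the finitely many partitions completes the argument.

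The main obstacle is precisely this last counting step in the presence of the deterministic collection $\cA_0^N$. Without $\cA_0^N$, strictly negative $N$-power for every cumulant block of size $\geq 3$ is the essence of the universality argument of \cite{bonnin2024universality}; here one must verify that the deterministic insertions never restore the deficit, which is exactly why the hyper-map assumption (A2) is needed rather than merely (A1): the hyper-edges produced by higher cumulants must be absorbed by a uniform bound on the deterministic side, and only (A2) provides it. Once this estimate is in place, the reduction to the Gaussian case from Theorem~\ref{th:free1} yields the claim.
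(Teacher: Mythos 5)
Your proposal is correct and follows essentially the same route as the paper: comparison with the Gaussian case via a partition expansion of the moments of the $X$-entries, with blocks of size $\geq 3$ turned into hyper-edges whose trace hyper-maps are controlled by (A2), and a counting argument showing that only pair partitions survive at leading order (and that these reproduce exactly the Gaussian expectation, to which Theorem \ref{th:free1} applies). The one substantive detail the paper adds, which your sketch glosses over, is a further Moebius inversion over the coincidence patterns of the summation indices: the moment/cumulant weights (e.g.\ $p/\cP_i$ and the $\underset{N}{\sim}$-dependence of the law of $X_i$) are not constant in $i$, so each stratum must be rewritten as an honest trace hyper-map before (A2) can be invoked.
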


The proof of Theorem \ref{th:free1b} is by comparison with the Gaussian case. We will show that the distribution of a trace map depends at first order only on the first two moments of the Wigner random tensor.

As a consequence of Theorem \ref{th:free1} and Theorem \ref{th:free1b} and independence, we obtain the asymptotic freeness in probability  of independent Wigner tensors of possibly different order. Indeed, Assumptions  (A1) and (A2) hold in probability for Wigner tensors with bounded moments (see proofs in Section \ref{sec:1} or \cite{bonnin2024universality}).

\begin{corollary}
Let $n \geq 1$ be an integer and  $(W^N_1,\ldots,W^N_n)$ be independent Wigner tensors of possibly different orders such that (X1) holds for each $n$. The tensors $(W^N_1,\ldots,W^N_n)$ are asymptotically free in probability.
\end{corollary}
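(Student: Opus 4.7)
The plan is to proceed by induction on $n$. The base case $n=1$ reduces to convergence in probability of the distribution of a single Wigner tensor, already established by Gurau \cite{zbMATH06638014,gurau2020generalization} in the Gaussian case and by Bonnin \cite{bonnin2024universality} in the general setting; asymptotic freeness for a single family is just the statement that its distribution converges.

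For the inductive step, assume $(W^N_1,\ldots,W^N_{n-1})$ are asymptotically free in probability and we add the last tensor $W^N_n$. The goal is to apply Theorem \ref{th:free1b} with the collection $\cA^N_0 := \{W^N_1,\ldots,W^N_{n-1}\}$ playing the role of the deterministic family and $W^N := W^N_n$. The tensor $W^N_n$ is independent of $\cA^N_0$ and satisfies (X1). The key input, invoked in the remark preceding the corollary, is that (A2) holds in probability for any finite family of Wigner tensors with bounded moments: every trace hyper-map $\MM((T^N_v)_{v\in V})$ with $T^N_v \in \cA^N_0$ is bounded in probability, as proved in Section \ref{sec:1} (see also \cite{bonnin2024universality}). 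Furthermore, by the induction hypothesis every such trace hyper-map converges in probability to a deterministic limit.

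The main obstacle is that Theorem \ref{th:free1b} is stated for a deterministic collection $\cA^N_0$, whereas ours is random. To bridge this, we condition on the $\sigma$-algebra $\cF^N$ generated by $(W^N_1,\ldots,W^N_{n-1})$. The family of trace hyper-maps to be considered is countable, so by diagonal extraction we can pass to a subsequence along which every trace hyper-map on $\cA^N_0$ converges almost surely and the bounds in (A2) hold almost surely. On the probability-one event where these properties are satisfied, the conditional law of $W^N_n$ given $\cF^N$ remains that of a Wigner tensor satisfying (X1) with the same uniform moment bounds, so Theorem \ref{th:free1b} applies conditionally and gives that $\cA^N_0$ and $\{W^N_n\}$ are asymptotically free in probability along the subsequence.

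Combining the conditional asymptotic freeness of $\cA^N_0$ and $\{W^N_n\}$ with the inductive description of the limit distribution of $\cA^N_0$, the characterization of freeness through alternating centered mixed moments yields that the joint distribution of $(W^N_1,\ldots,W^N_n)$ converges in probability along the subsequence to the free product law. Since every subsequence admits a further subsequence with the same deterministic limit, convergence holds along the whole sequence, which concludes the induction. The delicate step is the third one: one must check that the moment hypotheses on $W^N_n$ transfer under conditioning on $\cF^N$, which is immediate from independence, and that the a.s. events on which Theorem \ref{th:free1b} is being applied are well-defined uniformly over all trace hyper-maps, which the diagonal extraction guarantees.
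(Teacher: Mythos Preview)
Your argument follows the same route the paper indicates: the corollary is stated there without a detailed proof, as a direct consequence of Theorem~\ref{th:free1b} and independence, once one knows that (A1)--(A2) hold in probability for families of Wigner tensors. Your induction with conditioning on $\cF^N$ and diagonal subsequence extraction is a correct way to make this rigorous, and is essentially what the paper's one-sentence justification has in mind.

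One point needs correcting. You write that ``by the induction hypothesis every such trace hyper-map converges in probability to a deterministic limit.'' The induction hypothesis is asymptotic \emph{freeness} of $(W^N_1,\ldots,W^N_{n-1})$, a statement about trace \emph{maps} only; it does not control the larger class of trace hyper-maps that (A2) involves. Convergence (or boundedness) in probability of hyper-maps on independent Wigner tensors is a separate input---precisely what the paper asserts in the sentence preceding the corollary and attributes to \cite{bonnin2024universality} and Section~\ref{sec:SD}---and is not a consequence of the freeness already established. Your subsequence argument genuinely needs convergence in probability here (tightness alone does not yield almost-surely bounded realizations along subsequences), so invoke it from the correct source rather than from the inductive hypothesis. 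A minor aside: for $n=1$ there are no chromatic switches and no non-monochromatic components, so asymptotic freeness of a single family is vacuous rather than equivalent to convergence in distribution.
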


Theorem \ref{th:free1} and Theorem \ref{th:free1b} apply notably to the case where $\cA^N_0$ is a finite collection of vectors. This allows to retrieve in principle \cite[Theorem 2]{JMLR:v23:21-1038} and the main result of \cite{MR4650897}.

\paragraph{Haar unitary and Haar orthogonal matrices. } We take $U^N$ to be Haar distributed on the unitary group $\mathrm{U}(N)$ or the orthogonal group $\mathrm{O}(N)$. The convergence in distribution of $(U_N,U_N^*)$ is due to Voiculescu \cite{MR1601878}.   We have the following asymptotic freeness result which extends a classical result for matrices, see \cite{MR1601878,MR2760897,zbMATH06684673}. 

\begin{theorem}\label{th:free2}
If (A1) holds, the families $\cA^N_0$ and $\{ U_N,U_N^*\}$ are asymptotically free in probability.
\end{theorem}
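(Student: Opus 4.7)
The plan is to establish, for every trace map $\MM$ whose vertices carry tensors drawn from $\cA^N_0 \cup \{U_N, U_N^*\}$, that $\MM((T_v)_{v \in V})$ converges in probability to the value predicted by freeness of the two families. Since (A1) gives uniform control of trace invariants built from $\cA^N_0$, it is enough to prove two statements: (i) $\dE\, \MM((T_v)_{v\in V})$ converges to the free-product value, and (ii) $\VAR(\MM((T_v)_{v\in V})) \to 0$. Both will follow from a single Weingarten expansion.

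Each $U_N$ and $U_N^*$ appearing in $\MM$ sits at a degree-$2$ vertex. Expose the $2k$ index pairs $(i_a,j_a)$ at these vertices, freeze the indices attached to the $\cA^N_0$ vertices, and integrate out $U_N$ using the Weingarten formula of Collins--\'Sniady. In the orthogonal case this produces a sum over pairs $(\pi_1,\pi_2)$ of pair partitions of $\{1,\dots,2k\}$ weighted by $\Wg_O(\pi_1 \pi_2^{-1},N)$; in the unitary case the sum is over $(\sigma,\tau)\in S_k\times S_k$ weighted by $\Wg_U(\sigma \tau^{-1},N)$. Each such choice identifies the legs formerly attached to the $U$-vertices and thereby yields a reduced trace map $\MM_{\pi_1,\pi_2}$ supported only on the $\cA^N_0$ vertices $V_0$. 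Schematically
$$
\dE\,\MM((T_v)_{v\in V}) \;=\; \sum_{\pi_1,\pi_2} \Wg(\pi_1\pi_2^{-1},N)\, N^{\alpha(\pi_1,\pi_2)}\, \MM_{\pi_1,\pi_2}((T_v)_{v\in V_0}),
$$
where $\alpha(\pi_1,\pi_2)$ records the normalization \eqref{eq:trace2} together with the number of free index sums created by the identifications.

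Next, insert the standard large-$N$ expansion $\Wg(\sigma,N) = N^{-k - |\sigma|}\mu(\sigma)(1 + O(N^{-2}))$, with $|\sigma|$ the minimal-factorization length and $\mu$ the M\"obius function of the non-crossing (pair-)partition lattice. The total $N$-exponent of each term equals, up to a sign, the "excess" of the reduced map $\MM_{\pi_1,\pi_2}$ with respect to the poset of admissible refinements of $\MM$ introduced in Section \ref{sec:1} (the one generated by the edge-switching operation that increases the number of connected components). The exponent vanishes precisely on those $(\pi_1,\pi_2)$ whose identifications place $\MM_{\pi_1,\pi_2}$ at the top of this poset; summing these leading terms produces, by the Möbius inversion inherent to the Weingarten asymptotics, the value of $\MM$ computed under the hypothesis that $\cA^N_0$ and $\{U_N,U_N^*\}$ are free. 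The subleading terms are bounded by (A1), uniformly in $N$, and contribute $O(N^{-2})$.

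The crux of the argument, and the point demanding the most work, is the combinatorial dictionary between the leading Weingarten pairings and the free-product value read off from the admissible poset and free cumulants of Section \ref{sec:cumul}. In the classical matrix case this is the familiar bijection between non-crossing pair partitions relative to a cyclic trace and the free cumulant expansion of Voiculescu; for a general trace map the "trace" is replaced by the richer combinatorics of a map, so one must track, for every optimal $(\pi_1,\pi_2)$, how the induced edge identifications lift to an admissible refinement of $\MM$ and verify that the resulting weights reproduce the free cumulant formula. Once this identification is established, statement (i) is complete, and (ii) follows by applying the same expansion to $\dE\,|\MM|^2$ viewed as a trace map on the disjoint union $\MM \sqcup \overline\MM$: Weingarten pairings that do not cross between the two copies contribute exactly $|\dE\,\MM|^2$, whereas any cross-copy pairing strictly reduces the number of independent sums and therefore the $N$-exponent, yielding $\VAR(\MM) = O(N^{-c})$ for some $c>0$ and hence convergence in probability.
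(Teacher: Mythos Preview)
Your approach is genuinely different from the paper's. You propose a direct Weingarten expansion and identification of the leading terms, whereas the paper proceeds via a Schwinger--Dyson recursion: starting from the invariance $U_N \mapsto U_N e^{tB}$, one obtains Lemma~\ref{le:Haarinv} and then Corollary~\ref{cor:SD2}, which expresses $\dE_N[\MM]$ as a signed sum of $\dE_N[\MM']$ over maps $\MM'<\MM$ obtained by a single \emph{chromatic switch}, up to $O(1/N)$. Iterating this and using the factorisation of Proposition~\ref{prop:var2} yields $\dE_N[\MM]=\sum t(\widehat\MM)\,\dE_N[\widehat\MM]+O(1/N)$, the sum running exactly over the $\widehat\MM$ satisfying (P1)--(P2) in Definition~\ref{def:freeness}. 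The paper's route therefore builds the connection to the freeness poset directly into the recursion; Weingarten calculus is used only peripherally, to prove the boundedness Lemma~\ref{le:Ubounded}.

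Your outline is plausible in principle, but the step you yourself flag as ``the point demanding the most work'' is precisely the step that is not carried out, and in this setting it is not a routine adaptation. The freeness definition here is new: it is phrased via chromatic switches on the poset $\cM_\pi$, not via non-crossing partitions, and maps may have vertices of arbitrary degree. You would need to show that the maximal-exponent Weingarten pairings $(\pi_1,\pi_2)$ are in bijection with the $\widehat\MM$ of (P1)--(P2), and that the M\"obius weights $\mu(\pi_1\pi_2^{-1})$ combine to the integers $t(\widehat\MM)$ produced by the chromatic-switch recursion. Neither statement is established in your text; asserting that ``M\"obius inversion inherent to the Weingarten asymptotics'' reproduces the free-product value is exactly what has to be proved. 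A secondary issue: in the orthogonal case the subleading Weingarten corrections are $O(1/N)$, not $O(N^{-2})$ as you write, so your error bounds should be adjusted accordingly (this does not affect the conclusion). Your variance argument via $\MM\sqcup\overline\MM$ is sound in spirit and parallels the paper's Proposition~\ref{prop:var2}.
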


Again, the proof will rely on the Schwinger-Dyson equation that is satisfied by a random unitary matrix.  Theorem \ref{th:free2} has an interesting corollary for unitary invariant random tensors. More precisely, recall the definition $T \cdot U^p$ for $T \in \cE^N_p$ and $U \in \cE^N_2$ in \eqref{eq:defTU}. If $\cA$ is a subset of tensors of possibly different orders, we denote by $\cA \cdot U^{\#}$ the subset of  tensors of the form for some $p$ and some $A \in \cA \cap \cE^N_p$, $A\cdot U^p$.

\begin{theorem}\label{th:free3}
Let $\cA^N_1$ and $\cA^N_2$ be two finite families  of tensors such that $\cA^N_0 = \cA^N_1 \cup \cA^N_2 \cup \bar \cA^N_1 \cup \bar \cA^N_2$ satisfies (A1). The families $\cA^N_1$ and $\cA^N_2 \cdot U_N^{\#}$  are asymptotically free in probability.
\end{theorem}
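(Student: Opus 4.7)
The plan is to deduce Theorem \ref{th:free3} from Theorem \ref{th:free2} by realising the tensor action $T \mapsto T\cdot U^p$ as a specific contraction pattern inside trace maps. Set $\cA^N_0 := \cA^N_1 \cup \cA^N_2$. Because both $\cA^N_1$ and $\cA^N_2$ satisfy (A1), so does the union: any trace map on $\cA^N_0$ is a trace map whose vertex labels come from a finite family of bounded tensors, so the constant $C(\MM)$ may simply be taken as the maximum over the choices in $\cA^N_1 \cup \cA^N_2$. Applying Theorem \ref{th:free2} to $\cA^N_0$ therefore yields that $\cA^N_0$ and $\{U_N,U_N^{\intercal}\}$ are asymptotically free in probability.

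Next I would translate trace maps on $\cA^N_1 \cup (\cA^N_2 \cdot U_N^{\#})$ into trace maps on $\cA^N_1 \cup \cA^N_2 \cup \{U_N,U_N^{\intercal}\}$. Concretely, for every vertex $v$ of a trace map $\MM$ labelled by some $A \cdot U_N^p \in \cA^N_2 \cdot U_N^{\#}$ of order $p$, I would blow up $v$ into a star whose centre is labelled by $A \in \cA^N_2$ and whose $p$ leaves are labelled by $U_N$; each leaf has one leg attached to $A$ and the other attached to the edge previously incident to $v$. Using the definition \eqref{eq:defTU} and unfolding the sum in \eqref{eq:trace2}, this produces a new trace hyper-structure $\widetilde \MM$ on $\cA^N_0 \cup \{U_N,U_N^{\intercal}\}$ satisfying
\begin{equation*}
\MM\bigl((T_v)_{v\in V}\bigr) \;=\; \widetilde \MM\bigl((\widetilde T_w)_{w\in \widetilde V}\bigr).
\end{equation*}
The orientation of each inserted $U_N$ is dictated by the cyclic order around $v$, with $U_N^{\intercal}$ appearing whenever the order is reversed; this step is a bookkeeping exercise.

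The third step is the combinatorial identification. Theorem \ref{th:free2} provides the limit of $\widetilde \MM$ as the value prescribed by freeness of $\cA^N_0$ from $\{U_N,U_N^{\intercal}\}$, and I would check that this value coincides with the one prescribed by freeness of $\cA^N_1$ from $\cA^N_2 \cdot U_N^{\#}$. The cleanest route should be through the free cumulants of Section \ref{sec:cumul}: I would verify that any mixed cumulant with entries alternating between $\cA^N_1$ and $\cA^N_2 \cdot U_N^{\#}$ expands, after the star-insertion described above, into a sum of cumulants involving $U_N$-entries alongside $\cA^N_0$-entries, and each such term vanishes in the limit by the freeness already established. The orthogonal invariance of the Haar measure is exactly what ensures that the expanded trace maps realise the poset operation (switching two edges while increasing the number of connected components) underlying the tensor notion of freeness.

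The main obstacle is the combinatorial identification in the last step, namely matching the Möbius-type relations on the tensor poset on the $\cA^N_1 \cup (\cA^N_2 \cdot U_N^{\#})$ side with those on the enlarged $\cA^N_0 \cup \{U_N,U_N^{\intercal}\}$ side. The work of this matching is essentially to show that the Weingarten/Haar integration over $U_N$ (which drives the proof of Theorem \ref{th:free2}) produces exactly the non-crossing structure that the tensor freeness of $\cA^N_1$ and $\cA^N_2 \cdot U_N^{\#}$ demands; everything else is either an instance of Theorem \ref{th:free2} or follows from multilinearity of trace maps.
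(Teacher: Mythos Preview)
Your expansion step---replacing each vertex labelled $A\cdot U_N^{p}$ by a star with centre $A$ and $U_N$ on each leg, then using $U_N^{\intercal}U_N=I_N$ to cancel the pairs on edges joining two $\cA^N_2$-vertices---is exactly what the paper does; see the proof of Proposition~\ref{prop:SD4}. But the paper does \emph{not} invoke Theorem~\ref{th:free2} as a black box and then attempt a poset translation. Instead, it applies Corollary~\ref{cor:SD2} to the expanded map $\tilde\MM$ and reads off directly a Schwinger--Dyson equation (Proposition~\ref{prop:SD4}) already phrased in terms of chromatic switches for the two-colouring $(\cA^N_1,\cA^N_2\cdot U_N^{\#})$. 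With that in hand, the proof of Theorem~\ref{th:free3} is one line: repeat verbatim the recursion in the proof of Theorem~\ref{th:free2}, with Corollary~\ref{cor:SD2} replaced by Proposition~\ref{prop:SD4}. No identification between two posets is ever needed.

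Your route, by contrast, leaves a genuine gap at the step you yourself flag. Asymptotic freeness of $\cA^N_0=\cA^N_1\cup\cA^N_2$ from $\{U_N,U_N^{\intercal}\}$ controls the $\widehat{\tilde\MM}$'s for the colouring $(\cA^N_0,\{U_N\})$; but those $\widehat{\tilde\MM}$'s have $\cA^N_0$-monochromatic components that can mix $\cA^N_1$ and $\cA^N_2$ vertices, which is precisely what you must rule out for the target colouring. Showing that this mixing cannot survive---that after collapsing the inserted $U_N$'s one lands on a $\widehat\MM$ for $(\cA^N_1,\cA^N_2\cdot U_N^{\#})$---is a real combinatorial claim, not bookkeeping, and you have not supplied it. Your proposed shortcut through Theorem~\ref{th:freekappa} is also blocked: that characterisation is stated only for even families, whereas Theorem~\ref{th:free3} allows tensors of arbitrary order. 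The paper's approach sidesteps both issues because the chromatic switches in Proposition~\ref{prop:SD4} are, by construction, already the ones relevant to the $(\cA^N_1,\cA^N_2\cdot U_N^{\#})$ freeness.
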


\subsection{Organisation of the paper} 
In Section \ref{sec:1}, we will introduce the main definitions, notably maps and freeness. We will also prove the first basic results related to our formalism. In Section \ref{sec:cumul}, we introduce the free cumulants and prove that they characterize freeness. We will also establish a central limit theorem for sums of free tensors. In Section \ref{sec:SD}, we prove the results of asymptotic freeness stated in this introduction. The proof goes by establishing Schwinger-Dyson loop equations adapted to our formalism.

\subsection{Acknowledgements}
The authors thank the two anonymous referees for their detailed and constructive reports. Their comments led us to clarify and improve the presentation of the paper.

\section{Freeness for graphical actions}
\label{sec:1}

\subsection{Combinatorial maps}

For integer $n \geq 1$, we set $\INT{n} = \{1,\ldots,n\}$. 

A {\em combinatorial map} is a finite graph $\MM$ equipped with an order of edges attached to each vertex. More precisely,  for $m$ even integer, a combinatorial map with $m/2$ edges  and $n$ vertices has vertex set $V(\MM) = \INT{n}$ which is encoded by a pair $\MM = (\pi,\alpha)$ of permutations in $S_{m}$. The set $\vec E (\MM) = \INT{m}$ are the directed edges (or half-edges), $\pi$ has $n$ cycles ordered by least elements which are the directed edges attached to each vertex and $\alpha$ is an involution without fixed point whose $m/2$ cycles of length $2$ are  identified as $E(\MM)$ the edges of $\MM$. 
For $v \in V(\MM)$, we denote by $\partial v = (e_1,\ldots,e_p) \in \vec E(\MM)^p$ the cycle of $\pi$ associated to $v$. We always choose $e_1$ such that $e_1 = \min \partial v$. The {\em degree} of $v \in V(\MM)$, $\deg(v)$ is the length of the cycle, that is $p$. We denote by $\cM_0$ the set of combinatorial maps.  

\begin{figure}[h!]
    \centering
    \begin{tikzpicture}[scale=0.8]
    \filldraw[gray] (1.5,0) circle (3pt);
    \filldraw[gray] (0,-2) circle (3pt);
    \filldraw[gray] (3,-2) circle (3pt);
    \draw (1.5,0) .. controls (0.5,1.5) and (2.5,1.5) .. (1.5,0);
    \draw (1.5,0) .. controls (1.2,-0.2) and (0.4,-0.2) .. (0,-2);
    \draw (1.5,0) .. controls (1.5,-1) and (1,-1.5) .. (0,-2);
    \draw (0,-2) -- (3,-2) ;
    \draw (1.5,0) -- (3,-2) ;
    \draw (1.7,0.6) node {\footnotesize $1$};
    \draw (1.3,0.6) node {\footnotesize $2$};
    \draw (0.8,-0.3) node {\footnotesize $3$};
    \draw (1.4,-0.5) node {\footnotesize $4$};
    \draw (2,-0.5) node {\footnotesize $5$};
    \draw (0.6,-1.5) node {\footnotesize $6$};
    \draw (0.1,-1.4) node {\footnotesize $7$};
    \draw (1,-2) node {\footnotesize $8$};
    \draw (2.6,-1.4) node {\footnotesize $9$};
    \draw (2.4,-2) node {\footnotesize $10$};
\end{tikzpicture}
\caption{$\MM=(\pi=(1,2,3,4,5)(6,7,8)(9,10),\alpha=(1,2)(3,7)(4,6)(5,9)(8,10)) \in \cM_0$.} \label{fig:1_map}
\end{figure}

We will also need to introduce combinatorial maps with boundaries. They are conveniently described by a pair $\MM = (\pi,\alpha)$ of permutations in $S_{m}$ where $\alpha$ is an involution and $m$ is not necessarily even. The elements $e \in \vec E( \MM) = \INT{m}$ such that $\alpha(e) = e$ are the boundaries of the maps. The boundaries are naturally ordered by the lexicographic order. For integer $q$, we denote by $\cM_q$ the set of combinatorial maps with $q$ boundary edges and by $\cM = \sqcup_q \cM_q$ the set of all combinatorial maps.

Three very simple maps will appear in the sequel, see Figure \ref{fig:3_maps}. For $q \geq 1$, the {\em star} map $\ST_q \in \cM_q$ is the map with a single vertex and $q$ boundary edges. For $p = 2t$ even, the {\em bouquet} map with $t$ loops $\BB_p = (\pi,\alpha) \in \cM_0$ is the map with a single vertex and $p$ directed edges : $\pi  =(1,\ldots,p)$ is a cycle and $\alpha = (1 , 2) \cdots (p-1 ,p)$. More generally, we call bouquet map a map of the form $\BB^\tau_p = (\pi,\tau)$ for $\pi  =(1,\ldots,p)$ and $\tau \in S_p$ a pairing.
The {\em melon} map of degree $p$ (or {\em Frobenius pair} in the terminology of \cite{kunisky2024tensor}), $\mathfrak{f}_p = (\pi,\alpha) \in \cM_0$ is the map with two vertices and $p$ edges between them : $\pi  =(1,\ldots,p)(p+1,\ldots,2p)$ and $\alpha=(1,p+1)\cdots(p,2p)$. Similarly, for $\sigma \in \mathrm{S}_p$, $\FF_p ^\sigma =(\pi,\alpha^\sigma)$ with $\alpha^\sigma = (1,p+\sigma(1))\cdots(p,p+\sigma(p))$.

\begin{figure}[h!]
    \centering
    \begin{tikzpicture}[scale=0.5]
    \filldraw[gray] (0,0) circle (3pt);
    \draw (0,0) -- (2,0) ;
    \draw (0,0) -- (0.618,1.902) ;
    \draw (0,0) -- (0.618,-1.902) ;
    \draw (0,0) -- (-1.618,1.176) ;
    \draw (0,0) -- (-1.618,-1.176) ;

    \filldraw[gray] (6,0) circle (3pt);
    \draw (6,0) .. controls (9.236,2.351) and (9.236,-2.351) .. (6,0);
    \draw (6,0) .. controls (9.236,2.351) and (6-1.236,3.804) .. (6,0);
    \draw (6,0) .. controls (9.236,-2.351) and (6-1.236,-3.804) .. (6,0);
    \draw (6,0) .. controls (6-1.236,3.804) and (2,0) .. (6,0);
    \draw (6,0) .. controls (6-1.236,-3.804) and (2,0) .. (6,0);

    \filldraw[gray] (10,0) circle (3pt);
    \filldraw[gray] (14,0) circle (3pt);
    \draw (10,0) .. controls (10+1.33,2) and (10+2.67,2) .. (14,0);
    \draw (10,0) .. controls (10+1.33,1) and (10+2.67,1) .. (14,0);
    \draw (10,0) -- (14,0) ;
    \draw (10,0) .. controls (10+1.33,-2) and (10+2.67,-2) .. (14,0);
    \draw (10,0) .. controls (10+1.33,-1) and (10+2.67,-1) .. (14,0);
\end{tikzpicture}
\caption{Combinatorial maps $\ST_5$, $\BB_{10}$ and $\FF_5$.} \label{fig:3_maps}
\end{figure}

We say that two combinatorial maps $\MM = (\pi,\alpha)$ and $\MM' = (\pi',\alpha')$ are equivalent if they have the same number of directed edges, say $m$, and if there exists a permutation $\sigma \in S_{m}$ such that $\pi  = \sigma \circ \pi' \circ \sigma^{-1}$, $\alpha= \sigma \circ \alpha' \circ \sigma^{-1}$ ($\MM$ and $\MM'$ are equal up to a relabeling) and the order in cycles of $\pi$ and boundaries are preserved. We then write $\MM' = \sigma. \MM $. Note that the cycles of $\pi'$ and $\pi$ are then in bijection by $\sigma$.

There will also be maps with colored vertices. They are formally defined as follows. Let $\cI$ be a set equipped with a map $\ell : \cI \to \{1,2,\cdots\}$. We denote by $\cI^{\MM}$ the set of pairs $(\MM,w)$ such that $\MM \in \cM$ and $w \in \cI^{V(\MM)}$ is such that $w_v \in \{ i : \ell (i) = \deg(v)\}$ for all $v \in V(\MM)$; $w_v$ is then referred as the color of the vertex $v$. Such a pair is called an $\cI$-map. The sets $\cM_q(\cI)$ and $\cM( \cI) = \sqcup \cM_q(\cI)$ are respectively the $\cI$-maps with $q$ boundaries and all $\cI$-maps. Our notion of equivalence extends to $\cI$-maps if we further require that colors of vertices are preserved.





\subsection{Action of combinatorial maps}

\label{susbec:action}
We now consider a complex vector space $\cE_1$ and set $\cE_0 = \dC$, and $\cE_p = \cE_1^{\otimes p}$ for $p \geq 2$. We denote by $\cE = \sqcup \cE_p$ the disjoint union of these vector spaces. We assume that $\cM$ acts on $\cE$ in the following sense. For each $\MM \in \cM_q$ with $n$ vertices, as above we let $\cE_\MM = \{ (x_1,\ldots,x_n) : x_v \in \cE_{\deg(v)} \}$. In the above terminology, if $\MM \in \cM$ and $x=(x_1,\ldots,x_{\vert V(\MM)\vert}) \in \cE_\MM$, then $(\MM,x)$ is an $\cE$-map, where $\cE = \sqcup \cE_p$ is equipped with the map $l : x_v \in \cE \mapsto p$ such that $x_v \in \cE_p$. We assume that there is an application $\cE_\MM \to \cE_q$ which we also denote by $\MM$ with a slight abuse of notation.  This action of $\cM$ has the following properties: 
\begin{enumerate}[]
\item[(CI)] {\em (Class invariance). }  If $\MM$ and $\MM'$ are equivalent, then $\MM((x_v)_{v \in V(\MM)}) = \MM'((x_{\sigma(v)}))_{v \in V(\MM)})$ with $\MM' = \sigma. \MM$.
\item[(M)] {\em (Morphism property). } If the connected components of  $\MM$ are $(\MM_1,\ldots,\MM_\gamma)$, then 
$$
\MM((x_v)_{v \in V(\MM)}) = \bigotimes_{i=1}^\gamma \MM_i( (x_v)_{v \in V(\MM_i)}).
$$
\item[(L)] {\em (Linearity). } For each integer $q \geq 0$ and $\MM \in \cM_q$, the application $\MM : \cE_\MM \to \cE_q$ is multi-linear.
\item[(S)] {\em (Substitution property). } The action is consistent by substitution of sub-maps. More precisely, if $\MM \in \cM_q$ with $k$ vertices and for all $v \in V(\MM)$, $\MM_v \in \cM_{\deg(v)}$, denote $\MM \circ (\MM_1,\ldots,\MM_k) \in \cM_q$ the map where each vertex $v$ of $\MM$ has been replaced by $\MM_v$ with the right matching on the boundary edges, and then
$$
\MM(\MM_1((x_v)_{v \in V(\MM_1)}),\ldots,\MM_k((x_v)_{v \in V(\MM_k)})) = [\MM \circ (\MM_1,\ldots,\MM_k)] ((x_v)_{v \in \cup_{j=1}^k V(\MM_j)}).
$$
Moreover, for all $q \geq 1$, the star map acts as an identity: for all $x \in \cE_q$, $\ST_q(x) = x.$

\item[(Id)] {\em (Existence of an identity in $\cE_p$ for even $p$). } Let $p$ be even and $\tau=(1,2)(3,4)\ldots (p-1,p) \in \mathrm{S}_p$ the canonical pairing. If $\MM \in \cM$ and $u \in V(\MM)$ with degree $p$, if $|V(\MM)| \geq 2$, we denote $\MM^{\backslash u} =(\pi',\alpha')$ the map obtained from $\MM$ as follows. If there is no edge from $u$ to $u$, remove the vertex $u$, denote the boundaries $\alpha( \partial u) = (e_1,\ldots,e_{p})$ and rewire the new  with $\alpha'(e_j) = e_{\tau(j)}$ for any $j \in \INT{p}$. If there is an edge from $(\partial u)_{r}$ to $(\partial u)_{s}$ in $\MM$, then we rather rewire $\alpha'(e_{\tau(r)}) = e_{\tau(s)}$ and do it successively for all edges from $u$ to $u$. 
There exists an element $1_p \in \mathcal E_p$ such that for any $\MM \in \cM_0$, $u \in V(\MM)$ and $x \in \cE_\MM$ such that $x_{u} = 1_p$ we have 
$$\MM( (x_v)_{v \in V(\MM)}) = \MM^{\backslash u} ( (x_v)_{v \in V(\MM) \backslash \{u\}}),$$ 
as soon as $\gamma(\MM^{\backslash u}) =\gamma(\MM) + p/2 -1$ (that is the maximal possible number of connected components). 
We may define also all the permuted identities associated to other pairings $\tau \in S_p$ and denote it $1^\tau_p$.
\end{enumerate}

These axioms are close in spirit to the algebraic traffic spaces developed in  \cite{zbMATH07338275,zbMATH07822923} and can be formalized in terms of operads \cite{zbMATH00970059}. If we restrict ourselves to maps with vertices of degree $2$, a connected map $\MM \in \cM_2$ defines a product of operators and a map $\MM \in \cM_0$ is a linear function.

In this general framework, there is a subtlety associated to the various identities $1^\tau_p$ for even $p \geq 4$ and pairing $\tau$. We will usually require an extra property which can be thought of as an orthonormality condition. 
\begin{definition}[Orthonormal identities]\label{def:normal}
We say that the identities are orthonormal if for all even $p$ and pairing $\tau,\tau' \in \mathrm{S}_p$, we have $\BB_p^{\tau'} (1^\tau_p) = \delta_{\tau',\tau}$ where $\BB_p^{\tau'}$ is the bouquet of $p/2$ loops associated to the pairing $\tau'$. 
\end{definition}

\begin{remark}[Variants] \label{rk:variant} There are natural variants of the above axioms. Let us cite a few. As in $*$-algebras, we could have added antilinear maps $x \mapsto x^*$ on $\cE_p$, $p \geq 1$. Also, the maps $\cM$ could be (edge or vertex) colored maps with colors on some finite set and the action of a map could depend on the colors. We might also assume that there are two classes of directed edges: inputs and outputs. Then the permutation $\alpha$ is constrained to match input and output directed edges.
These variants could be useful for example to describe actions of maps on tensors with legs of different dimensions.
With minor modifications, all these variants can be treated along the lines of the framework we are focusing on, see follow-up Remark \ref{rk:variant2}.
\end{remark}

\subsection{Trace invariants}

\label{subsec:TR}
The central example  is the following. For integer $N \geq 1$, we set $\cE^N_p = (\dC^N)^{\otimes p}$, that is $\cE^N_1$ are vectors, $\cE^N_2$ are matrices, and, for $p \geq 3$, $\cE^N_p$ are tensors with $p$ legs of dimension $N$. For $x \in \cE^N_p$, we write $x = (x_{i})_{i \in \INT{N}^p} \in \cE^N_p$. The action of $\cM$ on $\cE^N = \sqcup \cE^N_p$ is defined for $\MM \in\cM_q$, with $\partial = (e_1,\ldots,e_q) \subset \vec E(\MM)$ being its boundary, as
\begin{equation}\label{eq:tracem}
\MM ( (x_v)_{ v\in V(\MM)} )_{i_{\partial}} = \frac{1}{N^\gamma} \sum_{i \in \INT{N}^{E(\MM)\setminus \partial}} \prod_{v \in V(\MM)} (x_v)_{i_{\partial v}},
\end{equation}
where $\gamma = 0$ if $q \geq 1$ and, for $ q = 0$, $\gamma$ is the number of connected components of $\MM$. For $m \in \cM_0$, they are the natural generalization of trace for matrices.  We refer to \cite{zbMATH06638014,kunisky2024tensor} for an introduction on these trace invariants. For $p=2t$ even, the identity is the tensor $1_p = N^{-t+1} (I_N)^{\otimes t} \in \cE^N_p$ and $\BB^{\tau'}_p ( 1^\tau_p) = \delta_{\tau',\tau} + O(1/N)$. It is thus asymptotically orhonormal in the sense of Definition \ref{def:normal}.

\begin{example}
    For the map $\MM$ given in Figure \ref{fig:1_map} which has no boundary, and $T_1 \in \cE^N_5$, $T_2 \in \cE^N_3$ and $M_1 \in \cE^N_2$,
$$
\MM ( T_1,T_2,M_1 ) = \frac{1}{N} \sum_{a,b,c,d,e} (T_1)_{aabcd} (T_2)_{bce} (M_1)_{de},
$$
Also, we have for instance $\ST_5 (T_1) =T_1$.
\end{example}

\subsection{Action distribution}

We come back to the general setup. Let  $\cA = \{ a_i : i \in \cI\}$ be a subset of elements in $\cE$ and $\ell : \cI \to \{1,2,\cdots\}$ is such that $a_i \in \cE_{\ell(i)}$.  We define the {\em distribution} of $\cA$ as the collection of all $\MM(x)$ for all $\cA$-maps $(\MM,x)$ with $\MM \in \cM_0$ (and $x = ( x_1,\ldots, x_n) \in \cA^n \cap \cE_\MM$ by definition). We say that $(\MM,x)$ is {\em centered} if $\MM(x)  = 0$.

A collection of vectors spaces $\cH = (H_p)_{p  \geq 0}$ with $H_p $ subspace of $\cE_p$ is said to be a {\em $\cM$-bundle} if the following two conditions are met: (a) for all even $p$ and pairing $\tau$, $1^\tau_p \in H_p$  and (b) for all $q \geq 0$, for all $\cH$-maps $(\MM,x)$ with $\MM \in \cM_q$ and $x \in \cH^n \cap \cE_\MM$, we have $\MM(x) \in H_q$.    The {\em distribution} of $\cH$ is the collection of all $\MM(x)$ for all $\cH$-maps $(\MM,x)$ with $\MM \in \cM_0$.

If  $\cA = \{ a_i : i \in \cI\}$ is a subset of elements in $\cE$, we denote by  $\langle \cA \rangle = (H_p)_{p \geq 1}$ the $\cM$-bundle spanned by $\cA$. That is, $H_p$ is spanned by $\MM(x)$ for all $\cA$-maps $(\MM,x) \in \cM_p( \cA)$ and, if $p$ even, $1^\tau_p$ for all pairing $\tau$. Note that by properties (S)-(L)-(Id), the distribution of $\langle \cA \rangle$ is characterized by the distribution of $\cA$.

\subsection{Freeness}

We now introduce a notion of freeness for the distribution of elements in $\cE$. In this section, we assume that we have orthonormal identities as per Definition \ref{def:normal}.

\paragraph{The non-crossing poset on maps. } Let $m$ be an even integer, $\pi \in S_{m}$ and $\cM_{\pi} \subset \cM_0$ be the set of maps $\MM = (\pi,\alpha)$ for some involution $\alpha \in S_{m}$. We consider the graph $\cG_\pi$ on $\cM_{\pi}$ where two maps $\MM = (\pi,\alpha)$  and $\MM' = (\pi,\alpha')$ are connected by an edge if they differ by a switch, that is $\alpha' \alpha $ is the product of two disjoint transpositions. If $\MM$ and $\MM'$ differ by a switch then $\gamma(\MM) - \gamma(\MM') \in \{-1,0,1\}$, where $\gamma(\cdot)$ is the number of connected components of a map. We further say that $\MM < \MM'$ if $\gamma(\MM) = \gamma(\MM') + 1$. We extend this relation $<$ to $\cM_{\pi}$ by transitivity. It gives to $\cM_{\pi}$ the structure of a poset which we will call, by analogy, the non-crossing poset on $\cM_{\pi}$.

Indeed, for maps with vertices of constant degree $2$, we retrieve non-crossing conditions in the following sense. A connected map with vertices of degree $2$ is equivalent to the map $\MM=(\pi,\alpha) \in \cM_0$ with $\pi=(1,2)\ldots(2n-1,2n)$ and $\alpha=(2,3)(4,5)\ldots(2n,1)$. For $\MM' \in \cM_\pi$, denote $\PP(\MM')$ the partition of $\INT{n}$ whose blocks are the vertices of $\MM'$ in the same connected components. Then, $\MM'<\MM$ if and only if $\PP(\MM')$ is a non-crossing partition. 

\begin{figure}[h!]
     \centering
     \begin{tikzpicture}[scale=0.5]
    
    \fill[lightgray,even odd rule] (0,0) circle(1.5) circle(2);
    \fill[lightgray,even odd rule] (5,0) circle(1.5) circle(2);
    
    \draw (0,0) node {$\MM_1$} ; 
    \draw (5,0) node {$\MM_2$} ;
    
    \draw (1,1) .. controls (2,2) and (3,2) .. (4,1);
    \draw (1,-1) .. controls (2,-2) and (3,-2) .. (4,-1);
    
    \draw (9,0) node {$>$} ;
    
    \fill[lightgray,even odd rule] (13,0) circle(1.5) circle(2);
    \fill[lightgray,even odd rule] (18,0) circle(1.5) circle(2);
    
    \draw (13,0) node {$\MM_1$} ; 
    \draw (18,0) node {$\MM_2$} ;
    
    \draw (14.5,1) .. controls (15.5,0.5) and (15.5,-0.5) .. (14.5,-1);
    \draw (16.5,1) .. controls (15.5,0.5) and (15.5,-0.5) .. (16.5,-1);
    
\end{tikzpicture}
\caption{Non-crossing poset. In the map on the left, all edges belong to a single block (connected component). In the map on the right, the edges are split into two blocks, and exchanging two edges from different blocks can no longer increase the number of connected components, yielding the characteristic non-crossing structure.} 
\end{figure}

We say that $\MM$ is {\em minimal} if there is no $\MM'$ such that $\MM' < \MM$. For $\MM \in \cM_0$, we denote $\cP_{\MM}$ the poset of maps $\MM'$ such that $\MM \leq \MM'$. Importantly, the poset is consistent with the action on $\cE$. More precisely, if $\MM \leq \MM'$ then $\cE_{\MM'} = \cE_{\MM}$. In other words, the set $\cE_{\MM}$ is common on the poset  $\cP_{\MM}$.

\paragraph{Definition of freeness. } Assume orthonormal identities as per Definition \ref{def:normal}. Let $(\cA_c)_{c \in \cC}$ be a finite collection of  subsets in $\cE$ and set $\cA = \sqcup_{c \in \cC} \cA_c = \{ a_i : i \in \cI\}$ be the disjoint union of these subsets. Let $\langle \cA \rangle$ and  $\langle \cA_c \rangle$ be the $\cM$-bundles spanned by $\cA$ and $\cA_c$, $c \in \cC$. 

Let $(\MM, x)$ be an $\langle \cA \rangle$-map, we say that it is {\em monochromatic} if there is a $c \in \cC$ such that $x_i \in \langle \cA_c \rangle $ for all $i \in V(\MM)$. 
Also we say that two $\cA$-maps $(\MM',x)$ and $(\MM,x)$ such that $\MM'<\MM$ differ by a {\em chromatic switch} if the switch changes two edges in $\MM$ between vertices, say $(u_1,u_2)$ and $(u_3,u_4)$, with $x_{u_i} \in \langle \cA_{c_i} \rangle$ such that $c_1 \ne c_2$, $c_3 \ne c_4$ but $\{c_1,c_2\} \cap \{c_3, c_4\} \ne \emptyset$. 

For $(\MM,x) \in \cM_0 (\langle \cA \rangle)$, we define the maps $\widehat \MM \leq \MM$ as the ones satisfying:
    \begin{itemize}
    \item[(P1)] Each connected component of $\widehat \MM$ is monochromatic or minimal non-monochromatic,
    \item[(P2)] 
    If $\widehat \MM \ne \MM$, there is a path $\MM = \MM_0 > \MM_1 > \ldots > \MM_t = \widehat \MM$ in $\cM_{\pi}$ with only chromatic switches (that is, for all $1\leq i\leq t$, $\MM_{i-1}$ and $\MM_{i}$ differ by a chromatic switch) and for all $1 \leq i \leq t$, $\MM_{i-1}$ does not satisfy (P1).
\end{itemize}
When all the elements are matrices, a connected map $\MM$ is a cycle and the maps $\widehat \MM$ are the non-crossing partitions where consecutive elements of the same family must belong to the same block.

We are ready for the main definition.

\begin{definition}[Tensor freeness]
\label{def:freeness}
    The sets $(\cA_c)_{c \in \cC}$ are {\em free} if for all $\langle \cA \rangle $-maps $(\MM,x) \in \cM_0(\langle \cA \rangle)$ we have $\MM(x) = 0$ as soon as any map $\widehat \MM$ satisfying (P1)-(P2) has a minimal non-monochromatic or a centered monochromatic connected component.
\end{definition}

\begin{remark}[Variants]
    \label{rk:variant2} For the variants described briefly in Remark \ref{rk:variant} the definitions of distributions and freeness carry over. For actions with inputs and outputs, we should simply require that the switches are only between allowed edges (two inputs and two outputs). When there are antilinear maps $x \mapsto x^*$ on $\cE_p$ for all $p\geq 1$, when we define the distribution of a set $\cA$ or a $\cM$-bundle $\cH$, we should require that there are $*$-invariants.
\end{remark}

\subsection{Freeness and individual distributions}
\label{subsec:indiv}

The goal of this section is to show that freeness has the expected determinacy property: the distribution of a union of free families is determined by the individual distributions of the families.

\begin{theorem}\label{th:individualdistrib}
If the sets $(\cA_c)_{c \in \cC}$ are free, then the distribution of $\cA =  \sqcup_{c \in \cC} \cA_c $ is characterized by the individual distributions of $\cA_c$, $c\in \cC$. 
\end{theorem}

To prove Theorem \ref{th:individualdistrib}, we first establish several preliminary results.

\paragraph{Maximal colored sub-maps.} 
Consider families $(\cA_c)_{c \in \cC}$ with $|\cC| \geq 2$ and set $\cA = \sqcup \cA_c$. In an $\cA$-map, $(\MM,x)$ we call an edge chromatic if its endpoints are decorated by elements belonging to different families. The maximal colored sub-maps of $(\MM,x)$ are the connected components obtained from $\MM$ by deleting all chromatic edges. Equivalently, two vertices are in the same maximal colored sub-map if they can be joined by a path all of whose vertices are decorated by elements of the same family. We denote $(\MM_1,x),\ldots,(\MM_k,x)$ the corresponding maximal colored sub-maps, where $\MM_j \in \mathcal{M}_{q_j}$ and $(\MM_j,x)$ is monochromatic (in one of the $\cA_c$'s). Then we may write
\begin{equation}\label{eq:unionmap}
     \MM(x) = [\MM_{\BB} \circ (\MM_1,\ldots,\MM_k)](x),
\end{equation}
for some $\MM_{\BB} \in \mathcal{M}_q(\MM)$. By maximality of the colored sub-maps, all edges of $\MM_{\BB}$ are chromatic.

If for some $j$, $\MM_j \in \cM_{q_j}$ with $q_j$ even, we set $\MM_j^{\CL}= \BB_{q_j} \circ \MM_j  \in \cM_0$, or in other words, if $\MM_j = ( \pi,\alpha)$, then $\MM_j^{\CL} = ( \pi,\alpha^{\CL})$ where we rewire the boundary edges together, that is $\alpha^{\CL}(e_i)=e_{i+1}$ for odd $i$. 
For $\tau \in \mathrm{S}_{q_j}$ a pairing, we similarly define $\MM_j^{\CL^\tau}= \BB_{q_j}^\tau \circ \MM_j  \in \cM_0$, where we rewire the boundary edges with $\alpha^{\CL}(e_i)=e_{\tau(i)}$.

\begin{lemma}\label{le:compcolor}
    Let 
    $\MM = \MM^1 > \MM^2 > \ldots > \MM^T $
    be a chain of chromatic switches.
    For $1\leq t < T$, denote
\begin{align*}
    k_t &:= \text{the number of initial colored blocks which are still maximal colored sub-maps of } \MM^t, \\
    a_t &:= \text{the number of chromatic edges of } \MM^t, \\
    c_t &:= \text{the number of non-monochromatic connected components of } \MM^t.
\end{align*}
    Then for every $1 \leq t < T$,
    $$ \frac{k_{t+1}}{2} - a_{t+1} + c_{t+1} \geq \frac{k_{t}}{2} - a_{t} + c_{t} +1 .$$
\end{lemma}

\begin{proof}
Consider the chromatic switch from $\MM^t$ to $\MM^{t+1}$. Let the two switched chromatic edges be $(u_1,u_2)$ and $(u_3,u_4)$, we first consider the case where $x_{u_1},x_{u_3}\in\langle\cA_1\rangle$, and $x_{u_2},x_{u_4}\in\langle\mathcal A_2\rangle$.

First suppose that the switch is of type ``$-$'', namely $(u_1,u_2),(u_3,u_4) \longmapsto (u_1,u_4),(u_2,u_3)$. The two new edges are still chromatic so $a_{t+1}=a_t$. Moreover, no two same-colored components are joined, so $k_{t+1}=k_t$. Since the switch is strict, it disconnects one connected component and each of the two new components contains one of the new chromatic edges, and is
therefore non-monochromatic, thus $c_{t+1}=c_t+1$.

Now suppose that the switch is of type ``$+$'', namely $(u_1,u_2),(u_3,u_4) \longmapsto (u_1,u_3),(u_2,u_4)$. The two new edges are non-chromatic, so $a_{t+1}=a_t-2$.
\newline First, if both pairs $(u_1,u_3)$ and $(u_2,u_4)$ already belonged to the same
colored components, then no initial colored block is lost, $k_{t+1}=k_t$. The switch can decrease the number of non-monochromatic connected components by at most one, so $c_{t+1}\geq c_t-1$.
\newline Second, if exactly one of the pairs $(u_1,u_3)$ and $(u_2,u_4)$ belonged to the same colored component, then at most two initial colored blocks cease to be maximal, therefore $k_{t+1}\geq k_t-2$. Moreover, the number of non-monochromatic connected components does not decrease, hence $c_{t+1}\geq c_t$.
\newline Finally, if neither $u_1$ and $u_3$, nor $u_2$ and $u_4$, belonged to the same colored component, then at most four initial colored blocks cease to be maximal, so $k_{t+1}\geq k_t-4$. Since the switch is strict, the component is disconnected. In this case both new connected components are still non-monochromatic, so $c_{t+1}= c_t+1$.

By definition of a chromatic switch it remains to check the case of a switch involving $(u_1,u_2),(u_3,u_4)$ with $x_{u_1},x_{u_3} \in \langle \cA_1 \rangle$, $x_{u_2} \in \langle \cA_2 \rangle$ and $x_{u_4} \in \langle \cA_3 \rangle$, $\cA_1, \cA_2, \cA_3$ being three distinct families. 
\newline If the switch is of type ``$-$'', we still have $a_{t+1}=a_t $, $k_{t+1}=k_t$ and $c_{t+1} = c_t +1$.
\newline If the switch is of type ``$+$'' (joining $u_1$ and $u_3$), then we now have $a_{t+1}=a_t -1$. There are two possibilities: if $u_1$ and $u_3$ already belonged to the same colored submap before the switch, then $k_{t+1}=k_t$ and $c_{t+1} \geq c_t$. Otherwise, $k_{t+1}=k_t-2$ and $c_{t+1} = c_t +1$.  

The lemma is proved in all cases.
\end{proof}

\begin{proof}[Proof of Theorem \ref{th:individualdistrib}] 
It is enough to consider connected maps. Indeed, from the morphism property (M), the evaluation of a map factorizes over its connected components, and each connected component involves no more maximal colored submaps than the original map. Let $(\MM,x)$ be a connected $\langle \cA\rangle$-map. Let $(\MM_1,x),\ldots,(\MM_k,x)$ be its maximal colored submaps, and write
$$\MM(x)= \bigl[\MM_{\BB}\circ(\MM_1,\ldots,\MM_k)\bigr](x).$$
Assume first that no admissible descendant $\widehat\MM\leq \MM_{\BB}$ satisfying (P1)-(P2) has only monochromatic connected components. Then every admissible descendant has a non-monochromatic connected component, and its contribution vanishes by freeness. Hence $\MM(x)=0$. Equivalently, the only case which can give a non-zero contribution is the case where some admissible descendant has only monochromatic connected components. 

Now assume that $\widehat\MM\leq \MM_{\BB}$ satisfying (P1)-(P2) has only monochromatic connected components.
We claim that at least one connected component of $\widehat \MM$ is a bouquet map. 

We proceed by contradiction. Assume that $\widehat \MM$ has no bouquet connected component. 
Then it has at most $k/2$ connected components. Hence there exists $T \leq k/2$ such that
$$\MM_{\BB} = \MM^1 > \MM^2 > \ldots > \MM^T = \widehat \MM, $$
is a chain of chromatic switches. 
At the end, since all the connected components of $\widehat \MM$ are monochromatic,
$$a_T=0 \quad \text{ and } \quad c_T=0 ,$$
where $a_t,c_t$ are as in Lemma \ref{le:compcolor}.
Moreover, since each chromatic switch removes at most two chromatic edges and all chromatic edges have disappeared at step $T$, we have
$$ a_1 \leq 2(T-1) .$$
Clearly, we also have $k_1=k$, $c_1=1$. Applying Lemma \ref{le:compcolor} along the chain gives
$$ \frac{k_T}{2} = \frac{k_T}{2} - a_T +c_T \geq \frac{k_1}{2} - a_1 +c_1 + T -1 \geq \frac{k}{2} - T + 2 \geq 2 .$$
In particular $k_T >0$. Thus, at least one initial block $\MM_j(x)$ is still a maximal colored submap of $\widehat \MM$. Since $\widehat \MM$ has only monochromatic connected components, then this connected component is precisely a bouquet map. We get our contradiction.

Having established this result, we now proceed by induction on $k$ to prove the theorem.
If $k=1$, the result is clear as $\MM(x)=\widehat \MM(x)$ is monochromatic. Now assume that $k \geq 2$.
We define for $1\leq j \leq k$ such that $\MM_j \in \cM_{q_j}$ with $q_j$ even,
\begin{equation} \label{centering}
     x_j^0 = \MM_j(x) - \sum_{\tau \in \mathcal{P}_2(q_j)} \MM_j^{\CL^\tau}(x) 1_{q_j}^\tau \in \cE_{q_j},
\end{equation}
where $\MM_j^{\CL^\tau}(x)= (\BB_{q_j}^\tau \circ \MM_j)(x) \in \dC$,  $1^\tau_{q_j} \in \cE_{q_j}$ are the identities from property (Id) and $\mathcal{P}_2(q_j)$ is the set of pairings of $\INT{q_j}$. By properties (L), (Id) and orthonormality of the identities, we have for any pairing $\tau_j \in S_{q_j}$,
\begin{align*}
    \BB_{q_j}^{\tau_j} (x_j^0) 
    &= \MM_j^{\CL^{\tau_j}}(x)- \sum_{\tau} \MM_j^{\CL^\tau}(x) \BB_{q_j}^{\tau_j}(1_{q_j}^\tau)  \\
    &= \MM_j^{\CL^{\tau_j}}(x)- \MM_j^{\CL^{\tau_j}}(x) =0 .
\end{align*}
If $q_j$ is odd, we denote $x_j^0 = \MM_j(x) \in \cE_{q_j}$ and with a slight abuse of notation, $\MM_j^{\CL^\tau}(x)=0 \in \dC$ for any $\tau$.
Note that $\MM^{\CL}_j$ is monochromatic, so $x_j^0 \in \langle \cA_c \rangle$, for some $c$. If $q_1$ is even, we can write
\begin{align*}
    \MM(x) &= [\MM_{\BB}\circ (\MM_1,\ldots,\MM_k)](x) \\
    &= \MM_{\BB}(\MM_1(x),\ldots,\MM_k(x)) \tag*{\text{by (S)}} \\
    &= \MM_{\BB}(x_1^0,\MM_2(x),\ldots,\MM_k(x)) + \sum_{\tau} \MM_1^{\CL^\tau}(x)  \MM_{\BB}(1_{q_1}^\tau,\MM_{2}(x),\ldots,\MM_k(x)) \tag*{\text{by (L)}} 
\end{align*}
Note that this final expression is also true in the trivial case where $q_1$ is odd.
In either case, the second term is characterized by the individual distributions of $\cA_c, c \in \cC$ by the induction hypothesis (and it is $0$ if $q_1$ is odd). Then, we continue iteratively this decomposition on all sub-maps $\MM_2 ,\ldots,\MM_k$:
$$
\MM(x) = \MM_{\BB} (x_1^0,\ldots,x_k^0) + \sum_{l= 1}^k   \sum_{\tau} \MM_l^{\CL^\tau}(x) \MM^l_{\BB}\left((x^0_j)_{j < l },1_{q_l}^\tau ,(\MM_j(x))_{j>l}\right).
$$ 
Each term of the right hand side sum is characterized by the individual distribution of  $(\cA_c)_{c \in \mathcal \cC}$ by induction.
It remains to prove that 
$$\mathfrak{o}(x):=\MM_{\BB} (x_1^0,\ldots,x_k^0) = 0.$$ 
Remark that, by (P2), the maximal colored sub-maps of $(\mathfrak{o},x)$ are the singletons $x_1^0,\ldots,x_k^0$. Any $\widehat \MM \leq \MM_{\BB}$ satisfying (P1)-(P2) contains a bouquet map of the form $\BB_{q_j}^{\tau_j}$ which is therefore centered, since the corresponding $q_j$ is necessarily even and $\BB_{q_j}^{\tau_j} (x_j^0)=0$. The result is then a consequence of the definition of freeness. 
\end{proof}



\subsection{Freeness and melonics}

Melonic maps form a particularly important class of maps in the poset
introduced above. We say that a connected map $\MM$ is {\em melonic} if it is greater, in the poset, than a disjoint union of melon maps. Equivalently, if $\MM$ has $2k$ vertices, then there exist degrees $p_1,\ldots,p_k$ and permutations $\sigma_j\in S_{p_j}$ such that
$$\MM \geq \mathfrak f_{p_1}^{\sigma_1} \sqcup \cdots \sqcup \mathfrak f_{p_k}^{\sigma_k}.$$
Thus $\MM$ can be reduced, by a finite sequence of disconnecting switches, to
a disjoint union of $k$ elementary melon maps. This definition is equivalent to the usual recursive construction of
melonic maps. Namely, a connected combinatorial map $\MM$ with $2k$ vertices is melonic if and only if either $k=1$ and $\MM$ is itself a melon, or there exists a switch $\MM' < \MM$ such that $\MM'$ has exactly two connected components, one of which is a melon map $\mathfrak f_p^\sigma$, and the other of which is a melonic map with $2(k-1)$ vertices.

In the planar case, when all elementary melons have the same degree $p$, the number of rooted planar melonic maps with $2k$ vertices is given by the Fuss-Catalan number
$$ F_p(k):=\frac{1}{pk+1}\binom{pk+1}{k}.$$
These maps are in bijection with $p$-uniform plane hypertrees with $k$ hyperedges; see, for instance, \cite{bonnin2024universality}. Equivalently, they are counted by the $(p-1)$-divisible non-crossing partitions of $[k(p-1)]$, that is, non-crossing partitions whose block sizes are multiples of $p-1$.
This bijective picture extends to melonic maps with melons of arbitrary degrees. In that case, one obtains plane hypertrees with $k$ hyperedges, where the $j$-th hyperedge has order equal to the degree of the corresponding melon $\mathfrak f_{p_j}^{\sigma_j}$.

\subsection{Convergence in distribution and asymptotic freeness}
\label{subsec:asymptfree}

We may classically define convergence in distribution and asymptotic freeness from our definitions. We write it formally as follows.

Fix a finite set $\cI$ and a function $\ell : \cI \to \{1,2,\ldots \}$.  For each $N \geq 1$, let $\cE^N = \sqcup_q \cE^N_q$ be a union of vector spaces as above and $\cA^N = \{ a_i^N : i \in \cI \}$ such that $a^N_i \in \cE^N_{\ell(i)}$ for all $N$.

For readability, we will use the notation $\MM = (\pi,\alpha,w)$ to denote an $\cI$-map (in place of $(\MM,w)$ as above).  If $\MM = (\pi,\alpha,w) \in \cM_q(\cI)$ is an $\cI$-map, we 
define $\MM(\cA_N) \in \cE^N_q$ as the image of the action of the associated $\cA_N$-map $\MM$ where for all $v \in V(\MM)$, $w_v$ is replaced by $a^N_{w_v}$. 

We say that the sequence $(\cA^N)_{N \geq 1}$ {\em converges in distribution} if for all $\cI$-maps $\MM \in \cM_0( \cI)$, there exists a number $\mu(\MM)$ such that

\begin{equation}\label{eq:cvdist}
\lim_{N \to \infty} \MM( \cA_N) = \mu (\MM).
\end{equation}

For example, the random real symmetric tensors $W^N$ converges in distribution.  The asymptotic distribution is as follows : $\mu(\MM) = 1$ if the connected components of $\MM$ are melonic and $0$ otherwise, see \cite{zbMATH06638014,gurau2020generalization,bonnin2024universality}. This result can also be easily extracted from the Schwinger-Dyson equation established in Section \ref{sec:SD}.

We now define asymptotic freeness. Let $\cC$ be a finite set and let $(\cA^N_c)_{c \in \cC}$ be a finite collection of disjoint subsets in $\cE^N$. We write $\cA^N_{c} = \{ a^N_{c,j} : j \in \cI_c \}$ and $\cA^N = \sqcup \cA^N_c = \{ a_i ^N : i \in \cI\}$ with $\cI = \sqcup_c \cI_c$. As above, we assume that for each $i \in \cI$, there exists $\ell(i)$ such that $a^N_i \in \cE^N_{\ell(i)}$.

To define asymptotic freeness, we need to introduce  bundle elements in $\langle \cA^N\rangle$ which are consistent over all $N$. More formally, for integer $q \geq 1$, we denote by $\mathbb C [\cM_q(\cI)]$, the set of finite formal sums $\mathfrak p = \sum_{k} w_k \MM_k$, where $w_k \in \dC$ and $\MM_k \in \cM_q(\cI)$.  If $\mathfrak p =  \sum_{k} w_k \MM_k \in \mathbb C [\cM_q(\cI)]$, then $\mathfrak p (\cA_N) :=  \sum_{k} w_k \MM_k(\cA_N) \in \cE^N_q$ is the associated element in the $\cM$-bundle $\cA^N$.  Now, let $\cJ = \sqcup_{c} \cJ_c$ be a disjoint union of finite sets and let $\ekk : \cJ \to \dN$ be some order function. We may then speak of  $\cJ$-maps (as for $\cI$-maps the dependency in $\ekk$ is implicit). We consider a sequence $\mathfrak p = (\mathfrak p_j)_{j \in \cJ}$, where, for $j \in \cJ_c$, $\mathfrak p_{j} \in \mathbb C [\cM_{\ekk(j)}(\cI_c)]$. We observe that $\mathfrak p_{j}  (\cA^N) \in \langle \cA^N_c \rangle \cap \cE^N_{\ekk(j)}$ for $j \in \cJ_c$ and  set $\mathfrak p (\cA_N) = (\mathfrak p_j(\cA_N))_{j \in \cJ})$.

We then say that  $(\cA^N_c)_{c\in \cC}$ {\em is asymptotically free} if for all triples $(\cJ,\ekk,\mathfrak p)$ as above, all $\cJ$-maps $\MM \in \cM_0(\cJ)$ satisfying conditions (P1) and (P2) in the definition of freeness, we have 
\begin{equation}\label{eq:asfree}
\lim_{N \to \infty} \MM( \mathfrak p(\cA_N)) = 0.
\end{equation}

If the elements in $\cA^N$ are random variables, we speak of convergence in distribution (asymptotic freeness resp.) {\em in probability} or {\em in expectation} if Equation \eqref{eq:cvdist} (Equation \eqref{eq:asfree} resp.) holds in probability or in expectation for all relevant maps $\MM$.

\section{Free cumulants}

\label{sec:cumul}

In this section, we define a notion of free cumulants associated to our notion of freeness. These free cumulants are natural extensions of the free cumulants in free probability theory and they are closely related to the free cumulants introduced in  \cite{kunisky2024tensor} for tensors of fixed dimension $N$. We refer also to \cite{collins2025freecumulantsfreenessunitarily} for a closely related and more complete treatment. This section is independent of the remainder of the paper and it is not used in the proofs of our main results stated in introduction.

\subsection{Definition of free cumulants}

We come back to the general setup of a complex vector space $\cE_1$ and an action from maps $\cM$ on $\cE = \sqcup \cE_p$ as defined in Subsection \ref{susbec:action}. We assume that we have orthonormal identities as per Definition \ref{def:normal}.

Let $\MM_0$ be minimal. We consider the poset $\cP_{\MM_0}$ of elements $\MM \in \cM_0$ such that $\MM_0 \leq \MM$. As is it usual in free probability theory, the free cumulants are routinely defined through Moebius inversion. It is the content of the next lemma. To make sense of the next statement, recall that $\MM' \leq \MM$ implies that $\cE_{\MM'} = \cE_{\MM}$. 

\begin{lemma}[Existence of free cumulants]\label{le:defcumulant}
For all $\MM \in \cP_{\MM_0}$, there exists a unique application $\kappa^{(\MM_0)}_{\MM} : \cE_{\MM} \to \dC$ satisfying the class invariance (CI), morphism (M) and multi-linearity (L) properties such that for all $x \in \cE_{\MM}$
$$
\MM(x) = \sum_{\MM_0 \leq \NN \leq \MM} \kappa_{\NN}^{(\MM_0)} (x).
$$
\end{lemma}

We call the application $\kappa_{\MM}^{(\MM_0)}$ the {\em free cumulant} of $\MM$ in $\cP_{\MM_0}$.

\begin{proof}[Proof of Lemma \ref{le:defcumulant}]
The poset $\cP_{\MM_0}$ is locally finite, that is for any $\MM$, the set of maps $\NN$ such that $\MM_0 \leq \NN \leq \MM$ is finite. We may thus define the Moebius function of $\cP_{\MM_0}$ by the formulas: for all $\NN \leq \MM$,  $\mu ( \MM, \MM) = 1$ and $\mu(\NN,\MM) = - \sum_{\NN \leq \MM' < \MM} \mu( \NN,\MM')$, see \cite[Section 3.7]{zbMATH06016068}. For each $x \in \cE_{\MM_0}$, the function 
$$
\kappa_{\MM}^{(\MM_0)}(x) = \sum_{\MM_0 \leq \NN \leq \MM} \mu(\NN,\MM) \NN(x)
$$
is the unique application $f : \cP_{\MM_0}  \to \dC$ such that for all $\MM \in \cP_{\MM_0}$, $\MM(x) = \sum_{\NN \leq \MM}  f(\NN)$, see \cite[Proposition 3.7.1]{zbMATH06016068}. The properties (CI), (M) and (L) are immediate to check (Property (M) follows from the observation that if $\MM$ has connected components $(\MM_1,\ldots,\MM_{\gamma})$ then $\MM_0 \leq \NN \leq \MM$ can be uniquely decomposed into components  $(\NN_1,\ldots,\NN_{\gamma})$ such that $\NN_i \leq \MM_i$ for all $i$). 
\end{proof}

\par There is a weak form of the substitution property (S) for the free cumulants. 
This weak substitution property is only non-trivial for tensors with more than $2$ edges. Indeed, any $\MM \in \cP_{\MM_0}$ can be written as 
$$
\MM = \MM_{\BB} \circ (\MM_1,\ldots, \MM_k), 
$$
where $\MM_j^{\CL}, 1 \leq j \leq k$ are the connected components of $\MM_0$. We then have the following lemma:
\begin{lemma}[Weak property (S) for the free cumulants]\label{le:subcumulant}
Let $\MM \in \cP_{\MM_0}$ written as above as $\MM = \MM_{\BB} \circ (\MM_1,\ldots,\MM_k)$ where $\MM_j^{\CL}$ are the connected components of $\MM_0$. For all $x \in \cE_{\MM}$, 
\begin{equation*}\tag{S}
    \kappa_{\MM}^{(\MM_0)}(x) =  \kappa_{\MM_\BB}^{(\MM_0)}(\MM_1(x),\ldots, \MM_k(x)).
\end{equation*} 
\end{lemma}

\begin{proof} By construction, any $\MM_0 \leq \MM' \leq \MM$ can be written as $\MM' = \MM'_{\BB} \circ (\MM_1,\ldots, \MM_k)$ with $\MM'_{\BB} \leq \MM_{\BB}$. From the minimality of $\MM_0$, we deduce
$$
\kappa_{\MM}^{(\MM_0)}(x)=\kappa^{(\MM_0)}_{\MM_{\BB} \circ (\MM_1,\ldots,\MM_k)}(x) 
    = \sum_{\NN \leq \MM_{\BB}} [\NN \circ (\MM_1 ,\ldots,\MM_k)]  (x) \mu(\NN,\MM).
$$
From the property (S), $[\NN \circ (\MM_1 ,\ldots,\MM_k)] (x) = \NN ( \MM_1(x), \ldots, \MM_k(x))$.
This concludes the proof. 
\end{proof}

\paragraph{Minimal map.} Observe that if $\MM < \MM'$ differ by a single switch, then the two edges involved in the switch form a $2$-edge cut of $\MM'$. More precisely, the switches that disconnect $\MM'$ are exactly those performed on two edges that are either both bridges (i.e. $1$-edge cuts) or that together form a $2$-edge cut in which neither edge is itself a bridge.

In particular, a map $\MM$ is minimal if each of its connected components is either $3$-edge-connected, or has exactly one bridge and no $2$-edge cut that does not involve this bridge.

\begin{lemma}\label{le:evenmin}
    If a map $\MM$ has only vertices of even degrees, then there exists a unique map $\underline{\MM}$ minimal such that $\underline{\MM} \leq \MM$.
\end{lemma}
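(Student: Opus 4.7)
The plan is to prove the lemma via a Newman-style argument in the finite poset $\cP_\MM$: termination combined with local confluence of the component-increasing switches yields a unique minimum below $\MM$.

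Termination is immediate. Every switch modifies only $\alpha$ and leaves $\pi$ fixed, so the vertex set and the degree sequence are constant along $\cP_\MM$. In particular $\gamma(\MM')\le |V(\MM)|$ for every $\MM'\in\cP_\MM$, so every chain of component-increasing switches is finite and at least one minimum $\underline\MM\le\MM$ exists.

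The heart of the argument is local confluence: for any two one-step component-increasing switches $s_1,s_2$ from $\MM$ producing $\MM_1,\MM_2\in\cP_\MM$, there exists $\MM_{12}\in\cP_\MM$ with $\MM_{12}\le\MM_1$ and $\MM_{12}\le\MM_2$. I would split on whether the half-edge sets acted upon by $s_1$ and $s_2$ are disjoint or share at least one half-edge. In the disjoint case the two switches commute, and since a component-increasing switch exhibits a $2$-edge cut of its ambient component (with the two new edges lying on opposite sides of that cut), a direct check shows that $s_2$ remains component-increasing when applied after $s_1$. In the overlapping case, $s_1$ and $s_2$ share one or more half-edges at some vertex $v$, and the even-degree assumption on $v$ is used crucially: the half-edges at $v$ not already committed to one of the two switches form a set of even cardinality, which can be re-paired to build the third switch completing the diamond. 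Given local confluence, Newman's lemma in the finite poset $\cP_\MM$ implies that any two minimal elements below $\MM$ admit a common lower bound in $\cP_\MM$; by minimality they must coincide.

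The principal obstacle is the overlapping case of the diamond. Without the even-degree hypothesis, the unmatched half-edges at an overlap vertex $v$ could have odd cardinality, which would prevent any legal re-pairing into a valid involution and block the construction of the completing switch. Carrying it out therefore requires an explicit sub-case analysis depending on whether $s_1$ and $s_2$ share one or two half-edges at $v$ and on the local structure of $\alpha$ around $v$, with the parity of $\deg(v)$ being precisely what guarantees that a compatible third switch exists in every subcase.
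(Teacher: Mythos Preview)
Your Newman-lemma strategy is reasonable in spirit, but the local-confluence argument has a genuine gap. In the disjoint case you assert that ``a direct check shows that $s_2$ remains component-increasing when applied after $s_1$''. This is false. Take $\MM$ to be the $4$-cycle on vertices $1,2,3,4$ with edges $12,23,34,41$ (all degrees equal to $2$). The two disjoint $2$-edge cuts $s_1=\{12,34\}$ and $s_2=\{23,41\}$ are both disconnecting on $\MM$. After performing $s_1$, the map $\MM_1$ has two components $\{1,4\}$ and $\{2,3\}$, each carrying a double edge. The edges $23$ and $41$ now lie in \emph{different} components of $\MM_1$, so the switch $s_2$ merges them and gives a connected map: $s_2$ is not component-increasing on $\MM_1$. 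The diamond does close in this example (both $\MM_1$ and $\MM_2$ descend to four isolated loops), but not by commuting $s_1$ and $s_2$; one needs genuinely new switches, and your argument does not produce them.

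Your overlapping case is also off target. Two switches share either zero or an entire edge of $\alpha$ (if they share a half-edge $a$ they share $\{a,\alpha(a)\}$), so ``sharing half-edges at a vertex $v$'' and ``re-pairing the uncommitted half-edges at $v$'' does not describe what happens; the even-degree hypothesis is not used at the level of a single overlap vertex. The place where evenness actually enters is more global: it guarantees Eulerian circuits, which in turn force the $2$-edge cuts to organise themselves in a tree-like way. This is how the paper proceeds: delete an arbitrary edge $e_0$, take an Eulerian path between the two resulting odd-degree vertices, and observe that the bridges $e_1,\dots,e_t$ along this path are exactly the partners of $e_0$ in $2$-edge cuts; they can be deleted in any order, and any other disconnecting pair lies entirely between two consecutive $e_i$'s, so the argument recurses into the pieces. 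If you want to salvage the confluence route, you will need to replace the false commutation claim by an argument that genuinely exploits this Eulerian structure (or an equivalent parity argument on cuts).
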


\begin{proof} 
    Since every vertex has even degree, each connected component admits an Eulerian circuit. In particular, no edge is a bridge. We may partition the vertices of the map into $3$-edge connected components. Hence, the connected components of the minimal map are in one-to-one correspondence with these $3$-edge connected components.
\end{proof}

\begin{remark}
    In the case of maps with all vertices of even degree, which will be the setup of the following subsection, by Lemma \ref{le:evenmin}, there is only one minimal map smaller than a given map $\MM$. The free cumulant of $\MM$ are then uniquely determined and can be written as a sum over all smaller maps,
$$
\kappa_{\MM}(x) = \sum_{\NN \leq \MM} \mu(\NN,\MM) \NN(x)
$$
\end{remark}

\subsection{Characterization of freeness with cumulants}

In this subsection, we have a finite disjoint collection $(\cA_c)_{c \in \cC}$ where $\cA_c $ is a set of elements in $\cup_{p} \cE_{2p}$. We set $\cA = \sqcup \cA_c$. We say that a subset of elements in $\cE$ is {\em even} if it is a subset of $\sqcup_{t} \cE_{2t}$.  
The main result of this subsection is an analog of Speicher's free cumulant Theorem,  see \cite[Section 5.3]{MR2760897} or \cite[Section 2]{MR2266879}.

\begin{theorem}[Freeness and free cumulants]\label{th:freekappa}
The even families $(\cA_c)_{c \in \cC}$ are free if and only if for all $\langle \cA \rangle $-map $(\MM,x)$ 
connected non-monochromatic, we have $\kappa_{\MM}(x)=0$. 
\end{theorem}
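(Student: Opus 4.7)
The proof adapts Speicher's cumulant characterization of freeness to our combinatorial-map setting. Both directions rest on the moment--cumulant inversion $\MM(x)=\sum_{\underline\MM\le\NN\le\MM}\kappa_\NN(x)$ from Lemma~\ref{le:defcumulant}, the multiplicative factorization $\kappa_\NN(x)=\prod_i\kappa_{\NN_i}(x)$ across connected components of $\NN$ (a consequence of (M) for cumulants), and the uniqueness statement of Lemma~\ref{le:individualdistrib}.

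For the direction $(\Leftarrow)$, I would fix $\MM$ satisfying the hypothesis of Definition~\ref{def:freeness} and aim for $\MM(x)=0$. The cumulant vanishing hypothesis together with the factorization collapse the moment--cumulant expansion to
\[
\MM(x)=\sum_{\substack{\NN\le\MM\\\text{all components of }\NN\text{ monochromatic}}}\prod_i\kappa_{\NN_i}(x).
\]
The crucial step is combinatorial: to each such $\NN$ I would associate a unique $\widehat\MM(\NN)$ above it, reached from $\MM$ by chromatic switches alone, below which $\NN$ is obtained by monochromatic switches alone. Partitioning the sum by $\widehat\MM(\NN)$ and invoking the classical moment--cumulant formula within each single color $\cA_c$, the inner sum over monochromatic refinements of $\widehat\MM$ reassembles as $\prod_i\widehat\MM_i(x)$, yielding $\MM(x)=\sum_{\widehat\MM}\prod_i\widehat\MM_i(x)$. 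By the freeness hypothesis each such $\widehat\MM$ has either a minimal non-monochromatic component (whose moment equals its cumulant and vanishes by hypothesis) or a monochromatic centered component (whose moment vanishes by definition), so each product is zero and $\MM(x)=0$.

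For the direction $(\Rightarrow)$, I would argue by strong induction on $|V(\MM)|$ that $\kappa_\MM(x)=0$ for every connected non-monochromatic $\langle\cA\rangle$-map $(\MM,x)$. When $\MM$ is itself minimal, it qualifies as its own $\widehat\MM$ in Definition~\ref{def:freeness} with a single minimal non-monochromatic component, so freeness gives $\kappa_\MM(x)=\MM(x)=0$. For the inductive step, the induction hypothesis together with the morphism property discard from the expansion all terms in which some component of $\NN\ne\MM$ is non-monochromatic, leaving
\[
\kappa_\MM(x)=\MM(x)-\sum_{\substack{\NN<\MM\\\text{all components of }\NN\text{ monochromatic}}}\prod_i\kappa_{\NN_i}(x).
\]
Iterating the recursive decomposition from the proof of Lemma~\ref{le:individualdistrib} (which repeatedly centers via property (Id) and uses freeness to kill the first term $\MM_\BB(x_1^0,\ldots,x_k^0)$) expresses $\MM(x)$ entirely in terms of monochromatic moments $\MM_l^\CL(x)$; expanding each such moment via the classical moment--cumulant identity within its color and regrouping via the bijection of $(\Leftarrow)$ shows that this representation coincides with the displayed sum on the right, forcing $\kappa_\MM(x)=0$.

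The main obstacle is the combinatorial bijection $\NN\mapsto\widehat\MM(\NN)$ shared by both directions: on any path $\MM\to\NN$ in the poset consisting of switches, one must show that chromatic and monochromatic switches can be canonically reordered so that all chromatic ones come first, yielding a well-defined intermediate $\widehat\MM(\NN)$ independent of the chosen path. This interchange argument will parallel in spirit the Eulerian-circuit reasoning behind Lemma~\ref{le:evenmin}, but with a color-aware refinement of the commutation relations between disjoint switches.
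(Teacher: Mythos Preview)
Your argument for both directions rests on assigning to each all-monochromatic $\NN\le\MM$ a \emph{unique} $\widehat\MM(\NN)$ satisfying (P1)--(P2) with $\NN\le\widehat\MM(\NN)$, so that the fibers partition the sum and each fiber reassembles via moment--cumulant into $\widehat\MM(x)$. This uniqueness fails already in the matrix case. Take $\MM$ the $4$-cycle with vertex colors $a,b,a,b$: the minimal map $\NN_0$ (four disjoint loops) is a $\widehat\MM$ in its own right, but it also sits below the $\widehat\MM$ consisting of loops at the two $b$-vertices together with a monochromatic melon on the two $a$-vertices (reached from $\MM$ by two chromatic switches through an intermediate loop-plus-triangle). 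Hence $\NN_0$ lies under several distinct $\widehat\MM$'s, the fibers overlap, and the set $\{\NN:\widehat\MM(\NN)=\widehat\MM_*\}$ is a strict subset of $\{\NN\le\widehat\MM_*\}$; it cannot recombine to $\widehat\MM_*(x)$. The local commutation of chromatic and monochromatic switches that you flag as the main obstacle does hold pairwise, but it does not produce a canonical global factorization of the path $\MM\to\NN$.

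The paper sidesteps this entirely. For $(\Leftarrow)$ it keeps your first reduction $\MM(x)=\sum_{\NN:\exists i,\,\NN\le\widehat\MM_i}\kappa_\NN(x)$, then applies inclusion--exclusion over the finite list $\widehat\MM_1,\dots,\widehat\MM_d$ to write this as a signed sum of $\bigl(\bigwedge_{j\in I}\widehat\MM_j\bigr)(x)$; the crucial input is Lemma~\ref{le:minchapeau}, showing that any such meet is again a $\widehat\MM$ and therefore has a centered component by hypothesis. For $(\Rightarrow)$ the paper does not reuse any bijection either: after centering the arguments via Lemma~\ref{le:centeredkappa} (cumulants containing an identity vanish) and the weak substitution Lemma~\ref{le:subcumulant}, it inducts on the number of connected components of the minimal map, with the inductive step driven by a merge-two-vertices identity for cumulants (Lemma~\ref{le:equacumu}) rather than by the recursion of Lemma~\ref{le:individualdistrib}. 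Your proposed term-by-term matching of the Lemma~\ref{le:individualdistrib} expansion against the cumulant sum would again require exactly the partition structure that is not available.
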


One implication (from right to left) of the theorem is rather straightforward; the other direction will require more arguments.

\paragraph{Join and meet. } We fix $\MM = (\pi, \alpha) \in \cM_0$ with even degree. Let $\MM_0 \leq \MM $ be  minimal (it is unique by Lemma \ref{le:evenmin}) and let $\cP_{\MM_0}$ be the poset of elements $\NN \in \cM_0$ such that $\MM_0 \leq \NN$.  

\begin{lemma}\label{le:joinandmeet}
For two maps $\MM_1, \MM_2 \in \cP_{\MM_0}$ with $\MM_i \leq \MM$, $i = 1,2$, there exists 
\begin{itemize}
    \item[(1)] {\em (join)} a unique smallest map $\MM_1 \vee \MM_2 \in \cP_{\MM_0}$ such that $\MM_1\leq \MM_1 \vee \MM_2$ and $\MM_2\leq \MM_1 \vee \MM_2$, and
    \item[(2)] {\em (meet)} a unique largest map $\MM_1 \wedge \MM_2 \in \cP_{\MM_0}$ such that $\MM_1 \wedge \MM_2 \leq \MM_1$ and $\MM_1 \wedge \MM_2 \leq \MM_2$.
\end{itemize}
\end{lemma}
\begin{proof}
(2) follows from the claim that two vertices are in the same connected component in $\MM_1 \wedge \MM_2$ if and only if they are in the same component in $\MM_1$ and in $\MM_2$. As in the proof of Lemma \ref{le:evenmin}, we may then use that disconnecting switches can be done in any order (for maps with even degrees).
\newline 
As for (1), since $\MM_1$ and $\MM_2$ are both smaller than $\MM$, we have
$$
\MM_1 \vee \MM_2 = \bigwedge \{ \NN \in \cP_{\MM_0} \vert \MM_1\leq \NN, \MM_2\leq \NN \}.
$$
The conclusion follows.
\end{proof}

Now, we further fix $x \in \cE_{\MM}$ with $x_v \in \langle \cA \rangle$ for all $v \in V(\MM)$ (in other words, $(\MM,x)$ is an $\langle \cA \rangle$-map). Let $\widehat \MM_1,\widehat \MM_2 \in \cP_{\MM_0}$ with $\widehat \MM_i \leq \MM$, $i = 1,2$.

\begin{lemma}\label{le:minchapeau}
If $\widehat \MM_1$ and $\widehat \MM_2$ both satisfy (P1)-(P2), then $\widehat \MM_1 \wedge \widehat \MM_2$ also satisfies (P1)-(P2).
\end{lemma}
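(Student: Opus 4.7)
My plan is to verify (P1) and then (P2) for $\NN := \widehat \MM_1 \wedge \widehat \MM_2$.

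For (P1), I would fix a connected component $C$ of $\NN$ and identify it, as a vertex set, with $C_1 \cap C_2$, where $C_i$ is the component of $\widehat \MM_i$ containing a chosen $v \in C$ (this description is immediate from the construction of the meet). If either $C_i$ is monochromatic then $C \subseteq C_i$ is monochromatic and (P1) holds trivially for $C$; the interesting case is when both $C_1$ and $C_2$ are minimal non-monochromatic. Here the key observation is that no disconnecting switch appearing along a chain $\widehat \MM_i \to \NN$ can involve two edges both inside $C_i$, since such a switch would disconnect $C_i$ and contradict the minimality of $C_i$ in $\widehat \MM_i$. Consequently the submap on the vertex set $C_i$ is preserved verbatim in $\NN$, so $C_i$ is itself a component of $\NN$. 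Since $C_1$ and $C_2$ are components of $\NN$ both containing $v$, they must coincide with $C$, and the unchanged submap structure makes $C$ minimal non-monochromatic in $\NN$.

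For (P2), if $\NN$ equals $\widehat \MM_1$ or $\widehat \MM_2$ the corresponding chromatic path given by (P2) works directly, so I would assume $\NN < \widehat \MM_1, \widehat \MM_2$ strictly. Let $P_i : \MM \to \widehat \MM_i$ be the chromatic paths supplied by the (P2) hypothesis for $i = 1, 2$. Using the even-degree commutativity of disconnecting switches provided by Lemma \ref{le:evenmin}, I would interleave the switches appearing in $P_1 \cup P_2$ into a single sequence of disconnecting switches terminating at $\NN$. Since the chromaticity of a switch depends only on the (fixed) colors of its incident vertices, every step of the interleaved sequence remains chromatic, yielding a chromatic path $\MM \to \NN$. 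To check the intermediate condition in (P2), I would argue by contradiction: if some intermediate $\MM' > \NN$ satisfied (P1), then no chromatic disconnecting switch could emanate from $\MM'$ at all, because its monochromatic components admit only monochromatic (hence non-chromatic) switches and its minimal non-monochromatic components admit no disconnecting switch whatsoever. This would contradict the fact that the very next switch of the interleaved sequence is a chromatic disconnecting switch at $\MM'$.

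The main obstacle will be the interleaving step in (P2): one must justify rigorously, on the basis of the even-degree commutativity of Lemma \ref{le:evenmin}, that combining $P_1$ and $P_2$ gives a genuine chromatic disconnecting sequence reaching exactly $\NN$. Once this is in place, excluding (P1) intermediates is a clean consequence of the observation that no (P1) state admits a chromatic disconnecting switch.
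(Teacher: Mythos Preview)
Your argument for (P1) is correct and essentially coincides with the paper's reduction: a minimal non-monochromatic component of either $\widehat\MM_i$ cannot be touched by any disconnecting switch along a chain down to $\NN$, hence survives intact as a component of $\NN$. Likewise, your observation that a (P1)-state admits no chromatic disconnecting switch (monochromatic components have no bichromatic edge, minimal ones have no disconnecting switch at all) is correct and cleanly handles the ``no intermediate satisfies (P1)'' clause --- \emph{provided} a chromatic disconnecting path to $\NN$ exists.

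The genuine gap is precisely the existence of that path. The interleaving of $P_1$ and $P_2$ is not well-defined: a switch in $P_2$ is specified by a pair of edges present at that moment of $P_2$, and those edges may have been destroyed by earlier switches from $P_1$; Lemma~\ref{le:evenmin} only gives order-independence of disconnecting switches \emph{toward the unique minimum}, not a recipe for merging two chains to distinct intermediate targets. Worse, your own ``no chromatic switch from a (P1)-state'' argument bites back: it forbids the path from ever reaching $\widehat\MM_1$ or $\widehat\MM_2$ en route to $\NN$, so any scheme that first executes one of $P_1,P_2$ and then continues is doomed, and you give no mechanism for a genuine interleave that avoids both. The paper resolves this by abandoning $P_1,P_2$ entirely and constructing a fresh chromatic path: an Eulerian circuit of $\MM$ groups the vertices into alternating monochromatic runs $A_1,B_1,\ldots,A_n,B_n$; each $\widehat\MM_i$ then corresponds to a non-crossing partition of these runs, the meet $\NN$ to the meet of those partitions, and a recursive ``peel off a singleton block via one chromatic switch'' procedure produces the required path. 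That structural input is the missing idea in your sketch.
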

\begin{proof}
We can restrict to the case where $\widehat \MM_1$ and $\widehat \MM_2$ have only monochromatic connected components because if there is a minimal non-monochromatic connected component in one of them, it will necessarily be also a connected component of $\MM_0$ and therefore of $\widehat \MM_2$ and $\widehat \MM_1 \wedge \widehat \MM_2$.
    
Firstly, it is immediate that $\widehat \MM_1 \wedge \widehat \MM_2$ satisfies (P1). Now fix an Eulerian circuit of $\MM$. Along this circuit, group consecutive vertices belonging to the same family into maximal monochromatic intervals, which we denote cyclically by $(C_1,C_2,\ldots,C_n)$. By maximality, $C_j$ and $C_{j+1}$ are never of the same color, where the indices are understood cyclically. By property (P2) we know that all vertices appearing in a same interval $C_j$ are in the same connected component of $\widehat \MM_1$ and of $\widehat \MM_2$. For $r\in\{1,2\}$, let $\PP_r$ be the partition of $(C_1,\ldots,C_n)$ defined by
$$ C_i\sim_{\PP_r}C_j \quad\Longleftrightarrow\quad C_i\text{ and }C_j \text{ belong to the same connected component of }\widehat\MM_r.$$
Condition (P2) implies that $\PP_1$ and $\PP_2$ are non-crossing partitions of $(C_1,\ldots,C_n)$, since if there is a crossing between two blocks, these two blocks must belong to the same $3$-edge component.  
Moreover, since the connected components of $\widehat\MM_r$ are monochromatic, every block of $\PP_r$ consists of intervals $C_j$ having the same color.

The meet $\widehat\MM_1\wedge\widehat\MM_2$ corresponds to the partition
$$\PP:=\PP_1\wedge\PP_2.$$
Equivalently, the blocks of $\PP$ are the non-empty intersections of a block of $\PP_1$ with a block of $\PP_2$. The meet of two non-crossing partitions is non-crossing. It remains to verify (P2). We describe the procedure to obtain this map from $\MM$ with only chromatic disconnecting switches. Moreover, this sequence may be chosen so that no proper intermediate map satisfies (P1).

We proceed by induction on $n$, the number of blocks of $\PP$. The assertion is immediate when $n=1$. Assume that $n\geq 2$. Every non-crossing partition contains an interval block. Since consecutive intervals $C_j$ have different colors whereas every block of $\PP$ is monochromatic, such an interval block must be a singleton. After a cyclic relabelling, we may assume that $\{C_1\}\in\PP$. Let
$$ e_{n,1} \quad\text{and}\quad e_{1,2} $$
be the two edges followed by the Eulerian circuit when it enters and leaves $C_1$. 
By maximality of $C_1$, we have that $C_n$ and $C_2$ are not of the same color as $C_1$. Let 
$$i := \mathrm{max}_{1\leq j \leq n} \{ j : C_j \overset{\PP}{\sim} C_2 \}$$
the largest index $j$ such that $C_j$ is in the block of $C_2$. If $i=n$ then switch the edge visited by the Eulerian circuit between $C_n$ and $C_1$ say $e_{n,1}$ with the one visited between $C_1$ and $C_2$ say $e_{1,2}$. This isolates $C_1$ and gives another non-crossing partition where we can apply the recurrence (as $C_n \overset{\PP}{\sim} C_2$). Otherwise $i<n$ and then switch the edge $e_{n,1}$ visited by the Eulerian circuit between $C_n$ and $C_1$ with $e_{i,i+1}$ visited between $C_i$ and $C_{i+1}$. This switch is chromatic. It is disconnecting because the switches $e_{n,1}$-$e_{1,2}$ and $e_{n,2}$-$e_{i,i+1}$ give three connected components and it would be at most two if the map is still connected after the switch $e_{n,1}$-$e_{i,i+1}$. We can then apply the recurrence in the two connected components. Indeed, by the non-crossing property of $\PP$, no block of $\PP$ meets both cyclic intervals $(C_1,\ldots,C_i)$ and $(C_{i+1},\ldots,C_n)$ since such a block would cross the block $B$, which contains both $C_2$ and $C_i$.

During the construction, distinct blocks of $\PP$ having the same color are never joined by a new (non-chromatic) edge: in the first case such an edge is created only when the two intervals belong to the same block, and in the second case both new edges are chromatic. Hence, unless the target partition $\PP$ has already been reached, at least one connected component contains several target blocks and is non-monochromatic. The recursive construction gives a further disconnecting chromatic switch inside this component, so it is not minimal. Thus no proper intermediate map satisfies (P1).
This completes the proof.

\end{proof}

\begin{proof}[Proof of Theorem \ref{th:freekappa} : proof of $\boxed{\Leftarrow}$]

Assume that $\kappa_{\MM}(x)=0$ for all $\langle \cA \rangle $-map $(\MM,x)$ 
connected non-monochromatic. Take an $\cA$-map $(\MM,x)$ 
such that all the associated $(\widehat \MM,x)$ satisfying (P1)-(P2) have a centered connected component. We denote these maps $\widehat \MM _1$,\ldots,$\widehat \MM _d$. On the one hand, if $\NN$ does not satisfy (P1) (i.e. there is no $\widehat \MM$ such that $\NN \leq \widehat \MM$), then $\kappa_{\NN}(x)=0$. Hence, we get
$$
\MM(x) = \sum_{\exists i : \NN \leq \widehat \MM_i } \kappa_{\NN} (x).
$$
On the other hand, we can show using Lemma \ref{le:minchapeau} that this remaining sum is a linear combination of the $\widehat \MM (x)$ which are all zero. We have by inclusion-exclusion principle,
\begin{align*}
    \MM(x) &= \sum_{\NN \leq \MM } \mathbf{1}_{\{ \exists i : \NN \leq \widehat \MM_i \}} \kappa_{\NN} (x) \\
    &= \sum_{\NN \leq \MM } \kappa_{\NN} (x) \sum_{l=1}^d (-1)^{l-1} \sum_{I \subset \{1,\cdots,d\} \atop \vert I \vert=l } \prod_{j \in I} \mathbf{1}_{\{ \NN \leq \widehat \MM_j \}} .
\end{align*}
Hence, we obtain
\begin{align*}
    \MM(x) = \sum_{l=1}^d (-1)^{l-1} \sum_{I \subset \{1,\cdots,d\} \atop \vert I \vert=l }  \underbrace{\sum_{\NN \leq \bigwedge_{j \in I} \widehat \MM_j } \kappa_{\NN} (x)}_{=(\bigwedge_{j \in I} \widehat \MM_j) (x)} .
\end{align*}
By Lemma \ref{le:minchapeau}, $\bigwedge_{j \in I} \widehat \MM_j$ satisfies (P1)-(P2), so it is centered. This gives $\MM(x)=0$ and thus the families $(\cA_c)$ are free.
\end{proof}

In order to prove the converse statement, we first prove the following Lemma \ref{le:equacumu} and Lemma \ref{le:centeredkappa}, which extend results known in free probability. We will adapt in our setting the proofs given in \cite[Section 5.3]{MR2760897} and \cite[Section 2]{MR2266879}.

\par Now, fix integers $1 \leq r < k$ and a partition $p$ of $\{1,\ldots ,k\}$ into $r$ blocks.  If $\underline{\MM}^k$ is a minimal map and $\MM^k = (\pi^k,\alpha)$ is a map in $\cP_{\underline{\MM}^k}$ with $k$ vertices, we can define a map $\MM^r = \MM^k / p = (\pi^r,\alpha)$ with $r$ vertices by concatenation of the cycles of $\pi$ in the same blocks of the partition $p$. More precisely, if $C_1 = (e_1,\ldots,e_{l_1})$, $C_2 = (e_{l_1+1},\ldots,e_{l_1 + l_2})$, \ldots, $ C_c = (e_{l_1 + \cdots + l_{c-1}+1},\ldots ,e_{l_1 + \cdots + l_c}) $ are the cycles of $\pi^k$ associated with a block of $p$ with $c$ elements ordered so that with $e_1 < e_{l_1+1} < \ldots$ and $e_{l_j +1} = \min (C_j)$, they are merged as a single cycle $(e_1,\ldots, e_{l_1 + \cdots + l_c})$ in $\pi^r$. We  also set $\underline{\MM}^r = \underline{\MM}^k / p$.

Now assume there is a map ${\tilde \MM} \in \cP_{\MM_0}$ such that ${\tilde \MM} = \MM^r \circ(\mathfrak g_1, \ldots, \mathfrak g_r) = \MM^k \circ(\MM_1, \ldots, \MM_k) $ where $\MM^k$ and $\MM^r = \MM^k/p$ are as above and $\MM_{j}$, $\mathfrak{g_j}$ are maps with boundaries of the proper size. For each $\underline{\MM}^r \leq \MM \leq \MM^r$ (with $r$ vertices), $\MM = (\pi^r , \beta)$, there is a unique map $\MM^* = ( \pi^k , \beta^*)$ satisfying $\underline{\MM}^k \leq \MM^* \leq \MM^k$ (with $k$ vertices) such that $\MM \circ(\mathfrak g_1, \ldots, \mathfrak g_r) = \MM^* \circ(\MM_1, \ldots, \MM_k) $. It is given by $\beta^*=\theta \beta \theta$ where $\theta$ is the involution given by $\theta_{\vert C_j} = \theta_j$ where $\mathfrak g_j = (\pi^k_{\vert C_j}, \theta_j)$.

The application $\cdot^* : \cP_{\underline{\MM}^r} \to \cP_{(\underline{\MM}^r)^*}$ is an embedding of partially ordered sets which preserves the partial order. Moreover, for every
$\underline{\MM}^r \leq \NN \leq \MM$, its restriction induces an isomorphism
of intervals $[\NN,\MM]_{\cP_{\underline{\MM}^r}}\simeq [\NN^*,\MM^*]_{\cP_{(\underline{\MM}^r)^*}}$, by the definition of $\beta^*$. Since the Möbius function of a locally finite poset depends only on the
isomorphism type of the corresponding interval, see for instance \cite[Proposition 3.6.2]{zbMATH06016068}, we obtain
\begin{equation}\label{eq:mustar}
    \mu(\NN^*,\MM^*)=\mu(\NN,\MM).
\end{equation}
An important remark is that we have $(\MM^r)^*=\MM^k$ but in general $(\underline{\MM}^r)^*\neq \underline{\MM}^k$. The following detailed example shows it.

\begin{example}
    We take ${\tilde \MM} = \MM^4 = (\pi^4, \alpha)$ with 
$$ \pi^4 = (1,2) (3,4,5,6) (7, 8)(9, 10)$$
$$ \alpha = (1, 3)(2, 7)(4, 8)(5, 9)(6, 10)$$
and then $\MM_1=\ST_2$, $\MM_2=\ST_4$, $\MM_3=\ST_2$, $\MM_4=\ST_2$ all star maps. The maps $\MM^4$ and $$\underline{\MM}^4=(\pi^4,\beta=(1, 2)(3, 4)(5, 6)(7, 8)(9, 10))$$ are given in Figure \ref{fig:map_mk}.
Then we take the partition $p=\{ \{1,2\}, \{3\}, \{4\} \}$ and $\mathfrak g_1 = ((1, 2)(3, 4, 5, 6), \theta_1=(1, 3)(2)(4)(5)(6))$ (a map $\ST_2$ and a map $\ST_4$ linked by one of their edges), $\mathfrak g_2= \MM_3 = \ST_2$, $\mathfrak g_3 = \MM_4 = \ST_2$. Then we have
$$ \pi^3 = (1, 2, 3, 4, 5, 6) (7, 8)(9, 10)$$
and hence $\MM^3=(\pi^3,\alpha)$, $\underline{\MM}^3=(\pi^3,\beta)$ are given in Figure \ref{fig:map_mr}.
\begin{figure}[h!]
\begin{minipage}[c]{.46\linewidth} 
      \begin{tikzpicture}[scale=0.5]
    \filldraw[gray] (1.5,0) circle (3pt);
    \filldraw[gray] (0,-2) circle (3pt);
    \filldraw[gray] (3,-2) circle (3pt);
    \filldraw[gray] (3.5,0) circle (3pt);
    \draw (1.5,0) -- (0,-2);
    \draw (1.5,0) .. controls (2,0.5) and (3,0.5) .. (3.5,0);
    \draw (1.5,0) .. controls (2,-0.5) and (3,-0.5) .. (3.5,0);
    \draw (0,-2) -- (3,-2) ;
    \draw (1.5,0) -- (3,-2) ;

    \filldraw[gray] (7.5,0) circle (3pt);
    \filldraw[gray] (6,-2) circle (3pt);
    \filldraw[gray] (9,-2) circle (3pt);
    \filldraw[gray] (9.5,0) circle (3pt);
    \draw (7.5,0) .. controls (6.5,1.5) and (8.5,1.5) .. (7.5,0);
    \draw (7.5,0) .. controls (6.5,-1.5) and (8.5,-1.5) .. (7.5,0);
    \draw (6,-2) .. controls (5,-0.5) and (7,-0.5) .. (6,-2);
    \draw (9,-2) .. controls (8,-0.5) and (10,-0.5) .. (9,-2);
    \draw (9.5,0) .. controls (8.5,1.5) and (10.5,1.5) .. (9.5,0);
\end{tikzpicture}
\caption{Maps $\MM^4$ and $\underline{\MM}^4$.}
\label{fig:map_mk}
   \end{minipage} \hfill 
   \begin{minipage}[c]{.46\linewidth} 
      \begin{tikzpicture}[scale=0.5]
    \filldraw[gray] (1.5,0) circle (3pt);
    \filldraw[gray] (3,-2) circle (3pt);
    \filldraw[gray] (3.5,0) circle (3pt);
    \draw (1.5,0) .. controls (0,1) and (0,-1) .. (1.5,0);
    \draw (1.5,0) .. controls (2,0.5) and (3,0.5) .. (3.5,0);
    \draw (1.5,0) .. controls (2,-0.5) and (3,-0.5) .. (3.5,0);
    \draw (1.5,0) .. controls (1.5,-1) and (2,-2) .. (3,-2) ;
    \draw (1.5,0) -- (3,-2) ;

    \filldraw[gray] (7.5,0) circle (3pt);
    \filldraw[gray] (9,-2) circle (3pt);
    \filldraw[gray] (9.5,0) circle (3pt);
    \draw (7.5,0) .. controls (6.5,1.5) and (8.5,1.5) .. (7.5,0);
    \draw (7.5,0) .. controls (6.5,-1.5) and (8.5,-1.5) .. (7.5,0);
    \draw (7.5,0) .. controls (6,1) and (6,-1) .. (7.5,0);
    \draw (9,-2) .. controls (8,-0.5) and (10,-0.5) .. (9,-2);
    \draw (9.5,0) .. controls (8.5,1.5) and (10.5,1.5) .. (9.5,0);
\end{tikzpicture}
\caption{Maps $\MM^3$ and $\underline{\MM}^3$.} \label{fig:map_mr} 
   \end{minipage}
\end{figure}
\par Finally, we can compute here
$$ (\underline{\MM}^3)^*= (\pi^4, \theta \beta \theta = (1, 4)(2, 3)(5, 6)(7, 8)(9, 10)). $$
\begin{figure}[h!]
    \centering
    \begin{tikzpicture}[scale=0.5]
    \filldraw[gray] (7.5,0) circle (3pt);
    \filldraw[gray] (6,-2) circle (3pt);
    \filldraw[gray] (9,-2) circle (3pt);
    \filldraw[gray] (9.5,0) circle (3pt);
    \draw (7.5,0) .. controls (6.5,1.5) and (8.5,1.5) .. (7.5,0);
    \draw (7.5,0) .. controls (6.5,0) and (6,-1) .. (6,-2);
    \draw (7.5,0) .. controls (7.5,-1) and (7,-2) .. (6,-2);
    \draw (9,-2) .. controls (8,-0.5) and (10,-0.5) .. (9,-2);
    \draw (9.5,0) .. controls (8.5,1.5) and (10.5,1.5) .. (9.5,0);
\end{tikzpicture}
\caption{Map $(\underline{\MM}^3)^*$.} \label{fig:map_mrstar}
\end{figure}
We find $(\underline{\MM}^3)^*\neq \underline{\MM}^4$.
\end{example}

The next lemma asserts that the application $\cdot^*$ allows to express $\kappa_\MM$ with $\MM \in  \cP_{\underline{\MM}^r} $ in terms of linear combinations of $\kappa_{\MM'}$ with $\MM' \in \cP_{(\underline{\MM}^r)^*} $. 

\begin{lemma}\label{le:equacumu}
Let ${\tilde \MM} \in \cP_{\MM_0}$ such that ${\tilde \MM} = \MM^r \circ(\mathfrak g_1, \ldots, \mathfrak g_r) = \MM^k \circ(\MM_1, \ldots, \MM_k) $ where $\MM^k$ and $\MM^r = \MM^k/p$ are as above and $\MM_{j}$, $\mathfrak{g_j}$ are maps with boundaries of the proper size. Then for all $\MM\leq \MM^r$ and $x \in \cE_{\MM_0}$, we have 
$$
\kappa_{\MM}(\mathfrak{g}_1(x),\ldots,\mathfrak{g}_r(x))= \sum_{(\underline{\MM}^r)^* \leq \MM'\leq \MM^* \atop \MM' \vee (\underline{\MM}^r)^* =\MM^* }  \kappa_{\MM'}(\MM_1(x),\ldots,\MM_k(x)).
$$
\end{lemma}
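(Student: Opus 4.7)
The plan is to mimic Speicher's classical derivation of the ``cumulants of products'' formula \cite[Theorem 11.12]{MR2760897}, adapted to the present map-theoretic setting. The proof will hinge on three ingredients established in the discussion preceding the statement: the substitution property (S), the Moebius inversion defining $\kappa_\MM$ from Lemma \ref{le:defcumulant}, and the compatibility $\mu(\NN,\MM)=\mu(\NN^*,\MM^*)$ of the embedding $\cdot^*$ with the Moebius function.

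The first move is to apply (S): for every $\NN$ in the interval $[\underline{\MM}^r,\MM]$ of $\cP_{\underline{\MM}^r}$,
$$
\NN(\mathfrak{g}_1(x),\ldots,\mathfrak{g}_r(x)) = [\NN \circ (\mathfrak{g}_1,\ldots,\mathfrak{g}_r)](x) = [\NN^* \circ (\MM_1,\ldots,\MM_k)](x) = \NN^*(\MM_1(x),\ldots,\MM_k(x))
$$
by the very definition of $\NN^*$. Substituting this into the Moebius expansion of $\kappa_\MM$, using $\mu(\NN,\MM)=\mu(\NN^*,\MM^*)$, and relabelling via the order-preserving injectivity of $\cdot^*$ from $[\underline{\MM}^r,\MM]$ into $[(\underline{\MM}^r)^*,\MM^*]$, one obtains
$$
\kappa_\MM(\mathfrak{g}_1(x),\ldots,\mathfrak{g}_r(x)) = \sum_{\MM' \in \mathrm{Im}(\cdot^*),\, \MM' \leq \MM^*} \mu(\MM',\MM^*)\, \MM'(\MM_1(x),\ldots,\MM_k(x)).
$$
On the other side, each $\kappa_{\MM'}$ appearing on the right-hand side of the statement is expanded by Moebius inversion and the order of summation is swapped, so that the claim reduces to the pointwise identity
$$
\sum_{\substack{\NN' \leq \MM' \leq \MM^*\\ \MM' \vee (\underline{\MM}^r)^* = \MM^*}} \mu(\NN',\MM') \;=\; \mathbf{1}_{\{\NN' \in \mathrm{Im}(\cdot^*)\}}\, \mu(\NN',\MM^*)
$$
for every $\NN' \in [(\underline{\MM}^r)^*, \MM^*]$, together with the vanishing of both sides when $\NN'$ falls outside this interval.

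The main obstacle will be this combinatorial identity on the Moebius function of the subposet $[(\underline{\MM}^r)^*,\MM^*]$ of $\cP_{\underline{\MM}^k}$. It is the direct analog of the Rota-type complementation identity used by Speicher in the non-crossing partition lattice, and my strategy is twofold: first, characterize $\mathrm{Im}(\cdot^*) \cap [(\underline{\MM}^r)^*,\MM^*]$ as the set of fixed points of the closure operator $\MM' \mapsto \MM' \vee (\underline{\MM}^r)^*$, using the existence of joins and meets in $[(\underline{\MM}^r)^*,\MM^*]$ established just before Theorem \ref{th:freekappa}; second, apply the classical Moebius cancellation over complements of $(\underline{\MM}^r)^*$ inside the sublattice $[\NN',\MM^*]$. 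The identification of the image with the fixed-point set in the first step is where the specifics of our map setting genuinely enter, since the involution recombination $\beta \mapsto \theta \beta \theta$ that defines $\cdot^*$ must be shown to correspond precisely to closure under join with $(\underline{\MM}^r)^*$; this is the step that would require the most care in a complete write-up.
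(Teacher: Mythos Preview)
Your opening moves match the paper's exactly: Moebius-expand $\kappa_\MM$, use substitution (S) to rewrite each trace via $\NN^*$, and invoke $\mu(\NN,\MM)=\mu(\NN^*,\MM^*)$. The divergence comes right after. You keep an indicator $\mathbf{1}_{\{\NN'\in\mathrm{Im}(\cdot^*)\}}$, expand the \emph{right-hand side} cumulants into traces, and try to match coefficients; this reduces the lemma to a Rota-type identity that you then propose to prove via a closure operator and complementation. The paper does the opposite and much simpler thing: having reached
\[
\kappa_{\MM}(\mathfrak g_1(x),\ldots,\mathfrak g_r(x))=\sum_{(\underline{\MM}^r)^*\leq \NN\leq \MM^*}\mu(\NN,\MM^*)\,\NN(\MM_1(x),\ldots,\MM_k(x)),
\]
it expands each \emph{trace} $\NN(\ldots)$ as $\sum_{\MM'\leq \NN}\kappa_{\MM'}(\ldots)$, swaps the two sums, and applies the elementary identity $\sum_{a\leq \NN\leq b}\mu(\NN,b)=\delta_{a=b}$ with $a=\MM'\vee(\underline{\MM}^r)^*$ and $b=\MM^*$. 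No closure operator, no complementation.

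Your caution about $\mathrm{Im}(\cdot^*)$ is also unnecessary and is what sends you down the longer road. Since $\beta^*=\theta\beta\theta$ with $\theta$ an involution, the map $\cdot^*$ is its own inverse and hence an order isomorphism from $[\underline{\MM}^r,\MM]$ onto $[(\underline{\MM}^r)^*,\MM^*]$; the paper uses this (implicitly) in the relabelling step. Once you accept surjectivity, your proposed identity collapses: for $\NN'\in[(\underline{\MM}^r)^*,\MM^*]$ the indicator is $1$, and the constraint $\MM'\vee(\underline{\MM}^r)^*=\MM^*$ together with $\MM'\geq \NN'\geq(\underline{\MM}^r)^*$ forces $\MM'=\MM^*$, so both sides equal $\mu(\NN',\MM^*)$. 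So your route is not wrong, but the ``main obstacle'' you flag dissolves, and the two-step closure/complementation strategy is never needed.
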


\begin{proof} 
    Let $\MM\leq \MM^r$. First, by Equation \eqref{eq:mustar}
\begin{align*}
    \kappa_{\MM}(\mathfrak{g}_1(x),\ldots,\mathfrak{g}_r(x)) &= \sum_{\underline{\MM}^r \leq \MM'\leq \MM} \MM'(\mathfrak{g}_1(x),\ldots,\mathfrak{g}_r(x)) \mu(\MM',\MM) \\
    &= \sum_{\underline{\MM}^r \leq \MM' \leq \MM} \MM'(\mathfrak{g}_1(x),\ldots,\mathfrak{g}_r(x)) \mu(\MM'^*,\MM^*) ,
\end{align*}
and this gives
\begin{align*}
    \kappa_{\MM}(\mathfrak{g}_1(x),\ldots,\mathfrak{g}_r(x))
    &= \sum_{(\underline{\MM}^r)^* \leq \NN \leq \MM^* } \NN(\MM_1(x),\ldots,\MM_k(x)) \mu(\NN,\MM^*)  \\
    &= \sum_{(\underline{\MM}^r)^* \leq \NN \leq \MM^* } \sum_{\MM'\leq \NN} \kappa_{\MM'}(\MM_1(x),\ldots,\MM_k(x)) \mu(\NN,\MM^*) .
\end{align*}
Then exchanging the two sums, we get
\begin{align*}
    \kappa_{\MM}(\mathfrak{g}_1(x),\ldots,\mathfrak{g}_r(x))
    &= \sum_{\underline{\MM}^k \leq \MM'\leq \MM^*} \kappa_{\MM'}(\MM_1(x),\ldots,\MM_k(x)) \underbrace{\sum_{(\underline{\MM}^r)^* \vee \MM' \leq \NN \leq \MM^* } \mu(\NN,\MM^*)}_{=\delta_{(\underline{\MM}^r)^* \vee \MM'=\MM^*}}  \\
    &= \sum_{\underline{\MM}^k \leq \MM'\leq \MM^* \atop \MM' \vee (\underline{\MM}^r)^* =\MM^* }  \kappa_{\MM'}(\MM_1(x),\ldots,\MM_k(x)).
\end{align*}
This achieves the proof.
\end{proof}

The next lemma asserts that $\kappa_{\MM}(x) = 0$ when a coordinate of $x$ is equal to the identity as soon as a minimal map of $\MM$ isolates this vertex.

\begin{lemma}\label{le:centeredkappa}
    Let $\MM$ be a connected map in $\cP_{\MM_0}$ with at least two vertices. If, for some integer $t \geq 1$, one connected component of $\MM_0$ is a bouquet map $\BB^\tau_{2t}$ at vertex $v$ for some pairing $\tau$, then for any $x \in \cE_{\MM}$ such that $x_v = 1^\tau_{2t}$, we have  $\kappa_{\MM}(x)=0$.
\end{lemma}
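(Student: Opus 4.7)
The plan is to proceed by strong induction on $k = |V(\MM)|$. First, write $\MM_0$ as a disjoint union of its components, with the first being $\BB_{2t}$ at $v$. The decomposition~\eqref{eq:unionmap} gives $\MM = \MM_\BB \circ (\MM_1, \ldots, \MM_r)$ with $\MM_1 = \ST_{2t}$ since the first component is a single vertex, so by (S) we have $\MM_1(x) = x_v$. Lemma~\ref{le:subcumulant} then yields
\[
\kappa_\MM(1_{2t}, x') = \kappa_{\MM_\BB}\bigl(1_{2t}, \MM_2(x), \ldots, \MM_r(x)\bigr),
\]
which reduces the problem to the case $\MM \in \cMb$, where every vertex has even degree and, by Lemma~\ref{le:evenmin}, the minimal map $\underline\MM = \MM_0$ is unique and still has the untwisted $\BB_{2t}$ at $v$.

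Assume the lemma for connected maps with between $2$ and $k-1$ vertices. Property (Id) gives $\MM(1_{2t}, x') = \MM^{(v)}(x')$, where $\MM^{(v)}$ is the $(k-1)$-vertex map produced by the canonical rewiring of $\alpha(\partial v) = (e_1, \ldots, e_{2t})$ as $(e_1,e_2)(e_3,e_4)\cdots$. Removing $v$ preserves the parities of the other vertex degrees, so $\MM^{(v)}$ has a unique minimal map by Lemma~\ref{le:evenmin}, which one verifies equals $\underline\MM$ with the $\BB_{2t}$-component removed. Expanding both sides via the defining Moebius identity and using the morphism property (M) of Lemma~\ref{le:defcumulant} to factor each $\kappa_\NN$ over the connected components of $\NN$, the factor $\kappa_{\NN_v}(1_{2t},\ldots)$ corresponding to the component $\NN_v$ containing $v$ falls in one of three cases: (i) $\NN_v$ is the single vertex $v$, necessarily $\BB_{2t}$ since $\NN \geq \underline\MM$ forbids a twisted bouquet there, so the factor equals $\BB_{2t}(1_{2t}) = 1$; (ii) $2 \leq |V(\NN_v)| < k$, in which case the induction hypothesis annihilates the factor; (iii) $\NN = \MM$, which isolates the target $\kappa_\MM(1_{2t}, x')$.

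The identity therefore collapses to
\[
\kappa_\MM(1_{2t}, x') \;+\; \sum_{\substack{\NN \in [\underline\MM,\MM] \\ v \text{ isolated in } \NN}} \kappa_{\NN \setminus v}(x') \;=\; \sum_{\underline{\MM^{(v)}} \leq \NN' \leq \MM^{(v)}} \kappa_{\NN'}(x').
\]
To conclude that $\kappa_\MM(1_{2t}, x') = 0$, I would exhibit a bijection $\NN \mapsto \NN \setminus v$ between $\{\NN \in [\underline\MM,\MM] : v \text{ isolated}\}$ and $[\underline{\MM^{(v)}}, \MM^{(v)}]$, which makes the two remaining sums equal. The main obstacle is this bijection: the key point is that the canonical rewiring of (Id) is precisely the net effect on $V \setminus v$ of the (essentially unique) disconnecting-switch sequence that isolates $v$ in its untwisted bouquet, which establishes $\BB_{2t} \sqcup \MM^{(v)} \leq \MM$; afterwards, any disconnecting switch in $[\underline{\MM^{(v)}}, \MM^{(v)}]$ uses only edges of $V \setminus v$ and, by the Eulerian-circuit characterization in the proof of Lemma~\ref{le:evenmin}, lifts verbatim to a disconnecting switch in $[\underline\MM, \BB_{2t} \sqcup \MM^{(v)}]$ preserving the isolation of $v$, and conversely. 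The base case $k = 2$ is immediate: the interval $[\underline\MM, \MM]$ contains only $\underline\MM$ and $\MM$ with $\mu(\underline\MM, \MM) = -1$, and $\MM^{(v)}$ is directly identified with the $w$-component of $\underline\MM$, producing the cancellation by hand.
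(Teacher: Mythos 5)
Your proposal is correct and follows essentially the same route as the paper's proof: induction on the number of vertices, removal of $v$ via property (Id), annihilation of the intermediate cumulants through property (M) and the induction hypothesis (noting that every $\NN<\MM$ is disconnected, so the component of $v$ has fewer than $k$ vertices), and identification of the two remaining sums. The only differences are presentational: you add a preliminary reduction to $\cMb$ via Lemma~\ref{le:subcumulant}, and you make explicit the bijection $\NN\mapsto\NN\setminus v$ between $\{\NN\in[\underline\MM,\MM]: v\ \text{isolated}\}$ and $[\underline{\MM^{(v)}},\MM^{(v)}]$ that the paper uses implicitly when it writes $\sum_{\MM_0\leq\NN'\leq\MM'}\kappa_{\NN'}(x)=\MM'(x)$.
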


\begin{proof}
Let $k = |V(\MM)| \geq 2$.
    We prove the result by induction on $k\geq 2$. In the initial case $k=2$, since $\MM \ne \MM_0$, we have $\MM =\MM_{\BB}\circ (\MM_{1},\MM_2)$, with $\MM_j \in \cM_2$, $\MM_1^{\CL}=\BB^\tau_{2t}$ for some $t\geq 1$ is attached to vertex $v$ and $\MM_\BB \in \cM_0$ is a melon of order $2$. Then by (S)-(Id),
    $$
    \kappa_{\MM}(x)=\MM(x)-\MM^{\CL}_1(1^\tau_{2t}) \MM^{\CL}_2(x)= \MM^{\CL}_2(x)- 1.\MM^{\CL}_2(x) = 0.
    $$

    Now assume the statement is true for all maps with $2 \leq l<k$ vertices. By Lemma \ref{le:defcumulant}, we have
    $$
    \MM(x) = \kappa_{\MM}(x) + \sum_{\MM_0 \leq \NN < \MM } \kappa_{\NN}(x).
    $$
    From the induction hypothesis $\kappa_{\NN}(x) = 0$ for all $\NN$ such that $v$ is not isolated in $\NN$. If $v$ is isolated in $\NN \leq \MM$, then denote $\NN'$ the restriction of the map $\NN$ to $V(\MM) \backslash \{v\}$ and $x' = (x_u)_{u \ne v}$. 
    We get from property (M), 
    $ \kappa_{\NN}(x) = \kappa_{\BB^\tau_{2t}}(1^\tau_{2t})\kappa_{\NN'}(x') = \BB^\tau_{2t}(1^\tau_{2t}) \kappa_{\NN'}(x') =  \kappa_{\NN'}(x')$. 
    Also, denote $\MM'$ the map on $V(\MM)\backslash \{v\}$ obtained by removing vertex $v$ and matching the corresponding boundary edges according to $\tau$. 
    Necessarily, $\MM'$ has $\gamma(\MM)+t-1$ connected components, since $\MM_0$ contains the bouquet map, and by definition of the poset.
    Then, we have from property (Id), $\MM(x) = \MM'(x')$.
    We deduce from Lemma \ref{le:defcumulant}, 
$$
\MM'(x') = \MM(x) =  \kappa_{\MM}(x) + \sum_{\MM_0 \leq \NN' \leq \MM'}\kappa_{\NN}(x) = \kappa_{\MM}(x) + \MM'(x').
$$ 
Hence $\kappa_{\MM}(x)=0$ as desired. 
\end{proof}

\begin{proof}[Proof of Theorem \ref{th:freekappa} : proof of \boxed{\Rightarrow}] We assume that the families $(\cA_c)_{c \in \cC}$ are free. Let $(\MM,x)$ be a connected non-monochromatic $\langle \cA \rangle $-map. We  write $\MM = \MM_{\BB} \circ (\MM_1,\ldots,\MM_k)$ where $\MM_{\BB} \in \cM_{0}$, $\MM_j \in \cM_{2t_j}$ and $\MM_j^{\CL}$ are the connected components of the minimal map $\MM_0$. For each $j \in J_1 \subset \INT{k}$, $(\MM^{\CL}_j,x) $ is minimal monochromatic, so that $\MM_j(x) \in \langle \cA_{c_j}\rangle$ for some $c_j$. For each $j \in J_2 = \INT{k} \backslash J_1$, $(\MM_j^{\CL},x)$ is minimal non-monochromatic. From the (S) property in Lemma \ref{le:subcumulant}, we have 
$$
\kappa_{\MM}(x) =  \kappa_{\MM_{\BB}}( \MM_1(x),\ldots, \MM_k(x)) = \kappa_{{\tilde \MM}} (x'),
$$
where ${\tilde \MM} = \MM_{\BB} \circ (\NN_1 , \ldots, \NN_k) \in \cP_{\NN_0}$ where $\NN_j = \ST_{2t_j}$ for $j \in J_1$, $\NN_j = \MM_j$ for $j \in J_2$, $\NN_0$ is the minimal map with connected components $\NN_j^{\CL}$, $x'_j = \MM_j (x)$ for a vertex $j \in J_1$ and $x'_v = x_v$ for a vertex $v \in V_{\MM_j}$, $j \in J_2$.
Next, using Lemma \ref{le:centeredkappa} and property (L), we deduce
$$
\kappa_{\MM}(x) = \kappa_{{\tilde \MM}}(y),
$$
where for $j \in J_1$,  $y_j = \MM_j(x) - \MM_j^{\CL} (x) 1_{2t_j}  \in \cE_{2t_j}\cap \langle \cA_{c_j} \rangle $ while for $j \in J_2$, $v \in V_{\MM_j}$, $y_v = x_v$.

To prove that $\kappa_{{\tilde \MM}}(y) = 0$, we start with a first case on $y$: we assume that there is a non-monochromatic connected component in $\NN_0$. Then we write 
$$
\kappa_{{\tilde \MM}}(y) =  \sum_{\NN_0 \leq \NN \leq {\tilde \MM}} \mu( \NN,{\tilde \MM}) \NN(y). 
$$
By definition of freeness, in the above sum we have $\NN (y)  = 0$ since in this first case, we have that $\widehat \NN$ must have a minimal non-monochromatic connected component. Hence $\kappa_{{\tilde \MM}}(y) = 0$ in this case.

For the general case, the proof is by induction on $k$. We assume that $\kappa_{\MM } (x) = \kappa_{{\tilde \MM}} (y) = 0$ for all connected non-monochromatic $(\MM,x) \in \cP_{\MM_0}(\langle \cA \rangle)$ where $\MM_0$ has $k$ connected components. For the initial step $k = 1$, $\MM$ is minimal non-monochromatic and thus $\MM(x) = \kappa_{\MM} (x) = 0$ by the definition of freeness.

For the inductive step, from what precedes, it suffices to consider the complementary of the first case where $\NN_0$ has only monochromatic connected components. If these connected components are all in different families $\cA_c$ then $\widehat{\tilde \MM} = \NN_0$ and $\kappa_{{\tilde \MM}} (y)=0$ as all the maps involved in the sum are zero by freeness. Otherwise, there are two vertices, say $j_1,j_2$ in $J_1$, which are in the same family, $c_{j_1} = c_{j_2}$, and connected by an edge in ${\tilde \MM}$ such that this edge is part of a switch which disconnects ${\tilde \MM}$. For ease of notation, we may assume that $j_1 = 1$ and $j_2 = 2$. We consider the partition $p$ on ${{\tilde \MM}}$ with $|V_{{\tilde \MM}}|-1$ blocks where all vertices are a singleton except $\{1,2\}$ forming a block of size $2$, which we call block $1$. 

We then apply Lemma \ref{le:equacumu} with $k = r +1 = |V_{{\tilde \MM}}|$, $\MM^k = \MM_\BB$, $\MM^r = \NN_{\BB} = \MM_{\BB} / p$, $\MM_j = \NN_j, (\mathfrak g_2,\ldots, \mathfrak{g} _{k-1} )  = (\NN_3,\ldots, \NN_k)$ and $\mathfrak g_{1}$ being two star maps connected by a single edge. We find
$$
\kappa_{\NN_{\BB}} (\mathfrak g_{1}  (y_{1},y_{2} ), y_3, \ldots ) =  \kappa_{{\tilde \MM}} (y) + \sum_{\underline{\MM}_{\BB} \leq \MM' < \MM_{\BB}  \atop \MM' \vee (\underline{\NN}_{\BB})^* =\MM_\BB } \kappa_{\MM'} (y).
$$
The first term on the left-hand side is zero by the recursion hypothesis. We now prove that every term in the sum on the right-hand side vanishes. By definition, $(\underline{\NN}_{\BB})^*$ is the map where all vertices are isolated except $1$ and $2$. Hence any $\MM'$ contributing on the right-hand side is a map with two connected components, $1$ and $2$ being in different connected components. Recall that $1$ and $2$ were chosen in the same family. Since $({\tilde \MM},y)$ is non-monochromatic, at least one of the two connected components of $(\MM' \circ ( \NN_1, \ldots, \NN_k),y)$ is also non-monochromatic, and it has strictly fewer minimal connected components than $\tilde \MM$, since the other component contains one of the vertices $1$ and $2$. By the recursion hypothesis and morphism property (M), it follows that $\kappa_{\MM'}(y) = 0$. Therefore $\kappa_{{\tilde \MM}} (y) = 0$. This concludes the proof.\end{proof}

\subsection{Application to associativity of freeness}

In the previous general section on freeness, we have left aside an important question: if $\cA_{1}$, $\cA_2$ and $\cA_3$ are free, is it true that $\cA_1$ and $\cA_2 \cup \cA_3$ are free? For even families, the free cumulants give an immediate positive answer.

\begin{corollary}[Associativity of freeness]
Let  $(\cA_{1}, \cA_2, \cA_3)$ be free and even families. Then $(\cA_1, \cA_2 \cup \cA_3)$ are free.
\end{corollary}

\begin{proof}
Let $\cA = \cA_1 \sqcup \cA_2 \sqcup \cA_3$, $\cA_{4} = \cA_2 \cup \cA_3$ and $\cA' = \cA_{1} \sqcup \cA_{4}$. By Theorem \ref{th:freekappa}, we should prove that for any $\langle \cA' \rangle$-map $(\MM,x)$ connected non-monochromatic (for the colors $\{ 1, 4\}$), we have $\kappa_{\MM}(x) = 0$. We write any element in $\langle \cA_4 \rangle$ as the sum of two elements in $\cA_2$ and $\cA_3$. Then, by the multi-linearity property of the cumulants, we can develop the cumulant $\kappa_{\MM}(x) $ into a sum of cumulants of $\langle \cA \rangle$-maps $(\MM_j, x_j)$. It remains to apply Theorem \ref{th:freekappa} for $(\cA_{1}, \cA_2, \cA_3)$. 
\end{proof}

\subsection{Application to sum of free elements}

In this subsection, we give the classical illustration of cumulants to prove a free central limit theorem. Similar results appear in \cite{kunisky2024tensor,collins2025freecumulantsfreenessunitarily} in a slightly different setting.

For short notation, if $\MM \in \cM_0$ is a $p$-regular map (all vertices have degree $p$) and $a \in \cE_p$, we set $\MM(a) = \MM(a,\ldots,a)$ and $\kappa_{\MM}(a) = \kappa_{\MM}(a,\ldots,a)$ (all vertices are colored with $a$). We say that $a \in \cE_p$ is {\em centered} if $p$ is odd or, for $p$ even, $\MM(a) = 0$ for all $p$-regular maps $\MM$ with exactly $1$ vertex (that is bouquet maps with permuted matching of directed edges). As above, for a given $p$ and $\sigma \in \mathrm{S}_p $ the melon $\FF^{\sigma}_{p}$ (or Frobenius pair) is the map with $2$ vertices where the two vertices say $v_1,v_2$, are connected by $p$ edges, the $i$-th edge of $v_1$ being connected to the $\sigma(i)$-th edge of $v_2$.  

\begin{theorem}[Free CLT for tensors]
Let $p \geq 2$, $(a_i)_{i \geq 1} \in \cE_p$ be a collection of centered free elements. Assume that for all $p$-regular maps $\MM \in \cM_0$, there exists $C(\MM)$ such that for all $i$: $|\MM(a_i)| \leq C(\MM)$. Assume moreover that for all $\sigma \in \mathrm{S}_p$, $\FF^\sigma _p (a_i) = t^\sigma_p$ is independent of $i$. Then, 
$$
\frac{1}{\sqrt n} \sum_{i=1}^n a_i
$$
converges toward a distribution, say $\SC$, characterized by $\kappa_{\FF_p^\sigma} (\SC) = \FF_p^\sigma (\SC) = t_p ^\sigma$ and $\kappa_{\MM} (\SC) = 0$ for any other connected map $\MM$. 
\end{theorem}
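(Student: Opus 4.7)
The plan is to show, for each $p$-regular trace map $\MM \in \cM_0$ with $k = |V(\MM)|$ vertices, that $\MM(S_n) \to \MM(\SC)$, where $S_n = \frac{1}{\sqrt n}\sum_{i=1}^n a_i$ and $\SC$ is the abstract distribution specified by the prescribed free cumulants. First, I would apply multilinearity (L) and the cumulant expansion of Lemma \ref{le:defcumulant} to write
\begin{equation*}
\MM(S_n) \;=\; \sum_{\NN \leq \MM} \kappa_{\NN}(S_n,\ldots,S_n) \;=\; \frac{1}{n^{k/2}} \sum_{\NN \leq \MM} \sum_{(i_1,\ldots,i_k) \in \INT{n}^k} \kappa_{\NN}(a_{i_1},\ldots,a_{i_k}).
\end{equation*}
Invoking the freeness of the singletons $(\{a_i\})_{i \geq 1}$ via Theorem \ref{th:freekappa}, I would next argue that $\kappa_{\NN}(a_{i_1},\ldots,a_{i_k}) = 0$ unless the coloring $(i_v)_v$ is constant on each connected component of $\NN$; in that case the morphism property (M) factorises the cumulant as $\prod_c \kappa_{\NN_c}(a_{j_c})$ over connected components $\NN_c$, where $j_c$ denotes the common index on $\NN_c$.

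The next step is a counting estimate. The hypothesis $|\MM(a_i)| \leq C(\MM)$ together with the finite Moebius formula for cumulants yields a uniform bound $|\kappa_{\NN_c}(a_i)| \leq C'(\NN_c)$, so an $\NN$ with $b$ connected components would contribute $O(n^{b - k/2})$. A connected component with only one vertex would have to be a bouquet $\BB_p^\sigma$ (only possible for even $p$, since the sum of degrees of a connected graph is even), whose cumulant collapses to $\BB_p^\sigma(a_i) = 0$ by centering. Hence every surviving $\NN$ has components of size at least $2$, forcing $b \leq k/2$. Contributions with $b < k/2$ vanish in the limit, and $b = k/2$ would force each component to consist of a pair of vertices joined by $p$ parallel edges, i.e.\ a melon $\FF_p^{\sigma_c}$. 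For such a melon, $\kappa_{\FF_p^{\sigma_c}}(a_i) = \FF_p^{\sigma_c}(a_i) = t_p^{\sigma_c}$, since the only maps strictly below a melon are disjoint pairs of bouquets, which vanish by centering.

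Finally, I would collect: for each pairing $\NN \leq \MM$ into melons $(\FF_p^{\sigma_c})$, the sum over the $n^{k/2}$ admissible index tuples $(j_c)$ contributes $\prod_c t_p^{\sigma_c}$ per tuple (independently of whether the $j_c$'s coincide, since this cumulant is index-independent), which after normalisation by $n^{-k/2}$ yields exactly $\prod_c t_p^{\sigma_c}$. Summing over all such $\NN$,
\begin{equation*}
\lim_{n \to \infty} \MM(S_n) \;=\; \sum_{\substack{\NN \leq \MM \\ \text{disjoint union of melons}}} \prod_c t_p^{\sigma_c} \;=\; \sum_{\NN \leq \MM} \kappa_{\NN}(\SC) \;=\; \MM(\SC),
\end{equation*}
where the prescribed cumulants of $\SC$ vanish outside disjoint unions of melons. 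The step I expect to be the main obstacle is the classification of the surviving $\NN$: confirming that centering together with the uniform cumulant bound truly kills every $\NN$ with $b < k/2$, that $b = k/2$ forces the melon structure, and that the cumulant-vanishing criterion from Theorem \ref{th:freekappa} applies in the required generality (in particular for odd $p$, where the even-family assumption of that theorem needs a small adaptation via Definition \ref{def:freeness}). Once this is settled, the remaining argument is the standard bookkeeping that already underlies the classical free CLT.
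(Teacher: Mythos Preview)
Your proposal is correct and takes essentially the same route as the paper. The only difference is cosmetic: the paper works directly at the level of cumulants, observing that for each connected $p$-regular $\MM$ with $v$ vertices multilinearity and Theorem~\ref{th:freekappa} yield $\kappa_{\MM}(s_n)=n^{-v/2}\sum_{i=1}^n\kappa_{\MM}(a_i)$ and then splits into the cases $v=1$, $v=2$, $v\geq 3$, whereas you unroll this by first expanding the moment $\MM(S_n)=\sum_{\NN\leq\MM}\kappa_{\NN}(S_n)$ and then arguing component by component via (M) --- the same three cases reappear as your analysis of components of size $1$, $2$, and $\geq 3$.
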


\begin{proof}
Let $s_n =
\frac{1}{\sqrt n} \sum_{i=1}^n a_i
$. It suffices to compute the limit of $\kappa_{\MM}(s_n)$ for all connected $p$-regular maps $\MM \in \cM_0$. Using the multi-linearity and Theorem \ref{th:freekappa}, we have 
$$
\kappa_{\MM} (s_n) = n^{-v/2} \sum_{i=1}^n \kappa_{\MM} (a_i),
$$
where $v$ is the number of vertices of $\MM$. If $v=1$ then $\kappa_{\MM} (s_n) = 0$ by assumption. If $v\geq 3$, denote $\tilde C(\MM):=\mathrm{max}_{\NN \leq \MM}C(\NN)$, then $|\kappa_{\MM} (s_n)|\leq n^{1-v/2} \tilde C(\MM)$ which goes to $0$. Finally if $v=2$, then $\MM = \FF_p ^\sigma$ for some $\sigma$ and $\kappa_{\MM} (s_n) = \MM(s_n) = t_p^\sigma$ by assumption.
\end{proof}

\section{Asymptotic freeness for random and deterministic tensors}
\label{sec:SD}

We take $\cE^N$ as in Subsection \ref{subsec:TR}. Let $\cI_0$ be a finite set and $\ell_0 : \cI_0 \to \{1,2,\ldots \}$. We consider a finite and deterministic collection $\cA^N_0 = (A^N_i)_{i \in \cI_0}$ of elements in $\cE^N$ such that for all $N$, $i$, we have $A^N_i \in \cE^N_{\ell_0(i)}$.  We assume that $\cA^N_0$ is stable by taking entry-wise complex conjugation: for all $i \in I_0$, $\bar A_i^N \in \cA^N_0$. We will prove Theorem \ref{th:free1}, Theorem \ref{th:free1b}, Theorem \ref{th:free2} and Theorem \ref{th:free3} stated in Introduction.

\subsection{Assumptions on $\cA^N_0$}

We start by giving a more formal statement of Assumptions (A1) and (A2).

\begin{enumerate}
    \item[(A1)] \label{A1} For all $\MM \in \cM_0(\cI_0)$, there exists a constant $C(\MM)$ such that for all $N \geq 1$
\begin{equation*}\label{eq:bdA0}
\ABS{\MM ( \cA_0^N)} \leq C(\MM),
\end{equation*}
where we recall $\MM ( \cA_0^N)$ is the corresponding action on $\cE_0 = \dC$. 
\end{enumerate}

To formulate Assumption (A2), we introduce another kind of maps. A {\em combinatorial hyper-map} is a pair $\MM = (\pi,\alpha)$ of permutations in $S_{m}$ for some integer $m\geq 1$. The set $\vec E (\MM) = \INT{m}$ are the directed edges (or half-edges), $\pi$ has $n$ cycles, denoted by $V(\MM)$, ordered by least elements which are the directed edges attached to each vertex and $\alpha$ is a permutation whose cycles have length at least two, denoted by $E(\MM)$. The cycles of $\alpha$ define the hyper-edges of $\MM$. Hence, compared to maps, in an hyper-map the permutation $\alpha$ is not necessarily an involution. We denote by $\widehat \cM_0$ the set of combinatorial hyper-maps, we have $\cM_0 \subset \widehat \cM_0$. The definition of vertex degrees, maps with boundaries, colored maps and action of maps extend verbatim to hyper-maps.  
Similarly, in the central example, $\cE^N_p = (\dC^N)^{\otimes p}$, $\cE^N = \sqcup_p \cE^N_p$ introduced in Subsection \ref{subsec:TR}, the action given by \eqref{eq:tracem} for maps extends verbatim to hyper-maps.

 We extend assumption (A1) to all hyper-maps.
\begin{enumerate}
    \item[(A2)] \label{A2} For all hyper-maps $\MM \in \widehat \cM_0(\cI_0)$, there exists a constant $C(\MM)$ such that  for all $N \geq 1$
\begin{equation*}
\ABS{\MM ( \cA_0^N)} \leq C(\MM).
\end{equation*}
\end{enumerate}


\subsection{Schwinger-Dyson equations for Gaussian symmetric random tensors}

In this subsection, we fix $p \geq 1$.  Let $\cI  =\cI_0 \cup \{ \SC \}$ and $\ell : \cI \to \{1,2,\ldots \}$ such that $\ell = \ell_0$ on $\cI_0$ and $\ell(\SC) = p$. We define $A^N_\SC = W^N$ and $\cA^N = \cA_0^N \cup \{ W^N \} = \{ A_i^N : i \in \cI \}$.

If $\MM  = (\pi,\alpha,w) \in \cM_{p+q}(\cI)$ has $n$ vertices, we define $\MM_p^{\BU} = (\pi', \alpha',w')$ as the $\cI$-map in $\cM_{q}(\cI)$ with $n+1$ vertices where the first $p$ boundary edges of $\MM$, say $(e_1,\ldots, e_p)$ are wired to a new vertex $v = n+1$ of degree $p$, $w'_v=\SC$ and with $\partial v = (f_1,\ldots,f_p)$ such that $\alpha'(f_i) = e_i$. We often denote $\MM^{\BU}$ instead of $\MM_p^{\BU}$ for ease of notation.
Similarly, if $n \geq 2$ and $v \in V(\MM)$ is such that $\deg(v) = p$, we define $\MM^{\backslash v} \in \cM_{q} (\cI)$ as the $\cI$-map on $n-1$ vertices where $v$ has been removed and the directed edges $\alpha(\partial v)$ are matched to the first $p$ boundary edges (in a given fixed order). 
Finally, if $\MM \in \cM_q$ and $\sigma \in \mathrm{S}_q$ we denote by $\MM.\sigma$ the map where the boundary edges have been permuted by $\sigma$.

If $\MM = (\pi,\alpha,w) \in \cM_q(\cI)$ is an $\cI$-map, recall that $\MM (\cA^N) \in \cE^N_q$ is the corresponding action on $\cA^N$.  Finally, we set for any continuous function $f : \cE^N_q \to \dC$ such that the expression below is integrable:
$$
\dE_N [ f( \MM ) ] = \dE [ f ( \MM ( \cA^N ) )  ] .
$$

\begin{proposition}
\label{prop:SD1}
In the Gaussian case, for any connected $\cI$-map $\MM \in \cM_p(\cI)$, we have
$$
\dE_N [ \MM^\BU ] = \frac{1}{(p-1)!} \sum_{v,\sigma} \dE_N    [ (\MM.\sigma)^{\backslash v} ] + O ( \frac{1}{N} ), 
$$
where the sum is over all $v \in V(\MM)$ such that $w_v = \SC$, all permutations $\sigma \in \mathrm{S}_p$ such that  $(\MM.\sigma)^{\backslash v}$  has $p$ connected components  (this sum might be empty).
\end{proposition}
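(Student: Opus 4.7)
The plan is to apply Gaussian integration by parts at the distinguished $\SC$-vertex $v_0$ introduced by the $\BU$ operation. Writing $W^N = X/N^{(p-1)/2}$, the symmetric Gaussian tensor $X$ satisfies $\dE[X_i X_j] = \frac{1}{(p-1)!}\sum_{\sigma \in \mathrm{S}_p} \1_{i = j_\sigma}$, so the Stein identity reads
$$
\dE[X_i F(X)] \;=\; \frac{1}{(p-1)!}\sum_{\sigma \in \mathrm{S}_p} \dE\bigl[\partial_{X_{i_\sigma}} F(X)\bigr],
$$
where $\partial_{X_l}$ is the formal derivative treating each coordinate of $X$ as independent. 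Since $\MM$ is connected, $\MM^\BU$ is connected with $\gamma(\MM^\BU) = 1$, and isolating the factor attached to $v_0$ in the defining sum \eqref{eq:tracem} gives
$$
\MM^\BU(\cA^N) \;=\; \frac{1}{N^{(p+1)/2}}\sum_{j\in\INT{N}^{E(\MM^\BU)}} X_{j_{\partial v_0}} \prod_{u\in V(\MM)} (A^N_{w_u})_{j_{\partial u}}.
$$

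Applying Stein to $X_{j_{\partial v_0}}$, only the $\SC$-coloured vertices $u^* \in V(\MM)$ (those with $w_{u^*} = \SC$) contribute to the formal derivative, each entering as $X_{j_{\partial u^*}}/N^{(p-1)/2}$. Gathering the $N$-factors (an $N^{-1}$ from the definition of $\MM^\BU$ and two factors of $N^{-(p-1)/2}$), one obtains
$$
\dE[\MM^\BU(\cA^N)] \;=\; \frac{1}{N^p(p-1)!} \sum_{\sigma \in \mathrm{S}_p} \sum_{\substack{u^*\in V(\MM)\\ w_{u^*}=\SC}} \sum_{j} \1_{j_{\partial u^*} = (j_{\partial v_0})_\sigma}\, \dE\!\left[\prod_{u \ne v_0,u^*} (A^N_{w_u})_{j_{\partial u}}\right].
$$
The inner constraint glues the $p$ half-edges of $u^*$ onto the $p$ boundary half-edges of $\MM$ via $\sigma$; this surgery is precisely the operation $(\MM.\sigma)^{\backslash u^*} \in \cM_0(\cI)$ (modulo replacing $\sigma$ by $\sigma^{-1}$, immaterial under summation over $\mathrm{S}_p$). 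By \eqref{eq:tracem}, the inner sum therefore equals $N^{\gamma((\MM.\sigma)^{\backslash u^*})}\dE\bigl[(\MM.\sigma)^{\backslash u^*}(\cA^N)\bigr]$, yielding
$$
\dE[\MM^\BU(\cA^N)] \;=\; \frac{1}{(p-1)!} \sum_{\sigma, u^*} N^{\gamma((\MM.\sigma)^{\backslash u^*}) - p}\, \dE\bigl[(\MM.\sigma)^{\backslash u^*}(\cA^N)\bigr].
$$

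To finish, I would establish the topological bound $\gamma((\MM.\sigma)^{\backslash u^*}) \leq p$. Since $\MM$ is connected and $u^*$ has degree $p$, removing $u^*$ creates at most $p$ components, and subsequently gluing $p$ new edges can only preserve or reduce that count. Thus only the terms with $\gamma = p$ survive at order $N^0$, while those with $\gamma \leq p-1$ contribute $O(1/N)$ thanks to (A1) combined with standard Gaussian moment bounds ensuring $\dE\bigl[(\MM.\sigma)^{\backslash u^*}(\cA^N)\bigr]$ is uniformly $O(1)$. The main obstacle is the clean combinatorial translation of the Wick pairing between the $p$ half-edges of $v_0$ and those of $u^*$ (parameterized by $\sigma$) into the surgery $(\MM.\sigma)^{\backslash u^*}$, together with the verification of the sharp bound $\gamma \leq p$ which is what makes the error genuinely $1/N$ rather than larger.
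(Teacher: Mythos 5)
Your proposal is correct and follows essentially the same route as the paper: Gaussian integration by parts (Stein's identity) applied at the added $\SC$-vertex yields an exact identity with weights $N^{\gamma((\MM.\sigma)^{\backslash v})-p}$ (the paper's Lemma \ref{le:GItensor}), after which the topological bound $\gamma \le p$ isolates the leading terms and uniform boundedness of the remaining trace maps gives the $O(1/N)$ error. The one step you leave informal — that $\dE_N[(\MM.\sigma)^{\backslash v}]=O(1)$ even when further Gaussian vertices remain — is exactly the paper's Lemma \ref{le:TENSUbounded}, proved by induction on the number of $\SC$-vertices (or Wick calculus), as you indicate.
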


\begin{proposition}
\label{prop:var1}
In the Gaussian case, for any $\cI$-map $\MM \in \cM_0(\cI)$, we have
$$
\dE_N [ |\MM - \dE_N \MM  |^2] = O ( \frac{1}{N} ). 
$$
Moreover, for any $\cI$-map $\MM \in \cM_0(\cI)$, with connected components $(\MM_1,\ldots,\MM_{\gamma})$ we have
$$
\dE_N [ \MM ] = \prod_{i=1}^{\gamma} \dE_N [ \MM_i ]  + O ( \frac{1}{N} ). 
$$
\end{proposition}

Theorem \ref{th:free1} in the Gaussian case could be obtained as a corollary of Proposition \ref{prop:SD1} and Proposition \ref{prop:var1}. We will however give an alternative argument in Subsection \ref{subsec:free4}.

\subsection{Proof of Proposition \ref{prop:SD1} and Proposition \ref{prop:var1}}

These propositions are based on the following lemma which is a consequence of the Gaussian integration by part formula: if $Z$ is a real Gaussian variable, for all $f : \dR \to \dC$ in $C^1(\dR)$ such that $\dE |f'(Z)| < \infty$ then 
\begin{equation}\label{eq:gIPP}
    \dE Z f (Z) = \dE Z^2 \dE f'(Z).
\end{equation}

\begin{lemma}
\label{le:GItensor}
For any $\MM \in \cM_p(\cI)$ with $\gamma$ connected components, we have
$$
\dE_N [ \MM^\BU ] = \frac{1}{(p-1)!} \sum_{v,\sigma} \frac{N^{\gamma(v,\sigma)}}{N^{\gamma + p -1}} \dE_N    [ (\MM.\sigma)^{\backslash v} ],
$$
where the sum runs over all $v$ such that $w_v = \SC$, all permutations $\sigma \in \mathrm{S}_p$  and $\gamma(v,\sigma)$ is the number of connected components of  $(\MM.\sigma)^{\backslash v}$.
\end{lemma}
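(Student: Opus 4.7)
The plan is to unfold $\MM^\BU(\cA^N)$ using the definition~\eqref{eq:tracem} and apply Gaussian integration by parts to the entry of $W^N$ sitting at the added vertex, which I will call $v_\star$. Writing $W^N_i = X_i/N^{(p-1)/2}$, the definition of the action for $\MM^\BU \in \cM_0$ gives
\begin{equation*}
\dE_N[\MM^\BU] = \frac{1}{N^{\gamma(\MM^\BU) + (p-1)/2}} \sum_{i \in \INT{N}^{E(\MM^\BU)}} \dE\Bigl[ X_{i_{\partial v_\star}} \prod_{u \in V(\MM)} (A^N_{w_u})_{i_{\partial u}} \Bigr],
\end{equation*}
where each $\SC$-colored vertex $u \in V(\MM)$ still contributes a factor $W^N_{i_{\partial u}} = X_{i_{\partial u}}/N^{(p-1)/2}$ inside the expectation, while the non-$\SC$ factors are deterministic.

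The key step is to apply one round of Wick's theorem, pairing $X_{i_{\partial v_\star}}$ with each $X_{i_{\partial v}}$ for $v \in V(\MM)$ such that $w_v = \SC$. The necessary covariance is
\begin{equation*}
\dE[X_i X_j] \;=\; \frac{p}{\cP_i}\,\1\bigl(i \stackrel{p}{\sim} j\bigr) \;=\; \frac{1}{(p-1)!} \sum_{\sigma \in \mathrm{S}_p} \1(j = i_\sigma),
\end{equation*}
the second equality being a consequence of the orbit--stabilizer relation $|\mathrm{Stab}(i)|=p!/\cP_i$. Each $(v,\sigma)$-summand therefore produces an indicator $\1(i_{\partial v} = (i_{\partial v_\star})_\sigma)$: for every $j \in \INT{p}$, the edge carrying the $j$-th half-edge of $v$ must be assigned the same index as the edge carrying the $\sigma(j)$-th half-edge of $v_\star$. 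Since the latter is paired in $\MM^\BU$ with the $\sigma(j)$-th boundary half-edge of $\MM$, enforcing these $p$ identifications in the edge-indexed sum of $\MM^\BU$ is equivalent to deleting both $v_\star$ and $v$ and wiring the partners $\alpha(\partial v)$ onto the $\sigma$-permuted boundary half-edges of $\MM$; this is precisely the graph operation that defines $(\MM.\sigma)^{\backslash v}$.

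It remains to tally the factors of $N$. Converting the residual Gaussian moment back into one over $W^N$ entries (gaining $N^{(k-1)(p-1)/2}$, where $k$ is the number of $\SC$-vertices of $\MM$) combines with the prefactor $1/N^{(k+1)(p-1)/2}$ to produce a net factor $1/N^{p-1}$. The constrained sum over the $|E(\MM^\BU)| = |E(\MM)| + p$ edge indices collapses to a free sum over $|E((\MM.\sigma)^{\backslash v})| = |E(\MM)|$ indices, which restores the trace-map normalization carried by $(\MM.\sigma)^{\backslash v} \in \cM_0$ at the cost of a factor $N^{\gamma(v,\sigma)}$. Combining with the $1/N^{\gamma(\MM^\BU)}$ inherited from the definition of $\MM^\BU$ and using $\gamma(\MM^\BU) = \gamma$, which holds in particular when $\MM$ is connected (the setting of Proposition~\ref{prop:SD1}), one obtains the stated identity.

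I expect the main difficulty to be the graph-theoretic verification that the index identification $i_{\partial v} = (i_{\partial v_\star})_\sigma$ truly implements the map $(\MM.\sigma)^{\backslash v}$ in all cases, in particular when $v$ carries self-loops or when there are already edges between $v$ and vertices incident to the boundary half-edges of $\MM$. The Gaussian integration by parts and the $N$-counting are routine once this combinatorial correspondence is set up.
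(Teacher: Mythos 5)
Your proposal is correct and follows essentially the same route as the paper: expand the trace-map sum, apply Gaussian integration by parts (one Wick pairing) at the new $\SC$-vertex, use the orbit–stabilizer identity $\dE[X_iX_j]=\frac{1}{(p-1)!}\sum_{\sigma}\1(j=i_\sigma)$ to turn the covariance into a sum over $\sigma\in\mathrm{S}_p$, and then recognize the constrained index sum as $N^{\gamma(v,\sigma)}(\MM.\sigma)^{\backslash v}(\cA^N)$ with the same power counting. The combinatorial identification you flag as the remaining difficulty is asserted in a single line in the paper as well, so your argument is at the same level of detail as the original.
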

\begin{proof}
    For a given $i \in \INT{N}^p$, the number of permutations $\sigma \in \mathrm{S}_p$ such that $i_{\sigma} = i$ is $\cS_i =  p! / \cP_i$, where  $\cP_i$ is the number of elements in the equivalence class of $i$. Let $\MM \in \cM_p(\cI)$ and let $u$ be the newly added vertex in $\MM^\BU$. Using \eqref{eq:varX}, we compute using the Gaussian integration by part \eqref{eq:gIPP}:
    \begin{align*}
        \dE_N [ \MM^\BU ] &= \frac{1}{N^{\gamma}} \sum_{i \in \INT{N}^{E(\MM^\BU)}} \dE_N [W_{i_{\partial u}}\prod_{v \in V(\MM^\BU)\setminus \{u\}} (x_{v})_{i_{\partial v}}] \\
        &= \frac{1}{N^{\gamma}} \sum_{i \in \INT{N}^{E(\MM^\BU)}} \sum_{v : w_v=\SC \atop  \exists \sigma \in \mathrm{S}_p :  i_{\partial v} =   i_{\sigma(\partial u)}} \dE_N [W^2_{i_{\partial u}}] \dE_N[\prod_{t \in V(\MM^\BU)\setminus \{u,v\}} (x_{v})_{i_{\partial t}}] \\
        &= \frac{1}{N^{\gamma+p-1}} \sum_{v, \sigma } \sum_{i \in \INT{N}^{E(\MM^\BU)}} \frac{ \mathbf{1}_{i_{\partial v}=i_{\sigma(\partial u)}} }{ \cS_{i_{\partial u} }}\dE_N [X^2_{i_{\partial u}}] \dE_N[\prod_{t \in V(\MM^\BU)\setminus \{u,v\}} (x_{v})_{i_{\partial t}}] \\
        &=\frac{1}{(p-1)!} \sum_{v, \sigma } \frac{1}{N^{\gamma+p-1}} \sum_{i \in \INT{N}^{E(\MM^\BU)}} \mathbf{1}_{i_{\partial v}=i_{\sigma(\partial u)}}  \dE_N[\prod_{t \in V(\MM^\BU)\setminus \{u,v\}} (x_{v})_{i_{\partial t}}] \\
        &= \frac{1}{(p-1)!} \sum_{v,\sigma} \frac{N^{\gamma(v,\sigma)}}{N^{\gamma + p -1}} \dE_N    [ (\MM.\sigma)^{\backslash v} ]
    \end{align*}
with $\gamma(v,\sigma)$ the number of connected components of  $(\MM.\sigma)^{\backslash v}$. 
\end{proof}

\begin{lemma}\label{le:TENSUbounded}
    For any $\cI$-map $\MM \in \cM_0(\cI)$, we have 
  $$
  \dE_N [\MM] = O(1).
  $$  
\end{lemma}

\begin{proof}
By induction on the number $t$ of vertices $v$ of $\MM$ such that $w_v = \SC$. If $t=0$ it is true by Assumption (A1) and if $t=1$ then $\dE_N [\MM] = 0$. Otherwise, if $t\geq 2$ we apply Lemma \ref{le:GItensor} to delete two vertices such that $w_v = \SC$ and then conclude by induction. Another proof of this lemma could be achieved by Wick calculus.
\end{proof}

\begin{proof}[Proof of Proposition \ref{prop:SD1}]
The Proposition \ref{prop:SD1} is an immediate corollary of Lemma \ref{le:GItensor} and Lemma \ref{le:TENSUbounded}.
\end{proof}

\begin{proof}[Proof of Proposition \ref{prop:var1}] 
We first prove the second claim of the proposition: $\dE_N [ \MM ] = \prod_{i=1}^{\gamma} \dE_N [ \MM_i ]  + O ( 1/ N )$. The proof is by induction on the number $2t$ of vertices such that $w_v = \SC$ (if there is an odd number of such vertices, the expectation is zero).  If $t = 0$, all tensors are deterministic and the claim is trivial. Otherwise, $t \geq 1$ and assume that the statement holds for $t-1$. 

 Let $\MM \in \cM_0(\cI)$ with $\gamma$ connected components $( \MM_1, \ldots, \MM_\gamma)$ and $2t$ vertices such that $w_v = \SC$. We write $\MM=\widetilde \MM^\BU$ for some $\widetilde \MM \in \cM_p(\cI)$ with connected components $(\widetilde \MM_1, \ldots, \widetilde \MM_\gamma)$ such that $  \MM_1 = \widetilde \MM_1^\BU$ and $\widetilde \MM_j = \MM_j$ for $ 2 \leq j \leq \gamma$. An application of Lemma \ref{le:GItensor} gives:
$$
\dE_N [ \MM ] =  \frac{1}{(p-1)!} \sum_{v,\sigma} \frac{N^{\gamma(v,\sigma)}}{N^{\gamma + p -1}} \dE_N    [ (\widetilde \MM.\sigma)^{\backslash v} ],
$$
where  the sum runs on all $v \in V (\widetilde \MM)$ such that $w_v = \SC$, all  $\sigma \in \mathrm{S}_p$  and $\gamma(v,\sigma)$ is the number of connected components of  $(\widetilde \MM.\sigma)^{\backslash v}$. If $v \notin V(\widetilde \MM_1)$ then $\gamma(v,\sigma) \leq p + (\gamma-2)$ (because the two involved connected components will generate at most $p$ connected components after the rewiring). Hence, from Lemma \ref{le:TENSUbounded}, the corresponding contribution is $O(1/N)$. Since $ (\widetilde \MM.\sigma)^{\backslash v}$ has $2(t-1)$ vertices such that  $w_v = \SC$,  we may apply the induction hypothesis and deduce 
$$
\dE_N [ \MM ] = \left( \frac{1}{(p-1)!} \sum_{v,\sigma} \frac{N^{\gamma(v,\sigma)}}{N^{\gamma + p -1}}  \dE_N    [ (\widetilde \MM_1.\sigma)^{\backslash v} ] \right) \prod_{j = 2}^\gamma  \dE_N    [ \MM_j ] + O \left(\frac 1 N \right). 
$$
where $v \in V(\widetilde \MM_1)$ such that $w_v = \SC$. However, from Lemma \ref{le:TENSUbounded} again 
$$
\dE [\MM_1] = \dE [\widetilde \MM^\BU_1]= \frac{1}{(p-1)!} \sum_{v,\sigma} \frac{N^{\gamma(v,\sigma)}}{N^{\gamma + p -1}} \dE_N    [ (\widetilde \MM_1.\sigma)^{\backslash v} ].
$$
This concludes the proof of the inductive step of the second claim.

We next prove the first claim. It is an easy consequence of the second claim. We take $\MM \in \cM_0(\cI)$ and write
$$
\dE_N [ |\MM - \dE_N \MM  |^2] = \dE_N[\MM \sqcup  \MM_{\text{copy}}] -  |\dE_N [\MM] |^2, 
$$
where $\MM\sqcup   \MM_{\text{copy}}$ is the $\cA$-map corresponding to the disjoint union of two copies of $ \MM$ where in the second copy $\MM_{\text{copy}}$, the color map $w_{\text{copy}}$ is changed to its complex conjugate: if, in $\MM$, $w(v) = i_v$ then $w_{\text{copy}}(v) = i'_v$ where $i'_v \in \mathcal I_0$ is the index such that $A_{i'_v}^N  = \bar A^N_{i_v}$ (recall that we have assumed that $\cA^N_0$ is stable by conjugation). By construction  $\dE_N [\MM_{\text{copy}}]  = \overline{ \dE_N [\MM] }$. We apply the second claim to  $\dE_N[\MM \sqcup  \MM_{\text{copy}}] $ and get $ \dE_N[\MM \sqcup  \MM_{\text{copy}}] = |\dE_N [\MM] |^2 + O (1/N)$. The conclusion follows.
\end{proof}



\subsection{Schwinger-Dyson equations for Haar orthogonal matrices: proof of Theorem \ref{th:free2}}

In this paragraph,  we set $N \geq 1$ be an integer and $U = U_N$ is Haar distributed on the orthogonal group $\mathrm{O}(N)$.

We start with a simple observation. Consider a map $\MM \in \cM_0$, a vertex $u \in V(\MM)$ of degree $2$ and let $\MM' \in \cM_0$ be the map where the neighboring edges $\partial u = (e_1,e_2)$ of $u$ have been permuted to $(e_2,e_1)$. Let $x \in \cE_{\MM}$. We obviously have $\MM(x) = \MM' (x')$ where $x'_v = x_v$ for all $v \ne u$ and $x'_u = x_u^{\intercal}$ is the matrix transpose of $x_u \in \cE^N_2$. Consequently, it is sufficient to consider maps with vertices attached to $U_N \in \mathrm{O}(N)$ only (since $U_N^* = U_N^{\intercal}$). 

We let $\cI  =\cI_0 \cup \{ \HU\}$ and $\ell : \cI \to \{1,2,\ldots \}$ such that $\ell = \ell_0$ on $\cI_0$ and $\ell(\HU) = 2$. We assume without loss of generality that there is an element $1 \in \cI$ such that $A_1^N = I_N \in \cE^N_2$ is the identity matrix. We define $A^N_\HU  = U_N$ and $\cA^N = \cA_0^N \cup \{ U_N  \} = \{ A_i^N : i \in \cI \}$.

As above, if $\MM = (\pi,\alpha,w) \in \cM_q(\cI)$ is an $\cI$-map, recall that $\MM (\cA^N) \in \cE^N_q$ is the corresponding action on $\cA^N$.  Finally, we set for any continuous function $f : \cE^N_q \to \dC$ such that the expression below is integrable:
$$
\dE_N [ f( \MM ) ] = \dE [ f ( \MM ( \cA^N ) )  ] .
$$

The following proposition is a version of Proposition \ref{prop:SD1} for Haar orthogonal matrices where chromatic switches appear. If $\MM  = (\pi,\alpha,w) \in \cM(\cI)$ and $e \in \partial v$, we set $w_e = w_v$.

\begin{proposition}\label{prop:SD2}
Let $\MM =(\pi,\alpha,w) \in \cM_0(\cI)$ with $\gamma$ connected components, $\veps \in \{1,2\}$,  $u \in V(\MM)$ be such that $w_u = \HU$ and set $\partial u = (f_1,f_2)$. We have
$$
\dE_N [\MM] = \sum_{v} \dE_N [\MM_{u,v,\veps}^+] -    \sum_{v} \dE_N [\MM_{u,v,\veps}^-] + O(\frac 1 N),
$$
where the sums are over all $v \ne u \in V(\MM)$ in the connected component of $u$, $\partial v = (g_1,g_2)$
such that $w_v = \HU$  and $\MM_{u,v,\veps}^\pm < \MM$ has $\gamma+1$ connected components. In the first sum, $\MM_{u,v,\veps}^+$ is obtained from $\MM$ by the chromatic switch which connects $f_\veps$ and $g_\veps$ (and $\alpha(f_\veps)$ with $\alpha(g_\veps)$). In the second sum  $\MM_{u,v,\veps}^-$ is obtained from $\MM$ by the chromatic switch which connects $f_\veps$ and $\alpha(g_\veps)$ (and $\alpha(f_\veps)$ with $g_\veps)$). Finally, $\MM_{u,v,\veps}^+(\cA^N)$ is left unchanged if we replace $w_v = w_u = \HU$ by the identity.
\end{proposition}

\begin{proposition}
\label{prop:var2}
For any $\cI$-map $\MM \in \cM_0(\cI)$, we have
$$
\dE_N [ |\MM - \dE_N \MM  |^2] = O ( \frac{1}{N} ). 
$$
Moreover, for any $\cI$-map $\MM \in \cM_0(\cI)$, with connected components $(\MM_1,\ldots,\MM_{\gamma})$ we have
$$
\dE_N [ \MM ] = \prod_{i=1}^{\gamma} \dE_N [ \MM_i ]  + O ( \frac{1}{N} ). 
$$
\end{proposition}

Theorem \ref{th:free2} is a corollary of Proposition \ref{prop:SD2} and Proposition \ref{prop:var2}.

\begin{proof}[Proof of Theorem \ref{th:free2}] We should check \eqref{eq:asfree}. Observe that for any finite collection $\cB_0^N = (\mathfrak{p}_j (A^N)) _{j \in \mathcal J_0}$, with $\mathfrak{p}_j \in \mathbb C[\cM_{q_j} (\mathcal I_0)]$, then Assumption (A1) also holds for  $\cB_0^N$ by property (S) and (L). Hence up to replacing the finite collection $\cA^N_0  = (A^N_{i})_{i \in \mathcal I_0}$ by any other finite collection $(\mathfrak{p}_j (A^N)) _{j \in \mathcal J_0}$, it is enough to check that, in probability, $\lim_{N \to \infty} \MM ( \cA^N) = 0$ for any map $\MM \in \cM_0 (\cI)$ satisfying conditions (P1) and (P2) in the definition of freeness. 

To this end, from Proposition \ref{prop:var2} and Markov inequality, it suffices to check the asymptotic freeness in expectation. We may then iterate Proposition \ref{prop:SD2} and \ref{prop:var2}. Fix an $\cI$-map $\MM$. By Proposition \ref{prop:SD2} and Proposition \ref{prop:var2} applied recursively, we have $\dE_N [\MM] = \sum t(\widehat \MM) \dE_N [\widehat \MM] + O(1/N)$ where the sum is over all $\widehat \MM$  satisfying conditions (P1)-(P2) in the definition of freeness and $t(\widehat \MM) \in \dZ$. On the other hand, if such $\widehat \MM$ has a minimal non-monochromatic or a monochromatic centered connected component, we find by Proposition \ref{prop:var2} and Proposition \ref{prop:SD2} (for the minimal non-monochromatic case), that $\dE_N [\widehat \MM] = O(1/N)$.
The conclusion follows.
\end{proof}

\subsection{Proof of Proposition \ref{prop:SD2} and Proposition \ref{prop:var2}}

We start by checking that $\dE_N [\MM]$ is uniformly bounded for all $\cI$-maps.
\begin{lemma}
\label{le:Ubounded}
  For any $\cI$-map $\MM \in \cM_0(\cI)$, we have 
  $$
  \dE_N [\MM] = O(1).
  $$
\end{lemma}

Lemma \ref{le:Ubounded} can be established as a consequence of Weingarten calculus for the Orthogonal group. For an even integer $k \geq 1$, we denote by $\cP_k$ the set of pairings of $\INT{k}$ (that is the permutations $\sigma \in \mathrm{S}_k$ such that $\sigma^2 $ is the identity and without fixed point). 

\begin{proposition}[Corollary 3.4 in \cite{MR2217291}]\label{prop:Wg1}
Let $N \geq 1$ be an integer, $U_N$ be Haar distributed on the orthogonal group $\mathrm{O}(N)$. Let $k$ be  even. There exists a function $\sigma \to \Wg_N (\sigma)$ on  $\mathrm{S}_k$ such that, for any $i, j$ in $\INT{N}^k$.   
\begin{equation*}
\EE \PAR{\prod_{t=1}^k (U_N)_{i_t,j_t}}=\sum_{p,q\in \cP_k}\delta_{p}(i)\delta_{q}(j)\Wg_N (p q^{-1}),
\end{equation*}
and $\delta_p(i) \in \{0,1\}$ is zero unless $i_{l} = i_{p(l)}$ for all $l \in \INT{k}$.
\end{proposition}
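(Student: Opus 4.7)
The proof combines two classical ingredients: that $\dE[U_N^{\otimes k}]$ realizes orthogonal projection onto the $\mathrm{O}(N)$-invariants in $V^{\otimes k}$ with $V = \dC^N$, and Brauer's First Fundamental Theorem of invariant theory for $\mathrm{O}(N)$, which identifies these invariants with pair-partition tensors.

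First I would view $\Phi := \dE[U_N^{\otimes k}]$ as an endomorphism of $V^{\otimes k}$; its matrix entries in the standard basis are exactly the left-hand side of the claimed identity. Left-invariance of Haar measure gives $O^{\otimes k}\Phi = \Phi$ for every $O \in \mathrm{O}(N)$, so $\mathrm{Im}(\Phi) \subseteq \mathrm{Inv}_k := (V^{\otimes k})^{\mathrm{O}(N)}$; applying Fubini with an independent Haar copy yields $\Phi^2 = \Phi$; and $U_N \stackrel{d}{=} U_N^{\intercal}$ yields $\Phi^* = \Phi$. Together these properties characterize $\Phi$ as the orthogonal projector onto $\mathrm{Inv}_k$.

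Next I would invoke Brauer's theorem: $\mathrm{Inv}_k = 0$ for $k$ odd, while for $k$ even $\mathrm{Inv}_k = \SPAN\{v_p : p \in \cP_k\}$ with pairing vectors $(v_p)_i := \delta_p(i)$. Expanding the orthogonal projection onto this spanning set gives
\[
\Phi_{i, j} = \sum_{p, q \in \cP_k} C(p, q) \, \delta_p(i) \, \delta_q(j),
\]
where $C$ is a pseudoinverse of the Gram matrix $G_{p, q} = \langle v_p, v_q\rangle = N^{\ell(p,q)}$ and $\ell(p, q)$ counts the loops in the graph obtained by superimposing $p$ and $q$. This already has the shape of the claimed formula; it only remains to show that $C(p, q)$ depends on $(p, q)$ only through $p q^{-1}$.

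The final step is the main technical obstacle. The easy half is to note that $\ell(p, q)$ equals half the number of cycles of the permutation $p q \in \mathrm{S}_k$ (recall $q^{-1} = q$ for pairings), so $G_{p, q}$ is already a function of $p q^{-1}$; the harder half is that this dependence survives pseudoinversion of $G$ on its image. The clean way to organize this, which I would follow, is to identify pairs $(p, q)$ of pairings with the double cosets $H \backslash \mathrm{S}_k / H$, where $H \subset \mathrm{S}_k$ is the hyperoctahedral subgroup stabilizing a fixed pairing, and to check that both the Gram matrix entries and their pseudoinverse are constant on these double cosets, which are parameterized precisely by $p q^{-1}$. Granting this, we set $\Wg_N(\sigma) := C(p, q)$ for any pair with $\sigma = p q^{-1}$, and extend $\Wg_N$ by zero to the remainder of $\mathrm{S}_k$, yielding the desired function.
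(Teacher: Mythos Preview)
The paper does not actually give a proof of this proposition: it is stated as a known result from Weingarten calculus (together with the asymptotic Lemma~\ref{le:Wg2}) and then used in the proof of Lemma~\ref{le:Ubounded}. So there is no ``paper's own proof'' to compare against.

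That said, your sketch is the standard derivation of the orthogonal Weingarten formula and is essentially correct. One small imprecision: the diagonal $\mathrm{S}_k$-orbits on $\cP_k\times\cP_k$ (equivalently, the double cosets $H\backslash\mathrm{S}_k/H$) are not literally parameterized by the permutation $pq^{-1}$ but by its \emph{cycle type} (the coset type). Concretely, if $(p,q)$ and $(p',q')$ lie in the same orbit then $pq$ and $p'q'$ are conjugate, not necessarily equal. This does not harm your argument: since $C(p,q)$ depends only on the cycle type of $pq$, your definition $\Wg_N(\sigma):=C(p,q)$ whenever $\sigma=pq^{-1}$ is still well-defined (equal products have equal cycle types), and extending by zero elsewhere gives a class function on $\mathrm{S}_k$ as required. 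With that clarification, your argument is complete.
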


The Weingarten function $\Wg_N(\sigma)$ admits various representations. Here, we shall only use the following basic estimate. For $\sigma\in \mathrm{S}_k$, we denote by $|\sigma| = k - \ell(\sigma)$ where $\ell(\sigma)$ is the number of disjoint cycles in the cycle  decomposition of $\sigma$ ($|\sigma|$ is also the minimal number $m$ such that $\sigma$ can be written as a product  of $m$ transpositions). In particular $(-1)^{|\sigma|}$ is the signature of $\sigma$. 

\begin{lemma}[Theorem 3.13 in \cite{MR2217291}]\label{le:Wg2}
For any fixed $k$ and $\sigma \in \mathrm{S}_k$, we have 
$$\Wg_N(\sigma) = (-1)^{|\sigma|} N^{-k/2-|\sigma|/2} \left( 1+ O (\frac{1}{N})\right).  $$
\end{lemma}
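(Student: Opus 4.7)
The strategy is to realise $\Wg_N$ as the inverse of a concrete Gram matrix, factor out its leading $N$-scale, and invert by a Neumann series whose entries are controlled by the Cayley metric on pairings.

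First, by $\mathrm{O}(N)$-invariance and Proposition \ref{prop:Wg1}, one has $\Wg_N = G_N^{-1}$ acting on the finite-dimensional space $\dC[\cP_k]$, where the Gram matrix is $G_N(p,q) = N^{\ell(p,q)}$ and $\ell(p,q)$ is the number of loops in the diagram obtained by pairing $\{1,\ldots,k\}$ simultaneously by $p$ and $q$. This arises from the identity $\sum_{a \in \INT{N}^k} \delta_p(a)\delta_q(a) = N^{\ell(p,q)}$ combined with the invariance of $\EE \prod_t (U_N)_{i_t,j_t}$. A standard combinatorial observation then relates $\ell(p,q)$ to the Cayley distance of $\sigma = pq^{-1}$ on $\mathrm{S}_k$: since $p,q$ are fixed-point-free involutions, each red-blue loop of length $2s$ corresponds to two $s$-cycles of $\sigma$, giving $\ell(p,q) = k/2 - m(\sigma)$ for a non-negative integer $m(\sigma)$ proportional to $|\sigma|$.

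Factoring out the leading scale, I would write $G_N = N^{k/2}(I + R_N)$ with $R_N(p,q) = N^{-m(pq^{-1})}$ for $p \neq q$ and $R_N(p,p) = 0$. Since $m(\sigma) \geq 1$ whenever $\sigma \neq e$, every entry of $R_N$ is $O(1/N)$, and since $|\cP_k| = (k-1)!!$ is a constant independent of $N$, this entrywise bound upgrades to $\|R_N\|_{\mathrm{op}} = O(1/N)$. For $N$ large, the Neumann series then gives
\begin{equation*}
\Wg_N = N^{-k/2}\sum_{j \geq 0}(-1)^j R_N^j,
\end{equation*}
and a chain $p = p_0,p_1,\ldots,p_j = q$ contributes $\prod_i N^{-m(p_i p_{i+1}^{-1})}$. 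By the triangle inequality for Cayley distance, each such chain is bounded by $N^{-m(\sigma)}$, with equality precisely for geodesic chains. Collecting leading contributions yields $\Wg_N(\sigma) = N^{-k/2 - m(\sigma)}\bigl(c(\sigma) + O(1/N)\bigr)$, where $c(\sigma)$ is an explicit signed count of geodesic chains.

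The main obstacle will be evaluating the constant $c(\sigma)$ and identifying it with the sign stated in the lemma. My plan is to exploit multiplicativity of the leading order over the coset-type decomposition of $\sigma$ (the cycles of $\sigma$ pair up by equal length, and the geodesic chains decompose accordingly), reducing the global computation to the case of a single cycle. For a single cycle, the signed count of geodesic decompositions into elementary swaps of pairings is a classical telescoping / Möbius-style identity on the non-crossing partition lattice whose value is a pure sign, matching the claim in the lemma.
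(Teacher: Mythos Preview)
The paper does not actually prove this lemma; it is quoted as a ``basic estimate'' and then only used, in the proof of Lemma~\ref{le:Ubounded}, through the crude consequence $|\Wg_N(\sigma)| = O(N^{-k/2})$. So there is nothing to compare your argument against, and your Neumann-series strategy (invert the Gram matrix $G_N(p,q)=N^{\ell(p,q)}$ on $\dC[\cP_k]$, factor out $N^{k/2}$, expand) is precisely the standard route to this type of estimate.

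There is, however, a genuine gap at your last step, and it is worth spelling out because it would prevent you from reaching the statement as written. Your own bookkeeping gives $\ell(p,q)=k/2-m(\sigma)$; concretely, for $\sigma=pq$ a product of two fixed-point-free involutions the cycle type of $\sigma$ is $\mu\cup\mu$ where $\mu\vdash k/2$ is the coset-type, so $\ell(\sigma)=2\ell(\mu)$ and $m(\sigma)=k/2-\ell(\mu)=|\sigma|/2$. The Neumann series then produces the exponent $-k/2-|\sigma|/2$, not $-k/2-|\sigma|$. Furthermore the signed geodesic count $c(\sigma)$ is \emph{not} a pure sign: the Collins--Matsumoto evaluation gives $c(\sigma)=\prod_i(-1)^{\mu_i-1}C_{\mu_i-1}$, a product of signed Catalan numbers over the parts of the coset-type. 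Already for $k=4$ and coset-type $(2)$ one computes directly $\Wg_N(\sigma)=-1/\bigl(N(N-1)(N+2)\bigr)\sim -N^{-3}$, whereas the lemma as stated would predict $+N^{-4}$. So the telescoping/M\"obius identity you allude to will not collapse $c(\sigma)$ to $\pm 1$, and you would not be able to ``match the claim in the lemma''---the mismatch is in the lemma's stated constant and exponent rather than in your method. What your argument \emph{does} deliver cleanly is $|\Wg_N(\sigma)|=O(N^{-k/2})$, and that is all the paper ever uses downstream.
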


\begin{proof}[Proof of Lemma \ref{le:Ubounded}]
Let $\gamma$ be the number of connected components of $\MM$ and let $k$ be the number of vertices $v \in V(\MM)$ such that $w_v = \HU$. We denote by $V_{\HU}(\MM) = \{ v_1, \ldots, v_k\}$ these vertices and set $\partial v_t = (e_t,f_t)$ for $t \in \INT{k}$.  We set $V_0 (\MM) = V (\MM) \backslash V_{\HU} (\MM)$. By construction $\MM(\cA^N)$ is a weighted sum of product of $k$ entries of $U_N$: $\prod_{t=1}^k (U_N)_{i_{\bar e_t}, i_{\bar f_t}}$, where $\bar e \in E(\MM)$ is the edge $e \in \vec E(\MM)$ belongs to . Also, since  $U_N U_N^{\intercal} = I_N$, at the cost of lowering $k$, we can assume without loss of generality that $\MM$ has no edge connecting $e_t$ and $e_{s}$ or $f_t$ and $f_s$. 

By Proposition \ref{prop:Wg1}, we may write 
$$
\dE_N [ \MM]  =  \frac{1}{N^\gamma}  \sum_{p,q\in \cP_k}\Wg_N (p q^{-1})  \sum_{i  \in \INT{N}^{E(\MM)}}  \delta_{p,q}(i) \prod_{v \in V_0(\MM)} (A^N_{w(v)})_{i_{\partial v}},
$$
where $\delta_{p,q}(i) = 1$ if for all $t \in \INT{k}$, $i_{\bar e_t} = i_{\bar e_{p(t)}} $ and $i_{\bar f_t} = i_{\bar f_{q(t)}} $ and $\delta_{p,q}(i) = 0$ otherwise. In view of Lemma \ref{le:Wg2}, to prove Lemma \ref{le:Ubounded}, it is sufficient to prove that, for any  $(p,q) \in \cP_{k}$,
$$
N^{-\gamma - k/2 - |pq^{-1}|/2} \sum_{i  \in \INT{N}^{E(\MM)}}  \delta_{p,q}(i) \prod_{v \in V_0(\MM)} (A^N_{w(v)})_{i_{\partial v}}  = O(1)
$$

Given $(p,q) \in \cP_{k}$, we  define below an $\cA^N_0$-map, say $\MM_{p,q}$, with vertex set $V_0(\MM)$ such that 
\begin{equation}\label{eq:MMpq}
\sum_{i  \in \INT{N}^{E(\MM)}}  \delta_{p,q}(i) \prod_{v \in V_0(\MM)} (A^N_{w(v)})_{i_{\partial v}} = N^{\alpha(p,q)} \MM_{p,q}(\cA^N_0),
\end{equation}
where $\alpha(p,q) \in \mathbb Z$. By assumption (A1), $|\MM_{p,q}(\cA_0^N)| = O(1)$. Hence the proof will be complete if we manage to check that $\alpha(p,q) \leq k/2 + \gamma + |pq^{-1}|/2$.

This map $\MM_{p,q}$ is defined as follows. Let us say that $e \in \vec E(\MM)$ is of type $0$ if $e \in \partial v$ with $w_v \in \mathcal I_0$ and, otherwise, for $t \in \INT{k}$, we say that $e = e_t$ is of type $\HU^+$ and $e =f_t$ is of type $\HU^-$. In $\MM_{p,q}$ the directed edge set is the subset $\vec E(\MM)$ of type $0$ elements. Next, we say  that $e,f \in E(\MM)$ are neighbors if either they form an edge (that is $\alpha(e) = f$) or, if $e$ and $f$ are of type $\HU^{\pm}$, if either $p(e) = f$ (type $\HU^+$) or if $q(e) = f$ (type $\HU^-$). This defines a graph $G_{p,q}$ with vertex set $\vec E(\MM)$ where vertices of type $0$ have degree $1$ and vertices of type $\HU^{\pm}$ have degree $2$ (recall that we have assumed that there is no  edge in $\MM$ between two type $\HU^+$ neither between two type $\HU^-$). Hence each connected component is either cycle with no type $0$ edge or a segment with exactly $2$ type $0$ edges at its endpoints. In this last case, we put an edge in $\MM_{p,q}$ between these two edges. This defines $\MM_{p,q}$ in \eqref{eq:MMpq}.

From what precedes, the integer $\alpha(p,q)$  in \eqref{eq:MMpq} is equal to $\beta(p,q)$, the number of cycles of $G_{p,q}$, plus $\gamma(p,q)$, the number of connected components of $\MM_{p,q}$.   
We prove below the bound
\begin{equation}\label{eq:boundcpq}
\alpha(p,q)  = \beta(p,q) + \gamma(p,q) \leq \frac{k}{2} + \gamma + \frac{|pq^{-1}|}{2}.
\end{equation}
To this end, let $\Gamma_{p,q}$ be the (multi)graph on $\INT{k}$ whose edges are the pairs of $p$ and the pairs of $q$. Every vertex of $\Gamma_{p,q}$ has degree $2$, so $\Gamma_{p,q}$ is a disjoint union of cycles of even length, say $c(\Gamma_{p,q})$ of them. First, $\ell(pq^{-1}) = 2 c(\Gamma_{p,q})$: on a cycle of $\Gamma_{p,q}$ of length $2\ell$, the permutation $pq^{-1} = pq$ preserves each of the two alternating classes of vertices of this cycle and acts on each of them as a cyclic permutation of order $\ell$. Hence
$$
|pq^{-1}| = k - 2\,c(\Gamma_{p,q}).
$$

\emph{Step 1: the case $q = p$.} Let $C$ be the graph obtained from $\MM$ by merging, for each pair $\{s,t\}$ of $p$, the two vertices $v_s$ and $v_t$ into a single vertex $u_{\{s,t\}}$ of degree $4$, incident to $e_s, f_s, e_t, f_t$. Merging vertices does not increase the number of connected components, so $C$ has at most $\gamma$ connected components. Let $C'$ be obtained from $C$ by splitting each $u_{\{s,t\}}$ into two vertices of degree $2$, one incident to $e_s, e_t$ and the other incident to $f_s, f_t$. Each of these $k/2$ splittings increases the number of connected components by at most one, so that $C'$ has at most $\gamma + k/2$ connected components. On the other hand, the number of connected components of $C'$ is equal to $\beta(p,p) + \gamma(p,p) = \alpha(p,p)$. Indeed, $C'$ is obtained from $G_{p,p}$ by contracting each pairing edge $\{e_s, e_{p(s)}\}$ (resp.\ $\{f_t, f_{p(t)}\}$) to the corresponding split vertex and by identifying the type $0$ directed edges attached to a common vertex of $V_0(\MM)$. The cycle components of $G_{p,p}$ contain no type $0$ directed edge and are mapped bijectively to the components of $C'$ which avoid $V_0(\MM)$, while the segment components of $G_{p,p}$ become the edges of $\MM_{p,p}$, so that the components of $C'$ meeting $V_0(\MM)$ are in bijection with the components of $\MM_{p,p}$. This proves \eqref{eq:boundcpq} for $q = p$.

\emph{Step 2: induction on $|pq^{-1}|$.} Assume $p \ne q$. We pick $s \in \INT{k}$ with $q(s) \ne p(s)$ and let $q' \in \cP_k$ coincide with $q$ except on the two pairs $\{s, q(s)\}$ and $\{p(s), q(p(s))\}$, which are replaced by $\{s, p(s)\}$ and $\{q(s), q(p(s))\}$ (these four elements are pairwise distinct). In $\Gamma_{p,q'}$, the cycle of $\Gamma_{p,q}$ containing $s$ splits into the double edge $\{s, p(s)\}$ and one shorter cycle, the other cycles being unchanged. Hence $c(\Gamma_{p,q'}) = c(\Gamma_{p,q}) + 1$, that is, $|p(q')^{-1}| = |pq^{-1}| - 2$. By construction, $G_{p,q'}$ is obtained from $G_{p,q}$ by a switch of the two pairing edges $\{f_s, f_{q(s)}\}$ and $\{f_{p(s)}, f_{q(p(s))}\}$. We claim that
$$
\alpha(p,q) \le \alpha(p,q') + 1.
$$
Iterating, after $|pq^{-1}|/2$ such steps we reach the pairing $p$, so that
$\alpha(p,q) \le \alpha(p,p) + |pq^{-1}|/2$, which together with Step 1 proves \eqref{eq:boundcpq}.

To prove the claim, we check more generally that a switch of two pairing edges of $G_{p,q}$ changes the quantity $\alpha = \beta + \gamma$ by at most one. Recall that $G_{p,q}$ is a disjoint union of cycles and of segments whose endpoints are the type $0$ directed edges. We distinguish according to the components containing the two switched edges. If they lie in two distinct components, at least one of which is a cycle, the switch merges them into a single component of the same nature as the other one (two cycles give a cycle, a cycle and a segment give a segment with the same endpoints): $\beta$ decreases by one and $\MM_{p,q}$ is unchanged. If they lie in two distinct segments, say $x_0 \cdots x\, y \cdots x_1$ and $z_0 \cdots z\, w \cdots z_1$ with switched edges $\{x,y\}$ and $\{z,w\}$, the switch produces again exactly two segments (for instance $x_0 \cdots x\, z \cdots z_0$ and $x_1 \cdots y\, w \cdots z_1$): $\beta$ is unchanged, while the pairs of endpoints $\{x_0, x_1\}$ and $\{z_0, z_1\}$ are replaced by $\{x_0, z_\veps \}$ and $\{x_1, z_{1-\veps}\}$, $\veps \in \{0,1\}$. In other words, $\MM_{p,q}$ is changed by a switch and its number of connected components changes by at most one. Finally, if the two switched edges lie in the same component, the switch either leaves it connected, in which case it remains a cycle or a segment with the same endpoint, or disconnects it into two cycles, or into a segment with the same endpoints and a cycle: $\beta$ increases by at most one and $\MM_{p,q}$ is unchanged. In every case $\alpha = \beta + \gamma $ changes by at most one, which proves the claim and completes the proof of \eqref{eq:boundcpq}. \end{proof}

We need to introduce yet another map operation. Fix $\veps \in \{1,2\}$ and set $\hat 1 = 2$,  $\hat 2 = 1$. Let $\MM  = (\pi,\alpha,w) \in \cM_{2+q}(\cI)$  and $v \in V(\MM)$ such that $\deg(v) = 2$. Letting $\partial v = (f_1,f_2)$ and $(e_1,e_2)$ being the first two boundary edges of $\MM$, we define $\MM_{v,\veps} = (\pi', \alpha',w')$ as the $\cI$-map in $\cM_{q}(\cI)$ obtained from $\MM$, by wiring $e_{\hat \veps}$  with $f_\veps$  and, if $f_\veps \ne e_\veps$,  $\alpha(f_\veps)$ with $e_\veps$ (that is $\alpha' (e_{\hat \veps}) = f_\veps$ and $\alpha'(\alpha(f_\veps)) = e_\veps$). If $f_\veps = e_\veps$, we create a new connected component composed with a single vertex with a loop edge with color the identity $1 \in \cI$. Finally, recall that if $\sigma \in S_{q}$ and $\MM  \in \cM_{q}$, we denote by $\MM.\sigma \in \cM_q$ the map where the boundary edges are permuted by $\sigma$. 

\begin{lemma}[Schwinger-Dyson equation for Haar orthogonal matrices]\label{le:Haarinv}
For any $\cI$-map $\MM \in \cM_2(\cI)$ and $\veps \in \{1,2\}$, we have
$$ \sum_{v : w_v = \HU,\sigma \in S_2} (-1)^{|\sigma|}   N^{\gamma(v,\sigma)}  \dE_N [(\MM.\sigma)_{v,\veps}]  = 0,$$
where $\gamma(v,\sigma)$  is the number of connected components of $(\MM.\sigma)_{v,\veps}$.
\end{lemma}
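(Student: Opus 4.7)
The plan is to derive the identity from the right-invariance of the Haar measure on $\mathrm{O}(N)$. For any polynomial $F$ in the entries of $U$ and any skew-symmetric $A \in \mathfrak{o}(N)$ one has $\EE[\mathcal{R}_A F(U)]=0$, where $\mathcal{R}_A F(U) := \frac{d}{dt}\big|_{t=0} F(U e^{tA})$. Specialising to $A = E_{pq} - E_{qp}$ (the elementary skew-symmetric matrix) this reads
\begin{equation*}
\sum_{i \in \INT{N}} \EE\bigl[U_{ip}\, \partial_{U_{iq}} F \; - \; U_{iq}\, \partial_{U_{ip}} F\bigr] \;=\; 0
\end{equation*}
for every $p, q$.

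I would apply this identity to $F(U) = \MM(\cA^N)_{a,b}$, which is polynomial in the entries of $U$ with exactly one factor $U_{i_{f_1^v},\, i_{f_2^v}}$ per vertex $v$ with $w_v = \HU$ and $\partial v = (f_1^v, f_2^v)$, summed against the remaining (non-$U$) contribution $F_{\mathrm{oth}}(i)\, \delta_{i_{e_1}, a}\, \delta_{i_{e_2}, b}$. The partial derivative $\partial_{U_{xy}}$ selects each $\HU$-vertex $v$ in turn, imposes $\delta_{i_{f_1^v}, x}\, \delta_{i_{f_2^v}, y}$, and deletes that factor; performing the outer $i$-sum via $\sum_i U_{ip}\, \delta_{i,\, i_{f_1^v}} = U_{i_{f_1^v},\, p}$ and then contracting both sides with $\delta_{a,p}\, \delta_{b,q}$ and summing over $(a,b,p,q)$ yields the scalar identity
\begin{equation*}
\sum_{v : w_v = \HU} \EE\!\left[\sum_i F_{\mathrm{oth}}(i) \Bigl(U_{i_{f_1^v},\, i_{e_1}}\, \delta_{i_{f_2^v},\, i_{e_2}} \; - \; U_{i_{f_1^v},\, i_{e_2}}\, \delta_{i_{f_2^v},\, i_{e_1}}\Bigr)\prod_{v' \neq v} U_{i_{f_1^{v'}},\, i_{f_2^{v'}}}\right] = 0.
\end{equation*}

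The remaining task is combinatorial: to identify the first (resp.\ second) summand inside the expectation with the unnormalised action $N^{\gamma(v,\mathrm{id})} \dE_N[\MM^v]$ (resp.\ $N^{\gamma(v,(12))} \dE_N[(\MM.(12))^v]$). For the first summand, the constraint $\delta_{i_{f_2^v},\, i_{e_2}}$ combined with the original edge $\{f_2^v, \alpha(f_2^v)\}$ of $\MM$ forces $i_{\alpha(f_2^v)} = i_{e_2}$, matching exactly the paper's new wiring $\alpha'(\alpha(f_2^v)) = e_2$; simultaneously the replaced factor $U_{i_{f_1^v},\, i_{e_1}}$ is the original $v$-factor $U_{i_{f_1^v},\, i_{f_2^v}}$ rewritten under the other wiring $\alpha'(e_1) = f_2^v$ (which enforces $i_{f_2^v} = i_{e_1}$), up to relabeling of summed boundary indices. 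The analogue with $e_1, e_2$ swapped gives the second summand, and the sign $(-1)^{|\sigma|}$ emerges automatically from the skew-symmetry of $A$. The main obstacle will be to treat carefully the degenerate cases where $f_2^v$ coincides with $e_1$ or $e_2$: the paper's definition of $(\MM.\sigma)^v$ then introduces an isolated identity-loop connected component, and one must verify that its trivial contribution $\sum_k I_{kk} = N$ is exactly what reproduces the $N^{\gamma(v,\sigma)}$ factor predicted by the collapse of the Kronecker constraints in that case.
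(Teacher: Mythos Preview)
Your proposal is correct and follows essentially the same route as the paper's proof: both exploit right-invariance of Haar measure under $U \mapsto Ue^{tB}$ for anti-symmetric $B$, differentiate at $t=0$, specialise $B$ to an elementary anti-symmetric matrix indexed by the two boundary coordinates, and then sum over those coordinates to obtain the stated scalar identity. Your write-up is more explicit about the index bookkeeping (in particular the identification of each summand with $(\MM.\sigma)^v$ and the degenerate case $f_2^v \in \{e_1,e_2\}$), but there is no substantive difference in strategy.
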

\begin{proof}
Assume $\veps = 2$. Let $B$ be a real anti-symmetric matrix of size $N$. For real $t$, we set $U(t) = U e^{tB} \in \mathrm{O}(N)$ and $\cA^N(t) = \cA_0^N \sqcup \{ U(t) \}$. By invariance of the Haar measure: we have the matrix identity
$$
\dE_N [ \MM ] = \dE [ \MM ( \cA^N) ]= \dE [ \MM ( \cA^N(t)) ].
$$
In particular, the derivative at $0$ of the map $t \mapsto \dE \MM ( \cA^N(t))$ is $0$. We find, for any real anti-symmetric $B$:
\begin{equation}\label{eq:HInv}
    0 = \sum_{v : w_v = \HU} \dE_N [\MM_{v,B}], 
\end{equation}
where for $v \in V(\MM)$ such that $w_v = \HU$,  $\MM_{v,B}$ is obtained from $\MM$ by inserting a new vertex of degree $2$, say $b$, between $v$ and $u_2$, the neighbor of $v$ connected to its second edge of $\partial v$. The first edge of $b$ is connected to $v$. This vertex $b$ is colored with the matrix $B$.
Say that the boundary edges of $\MM$ are $(1,2)$, so that  $\MM_{i_1,i_2}$ are the corresponding matrix entries. In \eqref{eq:HInv}, we consider the matrix $B = e_{i_1} \otimes e_{i_2} - e_{i_2} \otimes e_{i_1}$. We sum over $i_1,i_2$ and get the requested expression.

For $\veps = 1$, the proof is identical but consider instead $ U(t) =  e^{tB}U$ (or use that $U$ and $U^\intercal$ have the same distribution).
\end{proof}

We remark that in the Haar unitary case, a stronger form of Lemma \ref{le:Haarinv} holds, where the sum is only over $v$, not over $\sigma \in S_2$ (but we have to consider the matrix $U_N$ and its complex conjugate $\bar U_N$). Indeed, it suffices to consider in \eqref{eq:HInv} the anti-Hermitian matrices $B = \mathrm{i} e_ {i_1} \otimes e_{i_2} + \mathrm{i} e_{i_2} \otimes e_{i_1}$ and $B = e_{i_1} \otimes e_{i_2} - e_{i_2} \otimes e_{i_1}$ and combine their contributions to cancel $ e_{i_2} \otimes e_{i_1}$.

\begin{proof}[Proof of Proposition \ref{prop:SD2}] 
Assume for example $\veps = 2$. Let $\partial u = (f_1,f_2)$, $\alpha(f_2) = e_1$. Let $\tilde \MM \in \cM_2(\cI)$ with boundary edges $(e_1,f_2)$ be obtained from $\MM$ by removing the edge between $e_1$ and $f_2$. In particular $\MM^{+\HU}  = \tilde \MM^{\CL}$.  We apply Lemma \ref{le:Haarinv} to $\tilde \MM$ such that $\MM^{+\HU}  = \tilde \MM^{\CL}$. By construction, $\tilde \MM_{u,\veps}$ has $\gamma+1$ connected components and $\tilde \MM_{u} (\cA_N) = \MM(\cA_N)$ (the connected component with a single vertex with the identity does not contribute). For an element $v \in V(\MM)$ and $\sigma \in \mathrm{S_2}$,  $(\tilde \MM.\sigma)_{v,\veps}$ will have $\gamma+1$ connected components only if $v$ is in the connected component of $u$ and if the removal of $v$ disconnects this component of $\tilde \MM$. The proposition is then a consequence of Lemma \ref{le:Haarinv} and using, for $\sigma = (1 2) $, the identities  $U_N U_N^* = U_N^* U_N = I_N$ and Lemma \ref{le:Ubounded}.
\end{proof}

\begin{proof}[Proof of Proposition \ref{prop:var2}]
The proof is similar as the one of Proposition \ref{prop:var1} using the Schwinger-Dyson equations for Haar orthogonal matrices instead of the ones for symmetric random tensors. As argued there, it is enough to prove the second claim: $
\dE_N [ \MM ] = \prod_{i=1}^{\gamma} \dE_N [ \MM_i ]  + O ( 1/N )$, where $\MM_1, \ldots, \MM_\gamma$ are the connected components of $\MM$. We do a recursion on the maximal number, say  $t$, of vertices $v \in V(\MM)$ such that $w_v = \HU$ in any connected component of $\MM$. If $t = 0$, there is nothing to prove.

For $ t\geq 1$, assume that we have proved that for all maps with at most $t-1$ $\HU$-vertices in each of its connected components,  we  have $
\dE_N [ \MM ] = \prod_{i=1}^{\gamma} \dE_N [ \MM_i ]  + O ( 1/N )$ (where the constants hidden in the $O(\cdot)$ depend on $\MM$ as usual). Take a map $\MM$ with connected components $(\MM_1,\ldots, \MM_\gamma)$ and assume that $\MM_i$ has at most $t$ $\HU$-vertices for each $i \in \INT{\gamma}$. Let $u \in V(\MM_1)$ with $w_{u} = \HU$ and $\veps \in \{1,2\}$. We apply Proposition \ref{prop:SD2} to $u$. We notice that in each new connected component of $\MM^{\pm}_{u,v,\veps}$ possibly appearing in Proposition \ref{prop:SD2} there are at most $t-1$ $\HU$-vertices, the other components are $\MM_2, \ldots, \MM_{\gamma}$. In each $\MM^{\pm}_{u,v,\veps}$, we repeat the same operation by picking a $\HU$-vertex in $\MM_2$. We continue iteratively with $\MM_3, \ldots, \MM_\gamma$. We find that $\dE_N[\MM]$ is equal to $O(1/N)$ plus  a finite sum of terms $\pm \dE_N [\widetilde \MM ]$ where each $\widetilde \MM$ has at most $t-1$ $\HU$-vertices in each connected component. It remains to use the induction hypothesis. \end{proof}

\subsection{Schwinger-Dyson equations for unitarily invariant families: proofs of Theorem \ref{th:free1} (Gaussian case) and Theorem \ref{th:free3}} \label{subsec:free4}

 For $\veps \in \{0,1\}$,  let $\cI_\veps$ be a finite set and $\ell_\veps : \cI_\veps \to \{1, 2 , \ldots \}$.  We set $\cI = \cI_0 \sqcup \cI_1$ and define $\ell : \cI \to \{1,2,\dots\}$ whose restriction to $\cI_\veps$ is $\ell_\veps$. Let $\cA^N_0 = (A^N_{0,i} )_{i \in \cI_0}$ and $\cA^N_1 = (A^N_{1,i} )_{i \in \cI_1}$ be two finite deterministic families such that for any $i \in \cI$ and $N$, $A^N_{i} \in \cE^N _{\ell(i)}$. We assume that $\cA^N_0 $ and $\cA^N_1$ both satisfy (A1).  Let $U_N \in \mathrm{O}(N)$ be Haar distributed. For ease of notation, we set $\tilde \cA^N_1 =  \cA^N_1 \cdot U_N^{\#}$.  We set $\cA^N = \cA^N_0 \sqcup \tilde \cA^N_1$ and set, as above, for $\MM \in \cM_q(\cI)$ and $f : \cE^N_q \to \dC$ continuous we set
 $
 \dE_N [ f (\MM) ] = \dE [f ( \MM ( \cA^N))]$ where the expectation is with respect to $U_N$ (provided that it is integrable). The next proposition is the Schwinger-Dyson equation for unitarily invariant families. Recall that if $\MM  = (\pi,\alpha,w) \in \cM(\cI)$ and $e \in \vec E(\MM) \cap \partial v$, we set $w_e = w_v$.

\begin{proposition}\label{prop:SD4}
Let $\MM =(\pi,\alpha,w) \in \cM_0(\cI)$ connected and  $e  \in \vec E(\MM)$ be such that $w_e \in \cI_1$,  $w_{\alpha(e)} \notin \cI_1$. We have
$$
\dE_N [\MM] = \sum_{f} \dE_N [\MM_{e,f}^+] -    \sum_{f} \dE_N [\MM_{e,f}^-] + O(\frac 1 N),
$$
where the sums are over all $f \ne e \in \vec E(\MM)$
such that $w_f \in  \cI_1$, $w_{\alpha(f)} \notin \cI_1$  and $\MM_{e,f}^\pm < \MM$ has two connected components. In the first sum, $\MM_{e,f}^+$ is obtained from $\MM$ by the chromatic switch which matches $e$ and $f$ (and $\alpha(e)$ with $\alpha(f)$). In the second sum  $\MM_{e,f}^-$ is obtained from $\MM$ by the chromatic switch which connects $e$ and $\alpha(f)$ (and $\alpha(e)$ with $f$).
\end{proposition}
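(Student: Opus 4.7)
My strategy is to reduce the statement to Corollary \ref{cor:SD2} by ``unfolding'' each rotated tensor. Recall that every $\tilde A = A \cdot U_N^p$ with $A \in \cA_1^N$ of order $p$ can be written as $\ST_p^{\,\prime}(A, U_N, \ldots, U_N)$, where $\ST_p^{\,\prime}$ is an elongated star with $A$ in the center and one copy of $U_N$ on each branch (see \eqref{eq:defTU}). Applying the substitution property (S) at every $\cI_1$-colored vertex of $\MM$, we obtain an expanded $\cI$-map $\MM' = (\pi', \alpha', w')$ on the extended alphabet $\cA_0^N \sqcup \cA_1^N \sqcup \{U_N\}$, with the property that $\MM(\cA^N) = \MM'(\cA_0^N \sqcup \cA_1^N \sqcup \{U_N\})$. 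Concretely, each vertex $v \in V(\MM)$ with $w_v \in \cI_1$ is replaced by the same vertex surrounded by $\deg(v)$ new $U_N$-vertices, one for each leg of $v$; we adopt the convention that the ``inward'' leg of each such $U_N$-vertex is attached to its $\cI_1$-vertex, while the ``outward'' leg inherits the former edge of $\MM$.

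The distinguished edge $e \in E(\MM)$ satisfies $w_e \in \cI_1$ and $w_{\alpha(e)} \notin \cI_1$, so in the expanded picture it becomes a length-two path $v_1 \, \text{---}\, u \,\text{---}\, v_2$ with $v_1$ the original $\cI_1$-vertex, $v_2$ the original non-$\cI_1$-vertex, and $u$ a newly created $U_N$-vertex with $w'_u = \HU$. I apply Corollary \ref{cor:SD2} to $\MM'$ with this $u$ and with $\veps$ chosen to be the outward-facing leg of $u$ (so $w'_{\alpha'(f_\veps)} = w_{v_2} \notin \cI_1$, in particular $\ne \HU$). This yields
\[
\dE_N[\MM'] = \sum_{v} \dE_N[{\MM'}^+_{u,v,\veps}] - \sum_{v} \dE_N[{\MM'}^-_{u,v,\veps}] + O(1/N),
\]
where $v$ ranges over $U_N$-vertices of $\MM'$ distinct from $u$ satisfying $w'_{\alpha'(g_\veps)} \ne \HU$ and producing disconnections.

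The central point is the bijective correspondence between the terms in this sum and the terms in the proposition. A $U_N$-vertex $v$ in $\MM'$ with outward leg not attached to another $U_N$-vertex must itself have been introduced at some edge $f \in E(\MM)$ with $w_f \in \cI_1$ and $w_{\alpha(f)} \notin \cI_1$; conversely, every such $f$ gives exactly one such $v$. $U_N$-vertices arising from an edge of $\MM$ connecting two $\cI_1$-vertices are automatically excluded, because their outward leg is attached to another $U_N$-vertex. For the ``$-$'' switch, the configuration $v_1 \text{---} u \text{---} v_2,\, v_1' \text{---} v \text{---} v_2'$ in $\MM'$ becomes $v_1 \text{---} u \text{---} v_2',\, v_1' \text{---} v \text{---} v_2$, which after collapsing $u$ and $v$ back into the $\tilde\cA_1^N$-labeled vertices is precisely $\MM^-_{e,f}$ in $\MM$. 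For the ``$+$'' switch, $u$ and $v$ become directly connected by an edge joining their outward legs; the final clause of Corollary \ref{cor:SD2} allows me to replace both $U_N$-vertices by the identity, collapsing this $U_N U_N^\intercal = I$ segment to a single edge $v_1 \text{---} v_1'$ (while $v_2, v_2'$ are now joined directly), which is exactly $\MM^+_{e,f}$. Under this dictionary, disconnections in $\MM'$ correspond to disconnections in $\MM$, so the condition ``$\MM^\pm_{e,f} < \MM$ has two connected components'' transfers correctly.

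The main obstacle is the bookkeeping in the $+$ case: one must verify that the collapse prescribed by the last clause of Corollary \ref{cor:SD2} genuinely produces the map $\MM^+_{e,f}$ on the original alphabet $\cA^N$, and that the resulting configuration has the same number of connected components as ${\MM'}^+_{u,v,\veps}$. This is a matter of carefully tracing the permutations $\pi'$ and $\alpha'$ through the switch and then the identity-replacement, using once more the substitution property (S). Once this correspondence is in place, the $O(1/N)$ remainder of Corollary \ref{cor:SD2} transfers verbatim, completing the proof.
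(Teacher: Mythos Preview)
Your approach is correct and is essentially the same as the paper's: expand the rotated $\cI_1$-tensors so that $U_N$-vertices appear explicitly, then invoke Corollary~\ref{cor:SD2}. The only cosmetic difference is that the paper inserts a $U_N$-vertex \emph{only} in the middle of each edge $(f,\alpha(f))$ with $w_f\in\cI_1$, $w_{\alpha(f)}\notin\cI_1$ (pre-cancelling the internal $UU^\intercal=I$ pairs), whereas you put a $U_N$ on every leg of every $\cI_1$-vertex and then use the hypothesis $w'_{\alpha'(g_\veps)}\ne\HU$ of Corollary~\ref{cor:SD2} to rule the internal ones out of the sum; both routes lead to the same bijection $v\leftrightarrow f$ and the same $\pm$ identification after collapsing, so your extra bookkeeping in the $+$ case is exactly the cancellation the paper performs upfront.
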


\begin{proof}
Recall that $\tilde \cA^N_1 = \cA^N_1\cdot U_N^{\#}$. Let $\bar \cA^N_0 = \cA^N_0 \sqcup \cA^N_1$ and $\bar \cA^N = \cA^N_0 \sqcup \{ U_N \}$. We have $\MM(\tilde A^N) = \tilde \MM (\bar \cA_N)$ where $\tilde \MM$ is obtained from $\MM$ by adding a vertex, say $v_f$, of degree $2$ with a mark $\HU$ in the middle of all edges $(f,\alpha(f))$ with $w_f \in \cI_1$ and $w_{\alpha(f)} \notin \cI_1$. The second edge of $v_f$ is matched with $f$ and the first edge of $v_f$ is matched to $\alpha(f)$. We then apply Proposition \ref{prop:SD2} to $\tilde \MM$, $\veps = 2$ and $u = v_e$. 
\end{proof}

\begin{proof}[Proof of Theorem \ref{th:free3}]
The proof of Theorem \ref{th:free3} repeats  the proof of Theorem \ref{th:free2} with Proposition \ref{prop:SD2} replaced by Proposition \ref{prop:SD4}.
\end{proof}

\begin{proof}[Proof of Theorem \ref{th:free1}, Gaussian case]
Theorem \ref{th:free1} in the Gaussian case is a  consequence of Proposition  \ref{prop:SD4} and the fact that if $W^N$ is a Gaussian tensor of order $p \geq 1$, then $W^N$ and $W^N \cdot U_N^p $ have the same distribution, where $U_N \in \mathrm{O}(N)$ is an independent Haar uniform orthogonal matrix. In particular Proposition \ref{prop:SD4} applies to $W^N$ with $\cA^N_1 = \{ W^N \}$. We may then repeat the argument in the proof of Theorem \ref{th:free2} with Proposition \ref{prop:SD2} replaced by Proposition \ref{prop:SD4}.
\end{proof}


\subsection{Proof of Theorem \ref{th:free1b}}

We first note that, as observed in the proof of Theorem \ref{th:free2}, Assumption (A2) is stable by replacing $\cA^N_0$ by any finite collection $(\mathfrak{p}_j (\cA^N_0))_{j \in \mathcal J_0}$ with $\mathfrak{p}_j \in \dC [\cM_{q_j} (\cI_0)]$. Arguing as in the proof of Theorem \ref{th:free2}, it is then enough to check that, in probability, $\lim_{N \to \infty} \MM ( \cA^N) = 0$ for any map $\MM \in \cM_0 (\cI)$ satisfying conditions (P1) and (P2) in the definition of freeness. 

The proof is by comparison. In view of Theorem \ref{th:free1}, it is sufficient to prove that for any $\MM \in \cM_0(\cI)$, we have 
\begin{equation}\label{eq:tbp1}
\left| \dE_N [\MM] - \dE^{\gauss}_N [\MM] \right| = O(\frac 1   N),
\end{equation}
where $\dE^{\gauss}_N $ denotes the expectation with respect to the Gaussian random tensor with the same variance profile.  Indeed, \eqref{eq:tbp1} also implies that for any $\MM \in \cM_0(\cI)$, 
\begin{equation}\label{eq:tbp1var}
\dE_N [ \left|  \MM - \dE_N [\MM] \right|^2 ] =  O(\frac 1   N).
\end{equation}
To see this, the factorization property of Proposition \ref{prop:var1} for $\dE^{\gauss}_N$ and \eqref{eq:tbp1} imply that for any  $\MM \in \cM_0(\cI)$, with connected components $(\MM_1,\ldots, \MM_\gamma)$ we have: 
$$
\dE_N [\MM] = \dE^\gauss_N [\MM]  +  O(\frac 1 { N})= \prod_{i=1}^\gamma \dE^\gauss_N [\MM_i] +  O(\frac 1 {N}) =  \prod_{i=1}^\gamma \dE_N [\MM_i] + O(\frac 1 {N}),
$$
where we have applied Lemma \ref{le:TENSUbounded} and again \eqref{eq:tbp1} at the last step.
Next, the proof of Proposition \ref{prop:var1}  explains why the factorization property implies the vanishing variance property \eqref{eq:tbp1var}.

The remainder of the subsection is devoted to the proof of \eqref{eq:tbp1}. To this end, let $V_{\SC}$ be the subset of vertices of $\MM$ such that $w_v = \SC$. If $\PP$ is a partition of $V_{\SC}$, $\sigma = (\sigma_v)_{v \in V_\SC}$ is a sequence of permutations in $\mathrm{S}_p$ and $i \in \INT{N}^{E(\MM)}$, we define 
$
\delta_{\PP,\sigma} (i)  \in \{0,1\}$ as the indicator function that for all blocks $b = \{ u_1,\ldots,u_l\} \in \PP$, we have, for all $k \in \INT{l}$,
$
i_{\sigma_{u_k}(\partial u_k)} = i_{\sigma_{u_1} (\partial u_1)}.
$
Similarly, we define $\delta_{\PP}(i) = \max_{\sigma} \delta_{\PP,\sigma}(i)$ as the indicator that there exists a sequence of permutations $\sigma = (\sigma_v)_{v \in V_\SC}$ such that $ \delta_{\PP,\sigma}(i) = 1$. Also, we set 
$$
\mu_{\PP}(i) = \prod_{b \in \PP} \dE [ \prod_{k=1}^l X_{i_{\partial u_k}}  ]
$$
Notably, if $\delta_{\PP} (i) = 1$ then $ \mu_{\PP}(i) = \prod_{b \in \PP} \dE [ X^l_{i_{\partial u_1}}  ]$. In the same vein, we define $\delta'_{\PP}(i) \in \{0,1\}$ as the indicator  that $\{ i_{\partial v}\} \ne \{ i_{\partial u}\}$ for all $u ,v$ in different blocks of $\PP$.

Let $\gamma$ be the number of connected components of $\MM$ and $n = |V_{\SC}|$. Up to adding identity elements $I_N \in \cE^N_2$ to $\cA^N_0$ on some edges, we may assume without loss of generality that each connected component of $\MM$ contains at least one element in $\cA^N_0$ and that for all $v \in V_{\SC}$ and $e \in \partial v$, $\alpha (e) \in \partial u$ for some $u \notin V_{\SC}$. Recall that $W^N = X / N^{(p-1)/2}$. In the expression for $\MM$, we decompose the product  $\prod_{v \in V_{\SC}} X_{i_{\partial v}}$ over the distinct set of indices $\{ i_{\partial v}\}_{v \in V_{\SC}}$. We get
\begin{equation}\label{eq:decomp1}
\dE_N [\MM] = \frac{1}{N^{\gamma+\frac{(p-1)n}{2}}}\sum_{\PP} \sum_{ i \in \INT{N}^{ E(\MM)}}  \delta_{\PP}(i) \delta'_{\PP}(i) \mu_{\PP}(i) \prod_{v \in V(\MM) \backslash V_{\SC}} (x_v)_{i_{\partial v}},
\end{equation}
where $x_{v} \in \cA^0_N$ is the corresponding variable and the sum is over all partitions $\PP$ of $V_{\SC}$ with blocks of size at least $2$ (otherwise $\mu_{\PP}(i) = 0$ by the assumption $\dE X = 0$). If $\delta_{\PP} (i) = 1$, let $n_{\PP}(i)$ be the number of sequences of permutations $\sigma = (\sigma_v)_{v \in V_\SC}$ such that $ \delta_{\PP,\sigma}(i) = 1$. We may rewrite \eqref{eq:decomp1} as 
\begin{equation}\label{eq:decomp2}
\dE_N [\MM] = \frac{1}{N^{\gamma+\frac{(p-1)n}{2}}}\sum_{\PP,\sigma} \sum_{ i \in \INT{N}^{ E(\MM)}}  \delta_{\PP,\sigma}(i)\delta'_{\PP}(i) \frac{\mu_{\PP}(i) }{n_{\PP}(i)} \prod_{v \in V(\MM) \backslash V_{\SC}} (x_v)_{i_{\partial v}}.
\end{equation}

Next, if $\hat \MM \in \widehat \cM_0(\cI_0)$ and $i \in \INT{N}^{E(\hat \MM)}$ we set 
$$
\hat \MM(\cA^N_0,i) = \prod_{ v \in V(\hat \MM)} (x_v)_{i_{\partial v}},
$$
where $x_v \in \cA^N_0$ is the corresponding variable. Notably, if $\hat \gamma$ is the number of connected components of $\hat \MM$,
$$
\hat \MM(\cA_0^N) = \frac{1}{N^{\hat \gamma}} \sum_{i \in \INT{N}^{E(\hat \MM)}}\hat \MM(\cA^N_0,i).
$$
With this new notation, we can rewrite  \eqref{eq:decomp2} as 
\begin{equation}\label{eq:decomp3}
\dE_N [\MM] = \frac{1}{N^{\gamma+\frac{(p-1)n}{2}}}\sum_{\PP,\sigma} \sum_{i \in \INT{N}^{E(\MM_{\PP,\sigma})}}  \delta'_{\PP}(\psi(i)) \frac{\mu_{\PP}(\psi(i)) }{n_{\PP}(\psi(i))} \MM_{\PP,\sigma} (\cA^N_0,i),
\end{equation}
where $\MM_{\PP,\sigma} \in \widehat \cM_0(\cI_0)$ is the hyper-map obtained from $\MM$ by forming hyper-edges, for each block $b = \{u_1,\ldots,u_l\}$ of $\PP$, between the elements $\alpha(\sigma_{u_k}(\partial u_k))$, $k\in \INT{l},$ and associated to a variable in $\cA^N_0$ which are forced to coincide by the constraint: $\delta_{\PP,\sigma}(i) = 1$  for all $i$. Finally $\psi :  \INT{N}^{E(\MM_{\PP,\sigma})} \to \INT{N}^{E(\MM)}$ is the application  implied by the equality of some coordinates in the vector $i = (i_e)_{e \in E(\MM)}$.

We now remove the constraint over the set of $i$'s by using the fact that $ \delta'_{\PP}(\psi(i))$, $\mu_{\PP}(\psi(i))$ and $n_{\PP}(\psi(i))$ depend on $i$ only through its $\underset{N}{\sim}$-equivalence class. Such an equivalence class is encoded by a partition $\QQ$ on $E(\MM_{\PP,\sigma})$ whose blocks identify the coordinates of $i = (i_e)_{e \in E(\MM_{\PP,\sigma})}$ which are equal. We denote $i \in \QQ$ if the blocks of $i$ are given by the partition $\QQ$. In other words, we have
$$
 \delta'_{\PP}(\psi(i)) \frac{\mu_{\PP}(\psi(i)) }{n_{\PP}(\psi(i))} = \sum_{\QQ} f_{\PP} (\QQ) \IND ( i \in \QQ),
$$
where the sum runs over all partitions $\QQ$ of $E(\MM_{\PP,\sigma})$ and $f_{\PP} (\QQ)$ is the common value  for all $i \in \QQ$ of $\delta'_{\PP}(\psi(i)) \mu_{\PP}(\psi(i))  / n_{\PP}(\psi(i)) $.
We may now use the Moebius inversion formula on the poset of partitions of $E(\MM_{\PP,\sigma})$. From \cite[Proposition 3.7.1]{zbMATH06016068}, for a given pair $(\PP,\sigma)$, we get 
$$
 \sum_{i \in \INT{N}^{E(\MM_{\PP,\sigma})}}  \delta'_{\PP}(\psi(i)) \frac{\mu_{\PP}(\psi(i)) }{n_{\PP}(\psi(i))} \MM_{\PP,\sigma} (\cA^N_0,i) =  \sum_{\QQ} g_{\PP}(\QQ) \sum_{i \in \INT{N}^{E(\MM_{\PP,\sigma,\QQ})}} \MM_{\PP,\sigma,\QQ}(\cA^0_N,i),
$$
where the sum runs over all partitions $\QQ$ of $E(\MM_{\PP,\sigma})$, $\MM_{\PP,\sigma,\QQ}\in \widehat \cM_0(\cI_0)$ is the hyper-map obtained from $\MM_{\PP,\sigma}$ by gluing the hyper-edges of $\MM_{\PP,\sigma}$ in each block $\QQ$ and $$
g_{\PP}(\QQ) = \sum_{\QQ' \geq \QQ} M_{\PP,\sigma}(\QQ,\QQ') f_{\PP}(\QQ'), 
$$
with $M_{\PP,\sigma}(\QQ,\QQ')$ being the Moebius function of the poset. We thus have checked that \eqref{eq:decomp3} can be finally written as:
\begin{eqnarray}
\dE_N [\MM] & =  &  \frac{1}{N^{\gamma+\frac{(p-1)n}{2}}} \sum_{\PP,\sigma,\QQ} \sum_{i \in \INT{N}^{E(\MM_{\PP,\sigma,\QQ})}}  g_\PP(\QQ) \MM_{\PP,\sigma,\QQ} (\cA^N_0,i) \nonumber \\
& = &  \sum_{\PP,\sigma,\QQ} N^{\gamma(\PP,\sigma,\QQ) - \gamma - \frac{(p-1)n}{2} } g_\PP(\QQ) \MM_{\PP,\sigma,\QQ} (\cA^N_0),\label{eq:gdhede}
\end{eqnarray}
where $\gamma(\PP,\sigma,\QQ)$ is the number of connected components of $\MM_{\PP,\sigma,\QQ}$.

It remains to identify the leading terms in \eqref{eq:gdhede}. First, from assumption $(X1)$, $ |g_\PP(\QQ)| = O(1)$. Also, if $\gamma(\PP,\sigma)$ is the number of connected components of $\MM_{\PP,\sigma}$, we have 
$
\gamma(\PP,\sigma,\QQ) \leq \gamma(\PP,\sigma),
$
since $\MM_{\PP,\sigma,\QQ}$ is obtained from $\MM_{\PP,\sigma}$ by gluing some hyper-edges. We next claim that 
\begin{equation}\label{eq:fcla} \gamma(\PP,\sigma) \leq \gamma + \frac{(p-1)n}{2},
\end{equation}
with equality only if $|\PP|=n/2$, that is, only if $\PP$ is a pairing.

To this end, we compare $\MM_{\PP,\sigma}$ with $\MM$ one block of $\PP$ at a time: at each step we take a new block, say $b=\{u_1,\dots,u_l\}$, remove these vertices and replace them by an hyper-edge of size $l$ joining $\alpha(\sigma_{u_k} (\partial u_k)_j)$, $k\in\INT{l}$. Let $\NN_1$ be the current map before removing block $b$ and $\NN_2$ the one after.
We denote by  $C$ the set of connected components of $\NN_1 \setminus b$ which are incident to $b$ and let
$(c_{k,j})_{k\in\INT{l},\,j \in \INT{p}}$ be the array where $c_{k,j} \in C$ is the component
containing $\alpha(\sigma_{u_k}(\partial u_k)_j)$. In $\NN_1$, the vertex $u_k$ merges the $k$-th
row $\{c_{k,1},\dots,c_{k,p}\}$, while in $\NN_2$ the $j$-th hyper-edge merges the column $\{c_{1,j},\dots,c_{l,j}\}$. In other words, if $\Pi_1$ and
$ \Pi_2$ are the partitions of $C$ generated by the rows and the columns $(c_{k,j})_{k,j}$ respectively, we have $\gamma(\NN_2) - \gamma(\NN_1) = |\Pi_2| - |\Pi_1|$  where $\gamma(\NN_i)$ is the number of connected components of $\NN_i$.

Now, observe that $|\Pi_2| \leq p$ since  the number of connected components in an hypergraph is at most the number of hyper-edges if no vertex is isolated. We deduce that
\[
  \gamma(\NN_2)-\gamma(\NN_1)= |\Pi_2|-|\Pi_1|\le p-1 .
\]
Summing over the $|\PP|$ blocks and using that every block has size at least $2$, we obtain finally
\[
  \gamma(\PP,\sigma)-\gamma \leq  (p-1) |\PP| \leq \frac{(p-1)n}{2} ,
\]
with equality only if $|\PP|=n/2$, that is, only if $\PP \in \cP(V_{\SC})$. This proves \eqref{eq:fcla}.

We may now come back to \eqref{eq:gdhede}. Using assumption (A2) and  \eqref{eq:fcla}, we get 
$$
\dE_N [\MM] = \sum_{\PP \in \cP(V_{\SC}), \sigma , \QQ}  N^{\gamma(\PP,\sigma,\QQ) - \gamma - \frac{(p-1)n}{2} } g_\PP(\QQ) \MM_{\PP,\sigma,\QQ} (\cA^N_0) + O( \frac 1 N).
$$
The final observation is that $g_{\PP}(\QQ)$ is a function of $(f_{\PP}(\QQ'))_{\QQ' \geq \QQ}$ and hence $g_{\PP}(\QQ)$ depends on the distribution of the  $X_i$'s only through the possible values of $\mu_{\PP}(i)$. However, when $\PP \in \cP(V_{\SC})$ is a pairing, 
$ \mu_{\PP}(i) $ depends only on the second moment of the variables $X_i$'s. Since the second moments are identical under $\dE^N$ and $\dE_N^{\gauss}$, we finally get 
$$
\left| \dE_N [\MM] - \dE^{\gauss}_N [\MM] \right| = O( \frac{1}{N}). 
$$
This implies \eqref{eq:tbp1} and concludes the proof of Theorem \ref{th:free1b}. \qed


        

\bibliographystyle{abbrv}
\bibliography{bib}

\end{document}